\newtheorem{assumption}{Assumption}
\numberwithin{equation}{section}
\renewcommand{\paragraph}[1]{\subsubsection{#1}}
\def\R{\mathbb{R}}
\def\Rc{\mathcal{R}}
\def\<{\langle}
\def\>{\rangle}
\def\argmin{{\rm argmin}}
\def\mA{{\sf A}}
\def\mB{{\sf B}}
\def\mF{{\sf F}}
\def\a{{\rm a}}
\newcommand{\triple}{(\rho \alpha\beta)}
\newcommand{\tripleR}{(\rho \alpha\beta)\in \mathcal{R}}
\newcommand{\tripleTau}{(\tau\gamma\delta)}
\newcommand{\tripleIota}{(\tau\iota\chi)}
\definecolor{docol}{rgb}{0, 0.4, 0}
\definecolor{cocol}{rgb}{0.7, 0, 0}
\definecolor{ascol}{rgb}{0, 0, 0.7}
\definecolor{yscol}{HTML}{6622AA}
\def\XXint#1#2#3{{\setbox0=\hbox{$#1{#2#3}{\int}$ }
\vcenter{\hbox{$#2#3$ }}\kern-.6\wd0}}
\title[Dislocations in Multilattices]{Elastic Far-Field Decay from Dislocations in Multilattices}
\author{Derek Olson}
\address{Derek Olson\\
Third Wave Systems\\
6475 City W Pkwy\\
Eden Prairie, MN 55344\\
United States of America 
}
\email{derek.olson@thirdwavesys.com}
\author{Chirstoph Ortner}
\address{Christoph Ortner\\
Department of Mathematics\\
University of British Columbia\\
1984 Mathematics Road\\
Vancouver, British Columbia\\
Canada
}
\email{ortner@math.ubc.ca}
\author{Yangshuai Wang}
\address{Yangshuai Wang\\
Department of Mathematics\\
University of British Columbia\\
1984 Mathematics Road\\
Vancouver, British Columbia\\
Canada
}
\email{yswang2021@math.ubc.ca}
\author{Lei Zhang}
\address{Lei Zhang\\
Institute of Natural Sciences\\
Shanghai Jiao Tong University\\
800 Dongchuan Road\\
Shanghai, 200240\\
China}
\email{lzhang2012@sjtu.edu.cn}
\date{\today}
\thanks{}
\begin{document}

\begin{abstract}
   We precisely and rigorously characterise the decay of elastic fields generated by dislocations in crystalline materials, focusing specifically on the role of multilattices. Concretely, we establish that the elastic field generated by a dislocation in a multilattice can be decomposed into a continuum field predicted by a linearised Cauchy-Born elasticity theory, and a discrete and nonlinear core corrector representing the defect core. We demonstrate both analytically and numerically the consequences of this result for cell size effects in numerical simulations.
\end{abstract}

\maketitle

\section{Introduction}
A key approximation in all numerical simulations of crystalline defects is the boundary condition emulating the crystalline far-field. The ``quality'' of this boundary condition has a significant consequence for the severity of cell-size effects in such simulations. The study of these cell-size effects~\cite{Ehrlacher2013} and the development of new boundary conditions~\cite{2017-bcscrew} begins with a characterisation of the elastic field surrounding the defect core, which was initiated in \cite{Ehrlacher2013}. Such a characterisation is also interesting in other contexts, e.g., in the study of defect interactions~\cite{2014-dislift}. 

In the present work, we extend results concerning the decay of elastic far-fields generated by defects in crystalline materials to {\em dislocations in multilattices} (also referred to as complex crystals). The extension from simple lattices to multilattices is vital for physical applications as it enlarges the pool of admissible materials to include far more materials of interest to materials science. While results of this kind have been known in the materials science community from computational experiments and justified by associated continuum results dating back to Volterra~\cite{volterra}, we fill a gap in the existing literature by producing a rigorous proof of such decay estimates for dislocations in multilattices modeled via classical empirical potentials, i.e., in a discrete and fully nonlinear setting.

The decay estimates for the strain and strain gradients generated by a dislocation in the multilattice setting match those of the simple lattice, which is initially surprising for the following reason:  A fundamental tool that is used to prove decay of the elastic fields in the simple lattice setting is algebraic decay of the residual atomistic forces evaluated at a continuum elasticity predictor displacement.  These residual forces are shown to decay at a rate of $r^{-3}$ in~\cite{Ehrlacher2013} where $r$ is the distance from the dislocation core, but due to reduced symmetry these same forces do not in general decay as $r^{-3}$ in the multilattice setting.  However, we find that in the multilattice setting the {\em net force on a multilattice site} (the sum of the forces on all species of atoms at a single multilattice site) still decays as $r^{-3}$. This observation, along with a strong localisation of multilattice shifts then turns out to be sufficient to prove the expected decay of the strain fields for multilattice dislocations.

Our results can be used to establish convergence of numerical methods for simulating crystal defects including direct atomistic simulations~\cite{Ehrlacher2013} as well as various classes of multiscale methods~\cite{multiBlendingArxiv, buze2021numerical, 2021-qmmm3, chen17}. Indeed, such direct atomistic computational methods date back decades to~\cite{chang1966edge} which investigated computational methods for dislocations in iron (a simple crystal) and later works including~\cite{nandedkar1987atomic, Nandedkar1990} which investigated dislocations in silicon and other diamond cubic lattices (a multilattice). 

% A particularly important application that we have in mind are dislocations in ionic crystals, which provides two significant challenges compared to the setting in \cite{Ehrlacher2013}: multi-lattice structure and long-range (Coulomb) interaction. In the present work, we take the first step, establishing the necessary techniques for multi-lattices, but restrict ourselves to short-range interactions only. In future work, we plan to extend our theory to ionic interactions, which represent a significant additional technical challenge.

%Moreover, for multilattices with high symmetry---specifically single-species 2-lattices---it does remain true that the residual forces decay at $|r|^{-3}$, which would result in a simpler proof.
%

\subsection{Outline}

This paper is organised as follows. We begin our works in Section \ref{sec:notation} by giving a brief overview of the notation used in the paper. We then describe the multilattice structure and the specific material models admitted in our analysis of dislocations in Section \ref{sec:model}. Our main results, the potential energy for straight-line dislocations over an appropriate admissible space of multilattice displacements is well-defined (Theorem \ref{energy_thm}) and the decay of elastic fields (Theorem \ref{decay_thm}), are also presented here. In Section \ref{sec:numerics}, we end with a straightforward consequence of Theorem~\ref{decay_thm}: convergence of a direct atomistic method for a lattice statics simulation of a dislocation in an infinite crystal, followed by two numerical examples of edge dislocation confirming our theoretical predictions. Finally, some concluding remarks are given.  For simplicity of the presentations, we put the detailed proofs and analysis in Section \ref{sec:proof:energy_thm} and Section \ref{sec:proof:decay_thm}.

% The main task accomplished in this technical section is defining the potential energy for straight-line dislocations over an appropriate admissible space of multilattice displacements. We show this potential energy is well-defiend (finite) under a set of physically motivated assumptions and then state our main result concerning decay of the elastic fields generated by the dislocation. 
%   
%   The proof is broken up in a number of separate steps in the following section which closely mimic the results~\cite{Ehrlacher2013} and~\cite{olsonOrtner2016}.  Finally, 
\subsection{Notation}
\label{sec:notation}
The symbol $C$ (or $c$) denotes a generic positive constant that may change from one line
of an estimate to the next. When estimating rates of decay or convergence, $C$
will always remain independent of approximation parameters such as the system size, the configuration of the lattice and the test functions. The dependence of $C$ will be clear from the context or stated explicitly. To further simplify notation we will often write $\lesssim$ to mean $\leq C$.
We use the symbol $\langle\cdot,\cdot\rangle$ to denote an abstract duality
pair between a Banach space and its dual space. The symbol $|\cdot|$ normally
denotes the Euclidean or Frobenius norm, while $\|\cdot\|$ denotes an operator
norm.
For a functional $E \in C^2(X)$, the first and second variations are denoted by
$\<\delta E(u), v\>$ and $\<\delta^2 E(u) v, w\>$ for $u,v,w\in X$, respectively.
The closed ball with radius $r$ and center $x$ is denoted by $B_r(x)$.

\section{Model Problem and Main Results}
\label{sec:model}
\subsection{Kinematics} \label{sec:kinematics}
We begin by letting $\mathcal{L}$ denote a Bravais lattice:
\[
\mathcal{L} = \mB \mathbb{Z}^3, \qquad \mB \in \mathbb{R}^{3 \times 3}_+.
\]
A multilattice, or complex crystal, is obtained by taking a union of shifted lattices,
\[
\mathcal{M} = \bigcup_{\alpha = 0}^{S-1} \mathcal{L} + p^{\rm ref}_\alpha,
\]
where each $p^{\rm ref}_\alpha \in \mathbb{R}^3$ represents a shift vector. The set of atoms in a unit cell are indexed by $\mathcal{S}:=\{0, 1, ..., S-1\}$.
Without loss of generality, we always assume $p^{\rm ref}_0 = 0$.  We refer to $\ell \in \mathcal{L}$ as a \textit{lattice site}, while $\ell+p^{\rm ref}_\alpha$ represents an actual atom location.
$\mathcal{L} + p^{\rm ref}_\alpha$ are the lattice locations for species $\alpha$.
Three common examples ($S=2$) of multilattices are shown in Figure \ref{fig:multi}.
\begin{figure}[!htb]
	\centering 
	\subfigure[B1 (NaCl type) structure]{
		\label{fig:b1}
		\includegraphics[height=3.5cm]{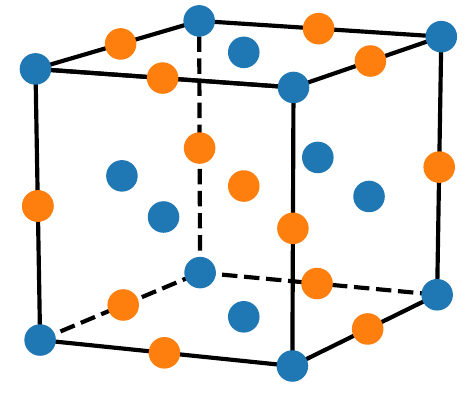}}
	\hspace{0.25cm} 
	\subfigure[B2 (CsCl type) structure]{
		\label{fig:b2} 
		\includegraphics[height=3.5cm]{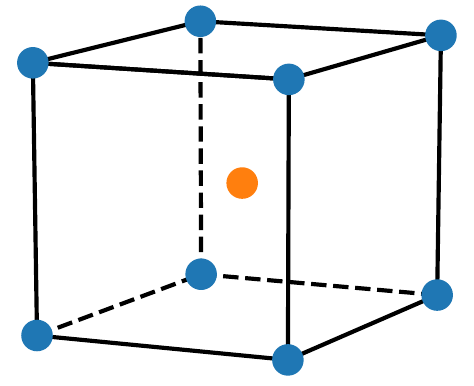}}
	\hspace{0.25cm} 
	\subfigure[B3 (ZnS type) structure]{
		\label{fig:b3}	\includegraphics[height=3.5cm]{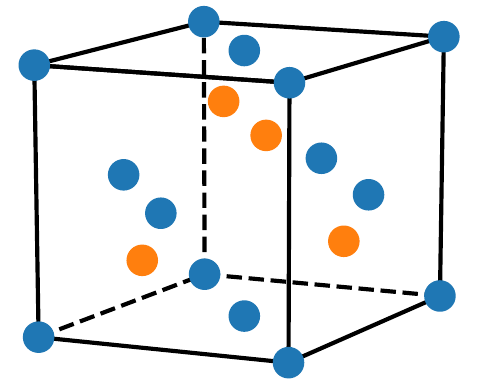}}
	\caption{Examples of multilattice structures.}
	\label{fig:multi}
\end{figure} 

We adopt several equivalent descriptions of the kinematics of the multilattice.  We denote a deformation field by $y_\alpha(\ell) \in \R^3$ and a displacement field by $u_\alpha(\ell) \in \R^3$ for each species $\alpha$.  The argument of both of these fields is a lattice site $\ell \in \mathcal{L}$.  The relationship between the two is
\[
y_\alpha(\ell) = \ell + p^{\rm ref}_\alpha + u_\alpha(\ell).
\]
We collect the set of all deformations (and displacements) at a single lattice site into a tuple 
\[
\bm{y}(\ell) = \big(y_0(\ell), \ldots, y_{S-1}(\ell)\big), \qquad \bm{u}(\ell) = \big(u_0(\ell), \ldots, u_{S-1}(\ell)\big).
\]

We now specify our model to a situation to model straight dislocations, mimicking the setup of~\cite{Ehrlacher2013}. For convenience, we shall assume that the lattice is oriented so that the dislocation direction is parallel to the $x_3$ direction and the Burgers vector is of the form
\[
\mathsf{b} = (\mathsf{b}_1, 0, \mathsf{b}_3) \in \mathcal{L}.
\]
We further assume, without loss of generality, that the displacement fields are independent of the $x_3$-direction and thus only functions of $x_1$ and $x_2$.  We denote the resulting two-dimensional reference lattice by
\[
\Lambda = \mA \mathbb{Z}^2 := \{ (\ell_1, \ell_2): \ell \in \mathcal{L} \}.
\]
%We note that after projecting, the shift vectors may not be in the plane spanned by $x_1$ and $x_2$.  In other words, the projected multilattice, \cco{what is $q^{\rm ref}_\alpha$?; Is this really relevant? You just need $\Lambda$ I think and define $u_\alpha : \Lambda \to \R^3$?} \discdo{it's never used anywhere, but I thought it may potentially be confusing to reader.  Maybe it's just more confusing?} 
%\[
%\mathcal{N} = \bigcup_{\alpha = 0}^{S-1} \Lambda + q^{\rm ref}_\alpha,
%\]
%is not necessarily a planar structure due to the possibility of out-of-plane reference shifts.  Crucially, however, the displacements and shifts only depend on $\ell \in \Lambda$.

Yet another way of describing the multilattice kinematics, motivated by the definition of the multilattice, is by using displacement-shift notation, $(U, \bm{p})$. We emphasize that the shifts ${\bm p}$ in this notation are {\em relative shifts} that are added to the reference shift ${\bm p}^{\rm ref}$. Here, we set
\[
U(\ell) = u_0(\ell), \qquad p_\alpha(\ell) = u_\alpha(\ell) - u_0(\ell), \qquad \bm{p}(\ell) = \big( p_0(\ell), \ldots, p_{S-1}(\ell)\big),
\]
though other choices are available and may in fact be preferable depending on symmetries of the underlying multilattice (see, e.g.,~\cite{koten2013} for one such example).

To describe interactions between atoms, let
\[
\mathcal{R} = \left\{\triple: \rho \in \Lambda, \alpha,\beta \in \mathcal{S}\right\} \setminus \bigcup^{S-1}_{\alpha=0}\{(0\alpha\alpha)\}
\]
be an interaction range allowing us to index pairs of interacting atoms of species $\alpha$ and $\beta$ whose sites are connected by a vector $\rho$, and
\[
D\bm{u}(\ell) := \big(D_{\triple} \bm{u}(\ell)\big)_{(\rho\alpha\beta) \in \mathcal{R}}, \qquad D_{\triple} \bm{u}(\ell) := u_\beta(\ell+\rho) - u_\alpha(\ell)
\]
is the interaction stencil of finite differences needed to compute the energy at site $\ell \in \Lambda$.  We assume $\mathcal{R}$ is finite (this assumption will be justified shortly) and satisfies the conditions
\begin{align}
   & {\rm span}\{ \rho \,|\, (\rho\alpha\alpha) \in \mathcal{R} \} = \R^2 \text{ for all $\alpha \in \mathcal{S}$}, \label{cond1} \\
   & (0\alpha\beta) \in \mathcal{R} \quad \text{for all $\alpha \neq \beta \in \mathcal{S}$ } \label{cond2}.
\end{align}
These two conditions, as well as a further condition encoding slip invariance (see condition~\eqref{cond3}) of the site potential may always be met by enlarging the interaction range if necessary. For future reference, we denote the projection of $\mathcal{R}$ on to the lattice component by
\[
\mathcal{R}_1 = \{\rho \in \Lambda : \triple \in \mathcal{R}\}
\]
and use $r_{\rm cut}$ to denote the cut-off distance for the potential so that if $|\rho| > r_{\rm cut}$, then $\triple \notin \mathcal{R}$.

% \cys{Before introducing the following, shall we define something like $V_\ell(D\bm{u}(\ell)):= \hat{V}(D\bm{y}^{\rm ref}(\ell)+D\bm{u}(\ell))-\hat{V}(D\bm{y}^{\rm ref}(\ell))$ . Otherwise, the deformation $y$ and the reference configuration are not used.}
%We assume that the atomistic energy may be written (formally) as a sum over empirical \textit{site} potentials
%\begin{equation}\label{at_deformation}
%\hat{\mathcal{E}}^{\a}(\bm{y}) := \sum_{\ell \in \Lambda} \hat{V}_\ell\big(D\bm{y}(\ell)\big),
%\end{equation}
%where the sites potentials, $\hat{V}_\ell: (\R^3)^{|\mathcal{R}|} \rightarrow \R$ are assumed to satisfy:
%\begin{assumption}
%There exists $R_{\rm def}>0$ such that for all $|\ell|\geq R_{\rm def}$, $\hat{V}_{\ell} \equiv \hat{V}$ does not depend on $\ell$.
%\end{assumption}

\subsection{Energy} \label{sec:energy}
Having described the multilattice kinematic variables, we now describe the basic assumptions on the potential energy that we require for our analysis.  These assumptions will be fairly general so as to allow wide-ranging applicability to any classical interatomic potential including in particular general many-body interaction. 
We assume that the site energy potential $V: (\mathbb{R}^3)^{|\mathcal{R}|} \to \mathbb{R}$ is four times continuously differentiable with uniformly bounded derivatives; that is, there exists $M>0$ such that $|\partial_\gamma V| \leq M$ for any multi-index $|\gamma| \leq 4$.  We also assume $V(\bm 0) = 0$ (that is, V describes in fact the energy difference from the reference lattice).

For the atomistic energy functional to be well-defined (i.e., finite), we will consider an energy difference functional defined on displacements $\bm{u}$,
\begin{equation}\label{at_u}
\mathcal{E}^{\rm a}(\bm{u}) := \sum_{\ell \in \Lambda} \Big( V\big(D\bm{u}^0(\ell) + D\bm{u}(\ell)\big) - V\big(D\bm{u}^0(\ell)\big)\Big) =: \sum_{\ell \in \Lambda} V_{\ell}\big(D\bm{u}(\ell)\big)
\end{equation}
where $V_{\ell}({\bf g}) := V\big( D{\bm u}^0(\ell) + {\bf g}\big) - V\big( D {\bm u}^0(\ell)\big)$, for ${\bf g} \in (\mathbb{R}^3)^{|\mathcal{R}|}$, with $\bm{u}^0$ representing a predictor displacement derived from solving a linear elastic model of a dislocation. (This predictor will be defined in~\eqref{cond3}).

An auxiliary energy functional needed in the following analysis is the energy of the homogeneous (defect-free) lattice 
\begin{equation}\label{energy_hom}
\mathcal{E}^{\a}_{\rm hom}(\bm{u}) = \sum_{\ell \in \Lambda} V\big(D\bm{u}(\ell)\big) .
\end{equation}

%Having introduced the site potential for the defect and defect-free lattice, we briefly pause to introduce notation for the derivatives of the site potential.  As arguments of the site potentials $V$ and $V_\ell$ are finite differences, $D_{\triple} \bm{u}$, for $\triple \in \mathcal{R}$, we will write derivatives of the site potentials with respect to each of these arguments as follows. 
Arguments of the site potentials are indexed by $(\rho\alpha\beta) \in \mathcal{R}$.
Given $(\rho\alpha\beta), (\tau\gamma\delta) \in \mathcal{R}$, $\ell \in \Lambda$ and $\bm{g} = (g_{\triple})_{\triple \in \mathcal{R}} \in (\mathbb{R}^3)^{\mathcal{R}}$, we then denote the first and second partial derivatives by 
\begin{align*}
[V_{\ell,\triple}(\bm{g})]_{i} :=~& \frac{\partial V_\ell(\bm{g})}{\partial g_{\triple}^i }, \quad i = 1,2,3, \\
V_{\ell,\triple}(\bm{g}) :=~& \frac{\partial V_\ell(\bm{g})}{\partial g_{\triple}}, \\
[V_{\ell,\triple\tripleTau}(\bm{g})]_{ij} :=~& \frac{\partial^2 V_\ell(\bm{g})}{\partial g_{\tripleTau}^j \partial g_{\triple}^i},  \quad i,j = 1,2,3, \\
V_{\ell,\triple\tripleTau}(\bm{g}) :=~& \frac{\partial^2 V_\ell(\bm{g})}{\partial g_{\tripleTau} \partial g_{\triple}},
\end{align*}
with higher order derivatives defined analogously. 
%Throughout this paper, the site potential is additionally assumed to satisfy the following differentiability assumption:

%\begin{assumption}
%Each $V_\ell: \big(\mathbb{R}^3\big)^{|\mathcal{R}|} \to \mathbb{R}$ is four times continuously differentiable with uniformly bounded derivatives; that is, there exists $M>0$ such that $|\partial_\gamma V_\ell| \leq M$ for any multi-index $|\gamma| \leq 4$. 
%%and so on with analogous definitions for $V(\cdot)$ except that we will drop the usage of the comma as a subscript when writing derivatives of $V(\cdot)$.
%\end{assumption}

The discrete function space over which $\mathcal{E}^\a_{\rm hom}(\bm{u})$ and $\mathcal{E}^\a(\bm{u})$ will be defined is a quotient space of multilattice displacements whose (semi-)norm,
\[
\| \bm{u}\|_{\a_1}^2 := \sum_{\ell \in \Lambda} |D\bm{u}(\ell)|_\mathcal{R}^2,
 \qquad \text{where} \quad |D\bm{u}|_{\mathcal{R}}^2 := \sum_{\triple \in \mathcal{R}} |D_{\triple} \bm{u}(\ell)|^2,
\]
is finite.  Because this is only a semi-norm, we form the quotient with the kernel of the semi-norm, which is the set of constant multilattice displacements, to obtain the spaces
\begin{align*}
   \bm{\mathcal{U}} := \mathcal{U}/ \mathbb{R}^3, \qquad \text{where}
   \quad \mathcal{U} := \left\{\bm{u}:\mathcal{L}^\mathcal{S}  \to \mathbb{R}^3, \|\bm{u}\|_{\a_1} < \infty   \right\}.
\end{align*}
The elements of $\bm{\mathcal{U}}$ can be understood as equivalence classes, 
\begin{align*}
\bm{\mathcal{U}} = \{ \{(u_{\alpha}+c)_{\alpha\in\mathcal{S}}:c\in\R^3\}:u\in \mathcal{U}\}.
\end{align*}

It is then proven in~\cite{olsonOrtner2016} that $\bm{\mathcal{U}}_0$, defined by
\begin{align*}
\mathcal{U}_0 :=~& \left\{\bm{u} \in \mathcal{U} : Du_0, u_\alpha - u_0 \,
               \mbox{have compact support for each $\alpha \in \mathcal{S}$} \right\}, \\
\bm{\mathcal{U}}_0 :=~& \mathcal{U}_0 / \mathbb{R}^3,
\end{align*}
is dense in $\bm{\mathcal{U}}$, which is used to establish the following Theorem. 

\begin{theorem}[Olson and Ortner 2017 \cite{olsonOrtner2016}]\label{well_defined}
If the homogeneous multilattice reference configuration, $\bm{u} = \bm{0}$, is an equilibrium of the defect free energy, that is, 
\begin{equation}\label{ostrich1}
\sum_{\ell \in \Lambda} \sum_{\triple \in \mathcal{R}} V_{, \triple}(\bm{0}) \cdot D_{(\rho\alpha\beta)}\bm{v}(\ell) = 0, \quad \forall \, \bm{v} \in \bm{\mathcal{U}}_0,
\end{equation}
and if the homogeneous site potential is $C^4$ with uniformly bounded derivatives, then the energy functional, $\mathcal{E}^\a_{\rm hom}(\bm{u})$, is well-defined and $C^3$ (three times continuously Frechet
differentiable) on $\bm{\mathcal{U}}$.
\end{theorem}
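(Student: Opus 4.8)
The plan is to follow the strategy used in~\cite{Ehrlacher2013} for simple lattices; the only genuinely new ingredient is the bookkeeping for the multilattice stencil $D_{\triple}\bm{u}(\ell) = u_\beta(\ell+\rho) - u_\alpha(\ell)$. The essential difficulty is that the naive sum $\sum_{\ell} V(D\bm{u}(\ell))$ does \emph{not} converge absolutely: the summands do not decay, since $V(\bm 0)\neq 0$ in general and since the linear part $\sum_{\triple} V_{\triple}(\bm 0)\cdot D_{\triple}\bm{u}(\ell)$ is only square-summable in $\ell$ after testing, never summable. One therefore has to \emph{renormalise}. Because $D\bm{u}$, and hence every summand, is unchanged when a constant displacement is added to $\bm{u}$, the functional descends to the quotient $\bm{\mathcal{U}}$. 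For $\bm{u}\in\bm{\mathcal{U}}_0$ the stencil $D\bm{u}$ has compact support (since $Du_0$ and the shifts $u_\alpha-u_0$ do), so $\sum_\ell\big(V(D\bm{u}(\ell))-V(\bm 0)\big)$ is a finite sum, and the equilibrium condition~\eqref{ostrich1} allows the linear term to be subtracted for free:
\begin{equation*}
   \mathcal{E}^{\a}_{\rm hom}(\bm{u}) \;=\; \sum_{\ell\in\Lambda}\Big( V(D\bm{u}(\ell)) - V(\bm 0) - \sum_{\triple\in\mathcal{R}} V_{\triple}(\bm 0)\cdot D_{\triple}\bm{u}(\ell)\Big), \qquad \bm{u}\in\bm{\mathcal{U}}_0 .
\end{equation*}
The right-hand side is taken as the \emph{definition} of $\mathcal{E}^{\a}_{\rm hom}$ on all of $\bm{\mathcal{U}}$.

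Second, one shows this series converges absolutely. Since $V\in\CC^4$ with uniformly bounded derivatives, the second-order Taylor expansion about $\bm 0$ yields a constant $C$, depending only on $\|\nabla^2 V\|_{L^\infty}$ and $|\mathcal{R}|$, with $\big|V(\bm g)-V(\bm 0)-\sum_{\triple}V_{\triple}(\bm 0)\cdot g_{\triple}\big|\le C|\bm g|_{\mathcal{R}}^2$ for every stencil $\bm g=(g_{\triple})_{\triple\in\mathcal{R}}$. Applying this with $\bm g = D\bm{u}(\ell)$ and summing gives $|\mathcal{E}^{\a}_{\rm hom}(\bm{u})|\le C\sum_{\ell}|D\bm{u}(\ell)|_{\mathcal{R}}^2 = C\|\bm{u}\|_{\a_1}^2<\infty$, so $\mathcal{E}^{\a}_{\rm hom}$ is finite on $\bm{\mathcal{U}}$ and agrees with the renormalised naive definition on the dense subspace $\bm{\mathcal{U}}_0$; density of $\bm{\mathcal{U}}_0$ (from~\cite{olsonOrtner2016}) is exactly what identifies this extension as the correct functional.

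Third, for $\CC^3$ regularity one differentiates formally under the sum, producing the candidate variations
\begin{align*}
   \langle\delta\mathcal{E}^{\a}_{\rm hom}(\bm{u}),\bm{v}\rangle \;&=\; \sum_{\ell}\sum_{\triple}\big(V_{\triple}(D\bm{u}(\ell))-V_{\triple}(\bm 0)\big)\cdot D_{\triple}\bm{v}(\ell), \\
   \langle\delta^2\mathcal{E}^{\a}_{\rm hom}(\bm{u})\bm{v},\bm{w}\rangle \;&=\; \sum_{\ell}\sum_{\triple,\tripleTau} D_{\triple}\bm{v}(\ell)\cdot V_{\triple\tripleTau}(D\bm{u}(\ell))\,D_{\tripleTau}\bm{w}(\ell),
\end{align*}
and analogously for $\delta^3\mathcal{E}^{\a}_{\rm hom}$. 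The mean-value bound $|V_{\triple}(\bm g)-V_{\triple}(\bm 0)|\le C|\bm g|_{\mathcal{R}}$ together with Cauchy--Schwarz shows $\delta\mathcal{E}^{\a}_{\rm hom}(\bm{u})$ is a bounded linear functional; uniform bounds on $\nabla^2 V$ and $\nabla^3 V$ combined with H\"older's inequality in $\ell^2(\Lambda)$ — using the pointwise estimate $|D\bm{u}(\ell)|_{\mathcal{R}}\le\|\bm{u}\|_{\a_1}$, i.e.\ $\|\cdot\|_{\ell^\infty}\le\|\cdot\|_{\ell^2}$ — bound the bilinear and trilinear forms $\delta^2\mathcal{E}^{\a}_{\rm hom}$ and $\delta^3\mathcal{E}^{\a}_{\rm hom}$ uniformly on $\bm{\mathcal{U}}$. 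That these formal series are the genuine Fr\'echet derivatives follows by expanding $V$, $V_{\triple}$, $V_{\triple\tripleTau}$ to first order about $D\bm{u}(\ell)$ with integral remainder and estimating each remainder, which is quadratic in the increment, by the respective bound on $\nabla^2 V$, $\nabla^3 V$, $\nabla^4 V$. Finally, Lipschitz continuity of $\nabla^3 V$ (from $\|\nabla^4 V\|_{L^\infty}<\infty$) and the same H\"older argument give $\|\delta^3\mathcal{E}^{\a}_{\rm hom}(\bm{u})-\delta^3\mathcal{E}^{\a}_{\rm hom}(\bm{u}')\|\le C\|\bm{u}-\bm{u}'\|_{\a_1}$, so the third derivative is (Lipschitz) continuous and $\mathcal{E}^{\a}_{\rm hom}\in\CC^3(\bm{\mathcal{U}})$.

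The one non-mechanical point is the first stage: recognising that the energy must be renormalised and that hypothesis~\eqref{ostrich1} is precisely what annihilates the non-$\ell^1$ linear part, then invoking density of $\bm{\mathcal{U}}_0$ to pin down the extension. Everything afterwards is the same sequence of global Taylor expansions and $\ell^2$--H\"older estimates as in the simple-lattice argument; the multilattice structure enters only through the form of $D_{\triple}\bm{u}(\ell)$, which still obeys $|D\bm{u}(\ell)|_{\mathcal{R}}\le\|\bm{u}\|_{\a_1}$ for each $\ell$, so no new analytic obstacle appears.
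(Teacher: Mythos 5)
Your proposal is correct and follows essentially the same route the paper relies on: this theorem is quoted from \cite{olsonOrtner2016}, and the argument there (mirrored in this paper's proof of Theorem~\ref{energy_thm}) is exactly your renormalisation — subtract the constant and, via the equilibrium condition~\eqref{ostrich1}, the linear term — followed by Taylor/Cauchy--Schwarz bounds with uniformly bounded derivatives of $V$ and extension from the dense subspace $\bm{\mathcal{U}}_0$. No substantive difference from the paper's approach.
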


\medskip

%%end regularity borrow

A further assumption we require on the site potential and multilattice geometry is that the defect-free multilattice is a stable equilibrium of the defect free energy. This is simply a convenient formalization of the usual phonon-stability assumption; see~\cite{olsonOrtner2016,weinan2007cauchy} for an in-depth discussion.  Mathematically, this amounts to a discrete ellipticity condition and in particular allows us to leverage of Green's function estimates for elliptic equations.

\begin{assumption}\label{coercive}
The homogeneous multilattice reference configuration, $\bm{u} = \bm{0}$,
   is a stable equilibrium of $\mathcal{E}^\a_{\rm hom}$ in the sense that there exists $\gamma_\a > 0$
   such that
   \begin{equation} \label{eq:stab-hom}
      \<\delta^2\mathcal{E}^\a_{\rm hom}(\bm{0})\bm{v},\bm{v}\> \geq~ \gamma_\a\|\bm{v}\|_{\a_1}^2 , \quad \forall \, \bm{v} \in \bm{\mathcal{U}}.
   \end{equation}
\end{assumption}

\medskip 

\subsection{Slip Invariance and Admissible Function Space}
\label{sec:slip_space}
We are now in a position to rigorously define the dislocation energy~\eqref{at_u} and the admissible space of displacements over which it is defined.  To that end, we must (1) encode slip invariance of the atomistic energy into the site potential $V$, (2) define the admissible function space as a proper subset of $\bm{\mathcal{U}}$ allowing us to employ a finite interaction range in the reference configuration, and (3) define the predictor displacement $\bm{u}^0$ utilized in~\eqref{at_u}.

We assume a dislocation core position $\hat{x} = (\hat{x}_1, \hat{x}_2) \notin \Lambda$ and assume the dislocation branch cut is given by
\[
\Gamma = \left\{(x_1,x_2) \in \mathbb{R}^2 : x_1 \geq \hat{x}_1, x_2=\hat{x}_2 \right\},
\]
which is consistent with our assumption that the Burgers vector is parallel to
the $x_1$-direction.

In order to model dislocations the homogeneous site potential $V$ must be invariant under lattice slip. Following \cite{Ehrlacher2013} we define the slip operator $S_0$ acting on the multilattice displacements $w_\alpha: \Lambda \rightarrow \mathbb{R}^3$, for $\alpha \in \mathcal{S}$, by 
\[
(S_0w)_\alpha(x) := \begin{cases}
w_\alpha(x), \qquad \qquad \qquad &x_2 > \hat{x}_2, \\
w_\alpha(x-\mathsf{b}_{12}) - \mathsf{b}, \qquad &x_2 < \hat{x}_2,
\end{cases}
\]
where $\mathsf{b}_{12} = ({\sf b}_1, 0)$ is the projection of the Burger's vector to the $(x_1,x_2)$ plane; then we have
\begin{align}\label{cond3}
V(DS_0w(\ell)) = V(Dw(\ell)), \qquad \forall~\ell_2 > \hat{x}_2, \nonumber \\
V(DS_0w(\ell+\mathsf{b}_{12})) = V(Dw(\ell)), \qquad \forall~\ell_2 < \hat{x}_2.
\end{align}
This condition ensures that the energy of the lattice remains invariant under crystallographic slip by a lattice vector.  However, as noted in~\cite{Ehrlacher2013}, in order for this condition to not invalidate our assumption of a finite atomistic interaction range, we must define the admissible displacement space for the dislocation energy~\eqref{at_u} as a proper subset of $\bm{\mathcal{U}}$.

% \cys{the following (interpolation I) seems slowing down the reading, shall we try to simplify it?}
Before introducing that, it is useful to make a connection between the discrete, atomistic objects and the continuous objects. For $u:\mathcal{L} \rightarrow \R^3$, we then denote the continuous interpolant of $u$ with respect to $\mathcal{T}_{\rm a}$ by $Iu$, where $\mathcal{T}_{\rm a}$ is a simplicial decomposition of $\mathcal{L}$ \cite{Ehrlacher2013, olsonOrtner2016} whose vertices are exactly the lattice sites $\ell \in \Lambda$ and we simply take $I$ to be the standard $\mathcal{P}_1$ interpolant. We will also write $I\bm{u} = (I u_{\alpha})_{\alpha\in\mathcal{S}}$.

We can then choose a global bound, $m_{\mathcal{A}}$, for $\|\nabla I u_\alpha\|_{L^\infty}, \|I p_\alpha\|_{L^\infty}$ and a radius $r_{\mathcal{A}}$ large enough so that
\begin{equation}\label{admissible}
\begin{split}
\mathcal{A} := \big\{\bm{u} \in \bm{\mathcal{U}} : \|\nabla Iu_\alpha \|_{L^\infty}, \|I p_\alpha \|_{L^\infty} < m_{\mathcal{A}}, \, \mbox{and} \, |\nabla Iu_\alpha(x) |, |Ip_\alpha(x) | < 1/2, \, x > &r_{\mathcal{A}}, \, \forall \, \alpha \in \mathcal{S}    \big\},
\end{split}
\end{equation}
contains the possible minimizers of the dislocation energy.  Arguing as in~\cite[Appendix B]{Ehrlacher2013}, the key idea here is that the finite energy criterion on $\| \cdot \|_{\a_1}$ in the definition of $\bm{\mathcal{U}}$ implies $\nabla I u_\alpha \to 0$ as $|x| \to \infty$ and similarly for $Ip_\alpha \to 0$.  Thus, $m_{\mathcal{A}}$ and $r_{\mathcal{A}}$ may always be chosen large enough so that a \textit{particular} local minimum of the dislocation energy belongs to $\mathcal{A}$.  But then we may always increase these parameters so that all elements of $\bm{\mathcal{U}}$ within some ball of arbitrary radius about this minimum are also contained in $\mathcal{A}$, which permits us to perform our local calculus arguments. Full details may be found in~\cite[Appendix B]{Ehrlacher2013}.

We may then formulate the slip invariance condition by defining a mapping $S$, where $S$ is an $\ell^2$-orthogonal operator with dual $R=S^{*}=S^{-1}$ \cite{Ehrlacher2013}, of both lattice and multilattice functions by $(S\bm{u})_\alpha(\ell) = Su_\alpha(\ell)$, where
\[
Su(\ell) := \begin{cases} u(\ell), \qquad\qquad &\ell_2 > \hat{x}_2 \\
u(\ell-\mathsf{b}_{12}), \qquad &\ell_2 < \hat{x}_2
\end{cases},  \qquad Ru(\ell) = \begin{cases} u(\ell), \qquad \qquad &\ell_2 > \hat{x}_2 \\
u(\ell+\mathsf{b}_{12}), \qquad &\ell_2 < \hat{x}_2
\end{cases}. \\
\]
Furthermore, for $\bm{\omega} = \bm{u}^0 + \bm{u}, \bm{u}\in\mathcal{A}$, we shall write $S_0 \bm{\omega} = S_0 \bm{u}^0 + S \bm{u}$. 
The slip invariance condition can now be expressed (using the same notation as~\cite{Ehrlacher2013}) as
\begin{equation}\label{slip}
V\big(D(\bm{u}^0 + \bm{u})(\ell)\big) = V\big(RDS_0(\bm{u}^0 + \bm{u})(\ell)\big), \qquad \forall \ell \in \Lambda, \bm{u} \in \mathcal{A}.
\end{equation}

In our analysis we require that applying the slip operator to the predictor map $\bm{u}^0$ yields a smooth function in the half-space.
\[
\Omega_\Gamma = \{(x_1, x_2) \in \mathbb{R}^2 : x_1 \geq \hat{x}_1 \} \setminus B_{\hat{r}+\mathsf{b}_{1}}(\hat{x}),
\]
where the dislocation core radius $\hat{r}$ is defined in Lemma~\ref{ml_thm_vs}. It is therefore natural to define (analogously as in \cite{Ehrlacher2013}) the \textit{elastic strains}
\begin{equation}\label{ml_strain}
e(\ell) := \big(e_{\triple}(\ell)\big)_{\triple \in \mathcal{R}}, \qquad e_{\triple}(\ell) = \begin{cases} RD_{\triple}S_0\bm{u}^0(\ell), \qquad &\ell \in \Omega_{\Gamma} \\
D_{\triple}\bm{u}^0(\ell), \qquad  \quad \quad &\ell \notin \Omega_\Gamma
\end{cases},
\end{equation}
and the analogous definition for corrector $\bm{u}$
\begin{equation}\label{fancy_D}
\tilde{D}\bm{u}(\ell) := \big(\tilde{D}_{\triple}\bm{u}(\ell)\big)_{\triple \in \mathcal{R}}, \qquad \tilde{D}_{\triple}\bm{u}(\ell) = \begin{cases} RD_{\triple}S\bm{u}(\ell), \qquad &\ell \in \Omega_{\Gamma} \\
D_{\triple}\bm{u}(\ell) \qquad \quad \quad &\ell \notin \Omega_\Gamma
\end{cases}.
\end{equation}
Using this notation, the slip invariance condition~\eqref{slip} may be written as
\begin{equation}\label{slip_strain}
V\big(D(\bm{u}^0 + \bm{u})(\ell)\big) = V\big(e(\ell) + \tilde{D}\bm{u}(\ell)\big).
\end{equation}
Moreover, we have the identity
\begin{equation}\label{slip_fd_id}
\tilde{D}_{\triple} \bm{u}(\ell) = \tilde{D}_\rho u_0(\ell) + \tilde{D}_\rho p_\beta(\ell) + p_\beta(\ell) - p_\alpha(\ell),
\end{equation}
where $\tilde{D}_{\rho}$ is defined in \eqref{eq:tildeDrho} later. It is intuitive, but can be proven via tedious algebraic manipulations and considering the cases {\bfseries{(1)}} $\ell \notin \Omega_\Gamma$ and {\bfseries{(2)}} $\ell \in \Omega_\Gamma$.  We have included these manipulations in Appendix~\ref{tedious}.

\subsection{Continuum-Elasticity Dislocation Predictor}
After reviewing the well-posedness of the defect-free energy, $\mathcal{E}^\a_{\rm hom}$, in Theorem~\ref{well_defined} and describing the fundamental assumptions on the energy (smoothness of the site potential and the coercivity condition of Assumption~\ref{coercive}) and the slip invariance condition on the site potential (condition~\eqref{slip_strain}), we are now in a position to complete the definition of the dislocation defect energy first alluded to in~\eqref{at_u}.  Specifically, it remains to define the predictor $\bm{u}^0 = (U^0, \bm{p}^0)$ utilized in~\eqref{at_u}.

As is done in the simple lattice case~\cite{Ehrlacher2013}, this will be accomplished by a slight modification of the solution of a continuum linearized elasticity equation, where the elasticity tensor is taken from the linearized Cauchy--Born model~\cite{cauchy,born1954}.  Thus, we shall define $(U^{\rm lin})$ by
\begin{equation}\label{lin_elastic}
\begin{split}
\nabla \cdot (\mathbb{C} \nabla U^{\rm lin}) =~& 0, \\
U^{\rm lin}(x+) - U^{\rm lin}(x-) =~& -\mathsf{b}, \quad \mbox{on $\Gamma$,} \\
\nabla_{e_2} U^{\rm lin}(x+) - \nabla_{e_2} U^{\rm lin}(x-) =~& 0, \qquad \mbox{on $\Gamma$,}
%\bm{p}^{\rm lin} =~& \argmin_{\bm{p}} V\Big(\big(\nabla_\rho U^{\rm lin} + p_\beta - p_\alpha\big)_{\triple \in \mathcal{R}}\Big).
\end{split}
\end{equation}
where $\mathbb{C}$ is the linearized Cauchy-Born tensor for a multilattice defined by
\begin{equation}\label{elastic_tensor}
\begin{split}
W(\mF, \bm{p}) =~& V\Big(\big(\mF \rho + p_\beta - p_\alpha\big)_{\triple \in \mathcal{R}}\Big) \\
\mathbb{C}_{ijkl} =~& \frac{\partial^2}{\partial \mF_{ij}\partial \mF_{kl}} \min_{\bm{p}} W(\mF, \bm{p}).
\end{split}
\end{equation}

It was shown in~\cite[Equation 3.11]{olsonOrtner2016} that Assumption~\ref{coercive} implies that the linearized Cauchy--Born tensor satisfies a Legendre-Hadamard condition, and therefore the first set of three equations in~\eqref{lin_elastic} is solvable by the classical techniques of Hirth and Lothe~\cite{hirth1982theory}. As we are working with a multilattice, we must then specify also the corresponding {\em relative shifts}, $\bm{p}^{\rm lin}$.  In the Cauchy--Born theory for multilattices, the shift fields are obtained by minimization of the energy density~\cite{born1954}:
\begin{equation}\label{p_min}
\bm{p} = {\argmin} ~~W\big(\nabla U^{\rm lin}, \cdot\big)
% \bm{p} = \underset{\bm{p}}{\argmin} ~~V\Big(\big(\nabla_\rho U^{\rm lin} + p_\beta - p_\alpha\big)_{\triple \in \mathcal{R}}\Big),
\end{equation}
but instead we define $\bm{p}^{\rm lin}$ by minimizing the quadratic expansion of $W$ which we will show is sufficient to maintain the accuracy as the linear elasticity model,
\begin{equation}\label{p_lin_elastic}
\frac{\partial^2 W( {\sf 0}, {\bm 0})}{\partial \bm{p} \partial \bm{p} } (\bm{p}^{\rm lin} - \nabla U^{\rm lin} \bm{p}^{\rm ref}) = -\frac{\partial^2 W({\sf 0}, {\bm 0})}{\partial\bm{p} \partial\mF} \nabla U^{\rm lin}. 
\end{equation}
In the far-field our definition of the shift can be thought of as a leading order correction to a reasonable first guess ${\bm p} = \nabla U^{\rm lin} \bm{p}^{\rm ref}$. 

% In practice, this 
% This construction can be understood as the minimization of a quadratic expansion of $W$, or equivia
% which is equivalent to performing one step of Newton's method to the minimization problem~\eqref{p_min} where the initial shift fields are taken to be the reference shifts in the perfect multilattice. In the far-field our definition of the shift can be thought of as a leading order correction.

To finalize the definition of the far-field predictor displacement $\bm{u}^0 = (U^0, \bm{p}^0)$ we must now perform a purely technical modification of $(U^{\rm lin},\bm{p}^{\rm lin})$ near the dislocation core to ``smoothen'' the singularity. To that end we  introduce a smooth transition function, $\eta:\mathbb{R}\to\mathbb{R}$, which satisfies $\eta(x) = 1$ for $x \geq 1$, $\eta(x) = 0$ for $x \leq 0$, and $\eta'(x) > 0$ for $0 < x < 1$; and an argument function $\arg:\mathbb{R}^2 \to [0, 2\pi)$ where $\arg(x)$ is the angle between the positive $x$-axis and $x$.  We then set our predictor displacement and shift fields to be
\begin{equation}\label{predictor_def}
\begin{split}
U^0(x) =~& U^{\rm lin}(\zeta^{-1}(x)), \qquad \zeta(x) := x - b_{12}\eta(|x-\hat{x}|/\hat{r}) \arg(x-\hat{x}),
\end{split}
\end{equation}
and $\bm{p}^0(x)$ is obtained from
\begin{equation}\label{p_predictor_def}
\frac{\partial^2 W({\sf 0}, \bm{0})}{\partial \bm{p} \partial \bm{p} } (\bm{p}^{0} - \nabla U^{0} \bm{p}^{\rm ref}) = -\frac{\partial^2 W({\sf 0}, \bm{0})}{\partial\bm{p} \partial\mF} \nabla U^{0}. 
\end{equation}

% This modification of $U^{\rm lin}$ in~\eqref{predictor_def} is due to our need to obtain a smooth predictor in the far-field as well as the defect core as alluded to in~\cite{Ehrlacher2013}.  

The function $\zeta$ is taken verbatim from~\cite{Ehrlacher2013}, where a more detailed discussion of this ``smoothing'' step can be found. In particular, all of the estimates of \cite[Lemma 3.1]{Ehrlacher2013} apply directly to $U^0$ in the present work. Moreover, it is straightforward to deduce corresponding estimates for each $p^0_\alpha$ from equation~\eqref{p_predictor_def} and the estimates for $U^0$, which yields the following result.% Summarizing these key results from~\cite[Lemma 3.1]{Ehrlacher2013}, we have

\begin{theorem}[Multilattice version of Lemma 3.1 in \cite{Ehrlacher2013}]\label{ml_thm_vs}
There exists a solution, $U^{\rm lin}$, to~\eqref{lin_elastic} and $\hat{r}$ such that $\zeta$ is bijective on $\mathbb{R}^2\setminus\Gamma$. $U^0$ is then well-defined and a solution, $\bm{p}^0$, to~\eqref{p_predictor_def} exists.  These functions satisfy $\nabla^j S_0 u^0_\alpha(x+) = \nabla^j S_0 u^0_\alpha(x-)$ and $\nabla^j p^0_\alpha(x+) = \nabla^j p^0_\alpha(x-)$  for all $\alpha \in \mathcal{S}$, nonnegative integers $j$, and $x \in \Gamma$, and
\[
|\nabla^j U^0(x) - \nabla^j U^{\rm lin}(\zeta^{-1}(x))| \lesssim |x|^{-j-1}, \qquad |\nabla^j p^0_\alpha(x) - \nabla^j p^{\rm lin}_{\alpha}(\zeta^{-1}(x))| \lesssim |x|^{-j-2}.
\]
\end{theorem}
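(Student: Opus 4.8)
The plan is to deduce the statement from the simple-lattice result~\cite[Lemma~3.1]{Ehrlacher2013} already invoked for $U^0$, and to treat the shift field $\bm{p}^0$ as an algebraic slave of $\nabla U^0$. First I would construct $U^{\rm lin}$: by~\cite[Equation~3.11]{olsonOrtner2016}, Assumption~\ref{coercive} forces the Cauchy--Born tensor $\mathbb{C}$ in~\eqref{elastic_tensor} to satisfy a Legendre--Hadamard condition, so the first three equations of~\eqref{lin_elastic} are solved by the classical techniques of~\cite{hirth1982theory}. The resulting $U^{\rm lin}$ is real-analytic on $\mathbb{R}^2\setminus\Gamma$, $\nabla^j U^{\rm lin}$ is positively homogeneous of degree $-j$ (so $|\nabla^j U^{\rm lin}(y)|\lesssim|y|^{-j}$ for $j\geq1$), and — the jump $U^{\rm lin}(x+)-U^{\rm lin}(x-)=-b$ across $\Gamma$ being constant, together with the second jump condition in~\eqref{lin_elastic} — $\nabla^j U^{\rm lin}$ is continuous across $\Gamma$ for every $j\geq1$.

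Since $\zeta$ in~\eqref{predictor_def} is literally the map of~\cite{Ehrlacher2013} and $U^{\rm lin}$ has the same qualitative structure as its simple-lattice analogue, \cite[Lemma~3.1]{Ehrlacher2013} transfers verbatim: for $\hat r$ large enough — so that the perturbation $-b_{12}\eta(|x-\hat x|/\hat r)\arg(x-\hat x)$ has gradient of size $O(1/\hat r)$, the linear vanishing of $\eta$ at $0$ absorbing the $|x-\hat x|^{-1}$ singularity of $\nabla\arg$ — the map $\zeta$ is a diffeomorphism of $\mathbb{R}^2\setminus\Gamma$ onto its image, $U^0=U^{\rm lin}\circ\zeta^{-1}$ is well-defined, $S_0U^0\in C^\infty$ across $\Gamma$, and $|\nabla^j U^0(x)|\lesssim|x|^{-j}$ together with $\bigl|\nabla^j U^0(x)-(\nabla^j U^{\rm lin})(\zeta^{-1}(x))\bigr|\lesssim|x|^{-j-1}$; the extra power of decay in the last estimate rests on $\nabla\zeta^{-1}(x)=I+O(|x|^{-1})$ and $\nabla^k\zeta^{-1}(x)=O(|x|^{-k})$, $k\geq2$, in the far field. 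This already yields all assertions about $u^0_0=U^0$.

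It then remains to treat $\bm{p}^0$. Viewed pointwise in $x$, \eqref{p_predictor_def} is a finite-dimensional linear system $\partial^2_{\bm{p}\bm{p}}[V(\bm{0})]\,\bm{p}^0(x)=-\partial_{\bm{p}\mF}[V(\bm{0})]\,\nabla U^0(x)$. After normalising $p_0\equiv0$, the operator $\partial^2_{\bm{p}\bm{p}}[V(\bm{0})]$ is positive definite on the reduced shift space; this is exactly the non-degeneracy that makes the inner minimisation in~\eqref{elastic_tensor} well-posed, and it follows from Assumption~\ref{coercive} as established in~\cite{olsonOrtner2016}. Hence there is a \emph{fixed} linear map $L$, independent of $x$ with components $L_\alpha$, with $\bm{p}^0(x)=L\,\nabla U^0(x)$, which proves existence and uniqueness of $\bm{p}^0$; the same algebra applied to~\eqref{p_lin_elastic} gives $\bm{p}^{\rm lin}(y)=L\,\nabla U^{\rm lin}(y)$. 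Differentiating, $\nabla^j p^0_\alpha(x)=L_\alpha\nabla^{j+1}U^0(x)$ and $(\nabla^j p^{\rm lin}_\alpha)(\zeta^{-1}(x))=L_\alpha(\nabla^{j+1}U^{\rm lin})(\zeta^{-1}(x))$, so $|\nabla^j p^0_\alpha(x)|\lesssim|x|^{-j-1}$ and
\[
\nabla^j p^0_\alpha(x)-(\nabla^j p^{\rm lin}_\alpha)(\zeta^{-1}(x))=L_\alpha\bigl[\nabla^{j+1}U^0(x)-(\nabla^{j+1}U^{\rm lin})(\zeta^{-1}(x))\bigr]\lesssim|x|^{-j-2},
\]
the last bound being the $U^0$-estimate at order $j+1$. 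With $u^0_\alpha=U^0+p^0_\alpha$, the first displayed estimate of the theorem follows from $\bigl|\nabla^j U^0(x)-(\nabla^j U^{\rm lin})(\zeta^{-1}(x))\bigr|\lesssim|x|^{-j-1}$ and $|\nabla^j p^0_\alpha(x)|\lesssim|x|^{-j-1}$ (and, if $u^{\rm lin}$ is read as $U^{\rm lin}+p^{\rm lin}_\alpha$, from the sharper $|x|^{-j-2}$ bound just obtained). Regularity across $\Gamma$ for $\bm{p}^0$ comes from $\nabla^j p^0_\alpha=L_\alpha\nabla^{j+1}U^0$ and $S_0U^0\in C^\infty$ across $\Gamma$, noting that differentiation annihilates the additive $-b$ built into $S_0$; applying the fixed map $L_\alpha$ preserves the resulting continuity, and combining with the $\alpha=0$ case gives the $S_0$-continuity of $\nabla^j S_0 u^0_\alpha$ for all $\alpha$.

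The upshot is that there is no essential new difficulty beyond~\cite[Lemma~3.1]{Ehrlacher2013}: the multilattice structure enters only through the pointwise-slaved $\bm{p}^0$, whose decay and regularity are dictated by those of $\nabla U^0$ through a single constant linear map. The one genuinely new verification is the unique solvability of~\eqref{p_predictor_def} and~\eqref{p_lin_elastic}, i.e., invertibility of $\partial^2_{\bm{p}\bm{p}}[V(\bm{0})]$ on the reduced shift space, which is extracted from Assumption~\ref{coercive} precisely as in the Cauchy--Born construction~\eqref{elastic_tensor}. The only other point requiring care is bookkeeping — tracking how $S_0$ restricts to the shift components $p_\alpha=u_\alpha-u_0$, on which the additive $-b$ cancels — so that the across-$\Gamma$ statements for $U^0$, $\bm{p}^0$, and $u^0_\alpha$ are correctly matched.
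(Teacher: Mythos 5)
Your proposal is correct and follows essentially the same route as the paper: the displacement results are imported verbatim from~\cite[Lemma 3.1]{Ehrlacher2013}, the solvability of~\eqref{p_lin_elastic} and~\eqref{p_predictor_def} is reduced to the invertibility of $\partial^2_{\bm{p}\bm{p}}V(\bm{0})$ guaranteed by Assumption~\ref{coercive} via~\cite{olsonOrtner2016}, and the shift estimates and across-$\Gamma$ regularity then follow from the $U^0$/$U^{\rm lin}$ estimates through the fixed linear slaving map. Your write-up merely spells out the steps the paper declares ``immediate,'' including the gain of one power of decay from $\bm{p}^0 = L\,\nabla U^0$ and the cancellation of the constant jump $-b$ under differentiation.
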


\begin{proof}
As all results concerning displacements are proven in~\cite{Ehrlacher2013}, we shall only concern ourselves with the results for the shifts. Once we establish existence of $\bm{p}$, the corresponding estimates are immediate from the definition of $\bm{p}^0$ and $\bm{p}^{\rm lin}$ in~\eqref{p_lin_elastic} and~\eqref{p_predictor_def} and the corresponding results for $U^0$ and $U^{\rm lin}$.

For existence of a solution to~\eqref{p_predictor_def}, we need only note that it was shown in~\cite[Theorem 3.7]{olsonOrtner2016} that the atomistic stability assumption, Assumption~\ref{coercive}, implies a corresponding estimate on stability of the Cauchy-Born model. In particular, $\frac{\partial^2 W(0, \bm{0})}{\partial \bm{p} \partial \bm{p} }$ was shown to be invertible.
\end{proof}

\subsection{Main Results}

We have now fully defined all ingredients in the dislocation energy~\eqref{at_u}, 
$\mathcal{E}^{\a}(\bm{u}) = \sum_{\ell \in \Lambda} V_\ell(D\bm{u}(\ell))$, 
and employing the techniques of \cite{Ehrlacher2013, olsonOrtner2016} we can further show that this energy is well-defined and continuously Frechet differentiable over the admissible displacements, $\mathcal{A}$, defined in~\eqref{admissible}. 
% It should come as no surprise that the proof makes heavy use of techniques previously used in~. % To that end and for convenience of the reader, we collect and state a number of results from those papers phrased in the notation used in the current paper which will be used in the proof below.

\begin{theorem}%[Dislocation energy is well-defined]
   \label{energy_thm}
%\sout{Under the hypotheses on the site potential and coercivity of the defect-free energy} 
The atomistic energy function, $\mathcal{E}^{\rm a}(\bm{u})$, is well defined and belongs to $C^3(\mathcal{A})$. % More generally, a $C^{k+1}$ site potential yields a $C^k$ energy. 
\end{theorem}
\begin{proof}[{\textit{\textbf{Proof (Idea of the proof).}}}] For $\bm{u} \in \bm{\mathcal{U}}_0$, we can write
\begin{equation}
\mathcal{E}^\a(\bm{u}) = \sum_{\ell \in \Lambda}\Big[ V_{\ell}\big(D\bm{u}(\ell)\big) - \< \delta V_\ell(\bm{0}), D\bm{u}(\ell) \>\Big] + \<\delta \mathcal{E}^\a(\bm{0}),\bm{u}\>.
\end{equation}
We will show $ 
\sum_{\ell \in \Lambda}\big[ V_{\ell}(D\bm{u}(\ell)) - \< \delta V_\ell(\bm{0}), D\bm{u}(\ell) \>\big]$ is well-defined and differentiable, while $
\<\delta \mathcal{E}^\a(\bm{0}),\bm{u}\>$ is a bounded linear functional. The details are presented in Section \ref{sec:pf:energy}.
\end{proof}

%% define the problem
Having established that $\mathcal{E}^{\rm a}(\bm{u})$ is well-defined and differentiable on the natural energy space $\bm{\mathcal{U}}_0$, we are then interested in the force equilibrium problem
\begin{equation}\label{Aprob}
\<\delta \mathcal{E}^\a(\bm{u}^\infty),\bm{v}\> = 0, \qquad \forall \bm{v} \in \bm{\mathcal{U}}_0.
\end{equation}
We note that two important special cases are local minima (stable equilibria) and index-1 saddles (transition states between stable equilibria). In the present work, we will not go into details about these specific problems but focus on the regularity of equilibria, that is, solutions to \eqref{Aprob}.

% It concludes this section defining all of the perquisites necessary to state our main result concerning decay of the elastic far-fields in a multilattice generated by a straight dislocation.  
Our decay rates are formulated in terms of finite differences (alternatively, they could be written as derivatives of smooth interpolants of lattice functions) using the notation
\begin{equation}
\begin{aligned}
D_{\rho}u_\alpha(\ell) &:= u_\alpha(\ell + \rho) - u_\alpha(\ell),
   \\ 
\tilde{D}_{\rho}u_{\alpha}(\ell) &= \begin{cases} RD_{\rho}Su_{\alpha}(\ell), \quad &\ell \in \Omega_{\Gamma} \\
D_{\rho}u_{\alpha}(\ell), \quad \quad &\ell \notin \Omega_\Gamma,
\end{cases}
\label{eq:tildeDrho}
\end{aligned}
%    \quad \text{and} \\
%   D_{\bm{\rho}} u_\alpha(\xi) &:= D_{\rho_1}D_{\rho_2}\cdots D_{\rho_k} u_\alpha(\xi)
%   \qquad \text{for } \bm{\rho} = (\rho_1, \dots, \rho_k) \in \Lambda^k.
\end{equation}
for $\rho \in \Lambda, ~\alpha \in \mathcal{S}$.

\begin{theorem}[Decay of dislocation far-fields]\label{decay_thm}
Let $\bm{u}^\infty = (U^\infty, \bm{p}^\infty) \in \mathcal{A}$ be a critical point of $\mathcal{E}^\a(\bm{u})$, i.e. satisfy \eqref{Aprob}, and suppose $V \in C^4(\mathcal{A})$, satisfies the slip invariance condition, and the stability Assumption~\ref{coercive} is satisfied.  Then for all $|\ell|$ large enough,
   \begin{equation}\label{decay1}
      \begin{split}
      \big|\tilde{D}_{{\rho}} U^\infty(\ell)\big| \lesssim~& |\ell|^{-2}\log |\ell|,
            \quad \forall {\rho} \in \mathcal{R}_1, \quad \text{and} \\
      \big| p_\alpha^\infty(\ell)\big| \lesssim~& |\ell|^{-2}\log |\ell|,
          \quad \forall \alpha \in \mathcal{S}. %\forall \rho \in \mathcal{R}_1.
      \end{split}
   \end{equation}
\end{theorem}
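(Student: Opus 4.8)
The plan is to follow the regularity-bootstrap strategy of \cite{Ehrlacher2013} and \cite{olsonOrtner2016}, the one genuinely new ingredient being that the displacement--shift coupling must be exploited to compensate for the fact that in a multilattice the species-resolved residual forces of the predictor decay only like $|\ell|^{-2}$, not $|\ell|^{-3}$. First, since $\bm{u}^\infty\in\mathcal{A}$ is a local minimizer and $\mathcal{E}^\a\in{\rm C}^1(\mathcal{A})$ by Theorem~\ref{energy_thm}, it satisfies $\langle\delta\mathcal{E}^\a(\bm{u}^\infty),\bm{v}\rangle=0$ for all $\bm{v}\in\bm{\mathcal{U}}$; using the slip-invariance identity \eqref{slip_strain} and the chain rule for $\tilde{D}$ this reads $\sum_{\ell}\sum_{\triple}V_{\triple}\big(e(\ell)+\tilde{D}\bm{u}^\infty(\ell)\big)\cdot\tilde{D}_{\triple}\bm{v}(\ell)=0$. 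Because $\|\bm{u}^\infty\|_{\a_1}<\infty$, both $\tilde{D}\bm{u}^\infty$ and, via \eqref{cond2}, the shifts $p_\alpha^\infty$ lie in $\ell^2(\Lambda)$; in particular they tend to $0$, so beyond some radius they are small enough that the homogeneous stability of Assumption~\ref{coercive} can be used locally.

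\textbf{Linearisation and the net-force reduction.} Next I Taylor-expand $V_{\triple}\big(e(\ell)+\tilde{D}\bm{u}^\infty(\ell)\big)$ about $\bm{0}$, using $|e(\ell)|\lesssim|\ell|^{-1}$ and $|\tilde{D}_{-\rho}e(\ell)|\lesssim|\ell|^{-2}$ from Lemma~\ref{diff_grad}, to recast the equilibrium as $\langle\mathcal{H}\bm{u}^\infty,\bm{v}\rangle=\langle\mathcal{F},\bm{v}\rangle$, where $\mathcal{H}=\delta^2\mathcal{E}^\a_{\rm hom}(\bm{0})$ is the translation-invariant, coercive homogeneous Hessian and $\mathcal{F}=\mathcal{F}^{\rm res}+\mathcal{F}^{\rm lin}+\mathcal{F}^{\rm nl}$ collects (i) the predictor residual $\mathcal{F}^{\rm res}$, (ii) terms linear in $\tilde{D}\bm{u}^\infty$ carrying prefactors of order $|\ell|^{-1}$, and (iii) terms quadratic or higher in $(\tilde{D}\bm{u}^\infty,\bm{p}^\infty)$ (this is where ${\rm C}^k$ with $k\ge4$ enters). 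The decisive point is that $\mathcal{F}^{\rm res}$ is \emph{not} of size $|\ell|^{-3}$ component by component; Lemma~\ref{at_force} yields this decay only after summing over the species index $\gamma$. Accordingly I split the system into the equation obtained by summing over $\gamma$ --- which governs $U^\infty$ and whose source is the net per-site force, hence $O(|\ell|^{-3})$ by \eqref{at_force_lin}--\eqref{at_force_lin_s} --- and the species-resolved equations, whose leading part is the algebraic relation $\sum_{\chi}\partial_{\bm{p}_\chi\bm{p}_\gamma}W(\bm{0})\,p_\chi^\infty=-\partial_{\bm{p}_\gamma\mF}W(\bm{0})\,\nabla U^\infty+(\text{terms of faster decay})$. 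Since $\partial_{\bm{p}\bm{p}}W(\bm{0})$ is invertible (proof of Theorem~\ref{ml_thm_vs}), this \emph{slaves} $\bm{p}^\infty$ to $\nabla U^\infty$ up to faster-decaying corrections --- the ``strong localisation of shifts'' --- and reduces the analysis to a single divergence-form elliptic equation for $U^\infty$ with the linearised Cauchy--Born tensor $\mathbb{C}$, which is Legendre--Hadamard by \cite[Eq.~3.11]{olsonOrtner2016}.

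\textbf{Green's function estimates and bootstrap.} For the reduced operator I use the lattice Green's function $G$ --- defined on $\R^2\setminus\Gamma$ by conjugating with the slip maps $S,R$ of \eqref{fancy_D} --- with the standard two-dimensional discrete-elliptic decay $|\tilde{D}_\rho G(\ell,\ell')|\lesssim(1+|\ell-\ell'|)^{-1}$ and $|\tilde{D}_\rho\tilde{D}_{\rho'}G(\ell,\ell')|\lesssim(1+|\ell-\ell'|)^{-2}$, which follow from discrete elliptic regularity as in \cite{Ehrlacher2013, olsonOrtner2016}. Writing the net force as a discrete divergence of an $O(|\ell|^{-2})$ field, following \cite[Cor.~5.2]{Ehrlacher2013}, and representing $\tilde{D}_\rho U^\infty$ as the convolution of $\tilde{D}_\rho\tilde{D}_{\rho'}G$ with that field, the borderline two-dimensional convolution of $|\cdot|^{-2}$ with $|\cdot|^{-2}$ produces the rate $|\ell|^{-2}\log|\ell|$. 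This estimate must be run iteratively: starting from the $\ell^2$ bound (a crude pointwise rate obtained from a dyadic-annulus/Caccioppoli-type estimate), each pass strictly improves the exponent --- the $|\ell|^{-1}$ prefactors in $\mathcal{F}^{\rm lin}$ and the products in $\mathcal{F}^{\rm nl}$ being of higher order --- until the rate saturates at the elasticity exponent $2$ fixed by the $|\ell|^{-3}$ net force, with a $\log|\ell|$ loss at the threshold. Finally, feeding $|\tilde{D}_\rho U^\infty(\ell)|\lesssim|\ell|^{-2}\log|\ell|$ back into the slaving relation transfers the same rate to $\bm{p}^\infty$, since there the shifts are recovered algebraically rather than through a second elliptic inversion, so no decay is lost.

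\textbf{Main obstacle.} The crux is precisely the bookkeeping forced by the multilattice structure: because the species-resolved residual forces decay only like $|\ell|^{-2}$, a naive vector-valued Green's function estimate for $(U^\infty,\bm{p}^\infty)$ jointly would not close the bootstrap at $|\ell|^{-2}\log|\ell|$. One must keep the net-force/elasticity channel and the shift-slaving channel separate at every iteration while simultaneously tracking the slip operators $S,R$, the branch cut $\Gamma$, and the modified differences $\tilde{D}$ near the core; and one must establish the requisite decay estimates for the discrete multilattice Green's function associated with $\mathcal{H}$, which, though largely importable from \cite{olsonOrtner2016} and the standard theory of discrete elliptic systems, is technically delicate.
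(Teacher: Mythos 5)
Your proposal is correct in substance and reaches the right estimates, but it organises the argument genuinely differently from the paper. You eliminate the shifts by an explicit Schur-complement (``slaving'') reduction: invert the shift block of the Hessian, reduce to a single Cauchy--Born divergence-form problem for $U^\infty$ driven by the net per-site force (written, via \cite[Cor.~5.2]{Ehrlacher2013}, as a divergence of an $O(|\ell|^{-2})$ field), and then recover $\bm{p}^\infty$ from $\tilde{D}U^\infty$ without a second elliptic inversion. The paper never decouples the system: it linearises at the homogeneous state (Lemma~\ref{lin_lemma}), splits the residual into three channels paired respectively with $\tilde{D}\bm{v}$, $Dv_0$ and $\bm{q}$ (Theorem~\ref{lin_prob_thm}, with $|\bar f|\lesssim |\ell|^{-2}+|\tilde{D}\bm{u}^\infty|^2$ and $|g|,|k|\lesssim |\ell|^{-2}$), and tests against the full multilattice Green's matrix of \cite{olsonOrtner2016}; the block decay \eqref{hessianDecay1}--\eqref{hessianDecay3}, in which the $0\bm{p}$ and $\bm{p}\bm{p}$ blocks decay one and two orders faster than the $00$ block, is exactly the Fourier-side incarnation of your slaving and is what absorbs the merely $|\ell|^{-2}$ species-resolved residual. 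Your route makes the mechanism (net force plus optical gap) explicit and reduces the elliptic analysis to simple-lattice technology; the paper's route avoids carrying out the exact discrete reduction by importing ready-made block estimates, and then runs a short bootstrap (cutoff/reflection argument for the suboptimal $|\ell|^{-1}$ bound, one further pass for $|\ell|^{-2}\log|\ell|$).

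Two points in your sketch need tightening. First, the slaving relation written with the constant matrix $\partial_{\bm{p}\bm{p}}W(\bm{0})$ is not exact: the discarded difference between the full discrete shift-shift operator and $\partial_{\bm{p}\bm{p}}W(\bm{0})$ acts on $\bm{p}^\infty$ through finite differences and is a priori of the \emph{same} order as $\bm{p}^\infty$, not of faster decay; to close the argument you must either invert the full discrete block (legitimate, since $\hat{H}_{\bm{p}\bm{p}}(\xi)$ is uniformly invertible, so its inverse kernel decays rapidly --- this is what \eqref{hessianDecay3} encodes) or track decay of $D\bm{p}^\infty$ separately in the bootstrap. Second, the branch cut is more than bookkeeping: the paper needs the reflection argument (replacing $\Gamma$ by $\Gamma_S$) to obtain the suboptimal bound on both sides of the cut, and in the final pass the mismatch between $D$ and $\tilde{D}$ on the strip $U_\Gamma$ produces an extra, effectively one-dimensional, sum that must itself be shown to be $\lesssim |\ell|^{-2}\log|\ell|$ (equations \eqref{third_eq}--\eqref{fourth_eq}); your ``conjugated Green's function on $\R^2\setminus\Gamma$'' would have to reproduce these steps explicitly.
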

\begin{proof}[{\textit{\textbf{Proof (Idea of the proof).}}}]
The main idea of proving Theorem \ref{decay_thm} is to show that $\bm{u}^\infty$ solves a linearized problem whose Green's function may be estimated in terms of existing Green's function estimates developed in~\cite{olsonOrtner2016} for point defects in multilattices.  The residual terms found in this linearization process are estimated in close analogy to~\cite{Ehrlacher2013}, and then the two estimates are combined to yield the theorem. The complete proof is given in Section \ref{sec:proof:decay_thm}.
\end{proof}

%In the statement of the theorem, we have used the modified Vinogradov notation $\lesssim$.The implied constant here is allowed to depend upon the interatomic potential $V$, the diameter of the interaction range $\mathcal{R}_1$, and the stability constant $\gamma_{\rm a}$.

\begin{remark}\label{decay_remark}
   Although we have only stated the decay of the elastic strains, we have no doubt that (a much more tedious and involved argument) would show corresponding decay estimates for strain gradients (and the corresponding shift gradients), 
   \begin{equation}\label{decay2}
      \begin{split}
      \big|\tilde{D}_{\bm{\rho}} U^\infty(\ell)\big| \lesssim~& |\ell|^{-1-j}\log |\ell|,
            \quad \forall~\bm{\rho} \in (\mathcal{R}_1)^j, 1 \leq j \leq 3, \quad \text{and} \\
      \big|D_{\bm{\rho}} p_\alpha^\infty(\ell)\big| \lesssim~& |\ell|^{-2-j}\log |\ell|,
          \quad \forall~\bm{\rho} \in (\mathcal{R}_1)^j, \forall~\alpha \in \mathcal{S}, 0 \leq j \leq 2.
      \end{split}
   \end{equation}
   We remark on how to prove such a result at the end of the proof of Theorem~\ref{decay_thm}, but since we are not providing the details of the proofs we choose not to state this as part of the theorem. These results would require higher regularity of the site potential $V$.
\end{remark}

%The rest of the paper is devoted to proving Theorem \ref{decay_thm} and numerically verify its application on a convergent approximation scheme by truncating \eqref{Aprob} to a finite domain. 

%\section{Proof of Theorem~\ref{decay_thm}}\label{proof}

\section{Applications and Numerical Examples}
\label{sec:numerics}
In this section, we use our main result, Theorem \ref{decay_thm}, to establish convergence rates of a numerical method to approximate the defect core geometry.  We will illustrate the convergence of edge dislocation geometries in a range of diamond cubic and B2 crystal structures. 

%, and we also illustrate Lemma~\ref{at_force} by plotting the residual forces on the dislocation predictor in the atomistic model.  We shall see that, indeed, the individual residual forces do not decay at the same $|r|^{-3}$ rate as in a simple lattice, but the sum of the forces does in fact decay at this rate.  As expected by our theoretical predictions, nonetheless, we still obtain a convergent numerical approximation to the defect solution, $\bm{u}^\infty$.

\subsection{Supercell approximation scheme}
A simple but effective computational scheme to approximately solve \eqref{Aprob} is to project the problem to a finite-dimensional subspace. Due to the decay estimates \eqref{decay1} and \eqref{decay2}, it is reasonable to expect that simply truncating to a finite domain $\Omega \subset \Lambda$ yields a convergent approximation scheme. We first parametrize the size of $\Omega$ by a radius
\begin{align*}
R_{\Omega} = \sup \{ r > 0 : B_r(\hat{x}) \subset \Omega \}.
\end{align*}
We then define the approximation space
%A simple, yet effective algorithm for approximating a single, straight-line dislocation in an infinite crystal is to fix some finite computational domain $\Omega \subset \Lambda$ and define the approximation space
\begin{align*}
\bm{\mathcal{U}}_{\Omega} := \{\bm{u} \in \mathcal{A} : u_\alpha(\ell) = 0, \quad \forall \ell \in \Lambda \setminus \Omega, \alpha \in \mathcal{S}\},
\end{align*}
and solve the finite-dimensional optimisation problem
\begin{equation}\label{uo-prob}
\bm{u}_\Omega = \argmin \big\{ \mathcal{E}^a(\bm{u}) ~\big|~ u \in \bm{\mathcal{U}}_{\Omega}\big\}.
\end{equation}
In other words, all displacements in $\bm{\mathcal{U}}_{\Omega}$ satisfy a boundary condition clamping atom positions outside of $\Omega$ to those obtained from the continuum-elasticity predictor $\bm{u}^0 = (U^0, \bm{p}^0)$.  Since \eqref{uo-prob} is a pure
Galerkin projection of \eqref{Aprob}, and since we have precise estimates on the decay of the exact solution from Theorem~\ref{decay_thm} it is straightforward to prove convergence rates. This result requires a stronger stability condition than Assumption~\ref{coercive} used to establish the decay estimates. 

\begin{assumption}[Strong Stability] \label{strong_stab} There exists $\gamma_{\rm a} > 0$ such that 
\[
    \<\delta^2\mathcal{E}^\a(\bm{u}^\infty)\bm{v},\bm{v}\> \geq~ \gamma_{\rm a}\|\bm{v}\|_{\a_1}^2 , \quad \forall \, \bm{v} \in \bm{\mathcal{U}}_0.
\]
\end{assumption}

\begin{theorem}\label{alg_theorem}
Suppose that $\bm{u}^\infty \in \mathcal{A}$ is a solution to the atomistic Euler-Lagrange equations $\<\delta \mathcal{E}^\a(\bm{u}^\infty),\bm{v}\> = 0$ for all $\bm{v} \in \bm{\mathcal{U}}_0$ and that the strong stability Assumption~\ref{strong_stab} holds. Then there exists $R_0$ such that for all domains $\Omega$ with $R_{\Omega} \geq R_0$, there exists a solution $\bm{u}_\Omega$ to \eqref{uo-prob}, which satisfies
\begin{equation}\label{strain_est}
\|\bm{u}_\Omega - \bm{u}^\infty\|_{\a_1} \lesssim R_{\Omega}^{-1}\log R_{\Omega}.
\end{equation}
\end{theorem}

\begin{proof}
% For simplicity we assume that $\Omega$ is chosen so that its radius being
% \[
% R_{\Omega} = \sup \{ r > 0 : B_r(0) \subset \Omega \}.
% \]
As in any Galerkin method, the key is estimating the best approximation error of $\bm{u}^\infty$ in the space $\bm{\mathcal{U}}_\Omega$ by defining an interpolation operator $\Pi: \bm{\mathcal{U}}_0 \rightarrow \bm{\mathcal{U}}_{\Omega}$. This was accomplished in the work~\cite[Lemma A.1]{olsonOrtner2016} and~\cite[Lemma 12]{multiBlendingArxiv}, and it was shown that
\begin{align*}
\|\bm{u}^\infty - \Pi \bm{u}^\infty\|_{\a_1} \lesssim \|\nabla I U^{\infty}\|_{L^2(\mathbb{R}^2\setminus B_{R_{\Omega}/2}(\hat{x}))} + \|I\bm{p}^{\infty}\|_{L^2(\mathbb{R}^2\setminus B_{R_{\Omega}/2}(\hat{x}))},
\end{align*}
where we recall $I$ was a piecewise linear interpolation operator defined in Section \ref{sec:slip_space}. Using the decay estimates in Theorem~\ref{decay_thm}, we may then estimate these last two terms and have
\begin{align*}
\|\bm{u}^\infty - \Pi \bm{u}^\infty\|_{\a_1} \lesssim~ R_{\Omega}^{-1}\log R_{\Omega}.
\end{align*}
By continuity, Assumption~\ref{strong_stab} implies the strong stability on $\Pi \bm{u}^\infty$
\begin{align*}
\<\delta^2 \mathcal{E}^\a(\Pi \bm{u}^\infty)\bm{v},\bm{v} \> \gtrsim \|\bm{v}\|_{\a_1}^2, \quad \forall \bm{v} \in \bm{\mathcal{U}}_0,
\end{align*}
with a different implied constant that depends on $\gamma_{\rm a}$.  Moreover, we have
\begin{align*}
\<\delta \mathcal{E}^\a(\Pi \bm{u}^\infty), \bm{v} \> =~& \<\delta \mathcal{E}^\a(\Pi \bm{u}^\infty), \bm{v} \> - \<\delta \mathcal{E}^\a(\bm{u}^\infty), \bm{v} \> \\
\lesssim~& \|\Pi \bm{u}^\infty - \bm{u}^\infty\|_{\a_1} \|\bm{v}\|_{\a_1},  \quad \forall \bm{v} \in \bm{\mathcal{U}}_0,
\end{align*}
so it allows us to apply the inverse function theorem to deduce the existence of a solution $\bm{u}_\Omega \in \bm{\mathcal{U}}_\Omega$ to \eqref{uo-prob}
%\[
%\<\delta \mathcal{E}^\a(\bm{u}), \bm{v}\> = 0, \quad \forall  \bm{v} \in \bm{\mathcal{U}}_\Omega,
%\]
which satisfies
\[
\|\bm{u}_\Omega - \Pi \bm{u}^\infty\|_{\a_1} \lesssim~ R_{\Omega}^{-1}\log R_{\Omega}.
\]
Hence, we can obtain
\begin{align*}
\|\bm{u}_\Omega -  \bm{u}^\infty\|_{\a_1} \leq \|\bm{u}_\Omega - \Pi \bm{u}^\infty\|_{\a_1} + \|\bm{u}^\infty - \Pi \bm{u}^\infty\|_{\a_1} \lesssim~ R_{\Omega}^{-1}\log R_{\Omega},
\end{align*}
which yields the stated result.
\end{proof}

\subsection{Edge Dislocation in Diamond Structures}
%
%\subsubsection{Edge Dislocation in Silicon}
%
Multilattice crystals that have diamond structure can be regarded as the combination of two interpenetrating FCC lattices, where in the definition of a multilattice, we can take:
\[
\mB = \frac{1}{2} \begin{pmatrix} 0 &1 &1 \\
                      1 &0 &1 \\
											1 &1 &0 \end{pmatrix}, \quad p_0 = \bm{0}, \quad p_1 = \begin{pmatrix}1/4 \\ 1/4 \\ 1/4 \end{pmatrix}.
\]
We will test the predictions of Theorem~\ref{alg_theorem} on the computation of an edge disloction in Si and in CdTe, both modelled by a Stillinger-Weber potential~ \cite{stillinger1985}. We use the implementation provided by the open-source interatomic potential library \textsf{OpenKIM}~\cite{tadmor2011potential}. Following~ \cite{nandedkar1987atomic} we consider an edge dislocation with Burger's vector $\mathsf{b} = \frac{1}{2}\<\bar{1} 1 0\>$ in the $\{\bar{1} \bar{1} 1\}$ slip plane such that the dislocation line has direction $\<1 1 2\>$. The units have been chosen so that the lattice constant is unity. The corresponding edge dislocation in CdTe is shown in Figure \ref{fig:Disloc_CdTe}. The analytical solution to the continuum elasticity predictor problem may then be obtained from~\cite{hirth1982theory} and the predictor shifts from our linear approximation~\eqref{p_predictor_def}.

\begin{figure}
{\resizebox{3.5in}{2.4in}{\includegraphics{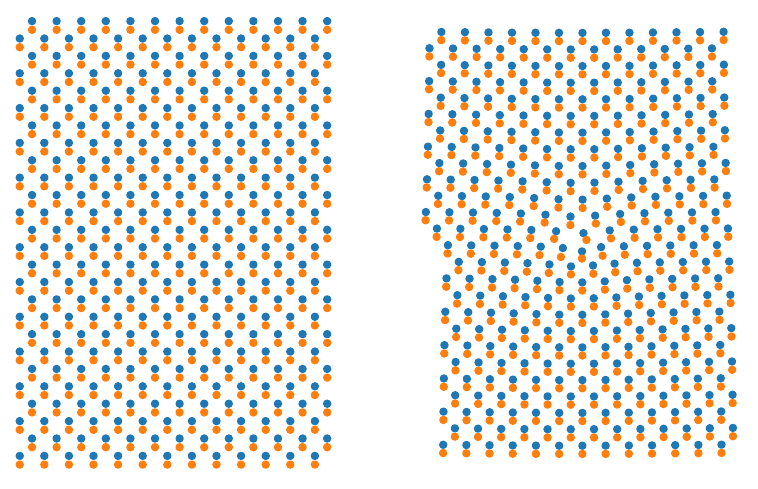}}}
\caption{ (110)[100] Edge Dislocation in CdTe (different colors represent different chemical species). Left: homogeneous crystal; Right: configuration after applying the predictor displacement ${\bm u}^0$.}
\label{fig:Disloc_CdTe}
\end{figure}

We perform a convergence study for the error in the $\|\cdot\|_{\a_1}$ norm by increasing the computational domain size $R_{\Omega}$. In Figure \ref{fig1} we show the decay of the geometry error $\|\bm{u} - \bm{u}_{\Omega}\|_{\a_1}$, where the reference solution $\bm{u}$ against which the error is measured is taken to be a numerical solution on a much larger domain with radius $R_{\rm dom} = 100$. The results are shown in Figure~\ref{fig1} and confirm the first order decay predicted by Theorem \ref{alg_theorem}. We also show the results if a pure continuum linearized elasticity (CLE) predictor without shift relaxation is employed. Instead, the {\em relative shift} for the far-field predictor is simply obtained from an affine transform following the CLE field, 
\begin{equation} \label{eq:naive_shift_predictor}
    {\bm p} = \nabla U^0 {\bm p}^{\rm ref},
\end{equation}
which is the first guess of solving \eqref{p_predictor_def}.
A lower order  convergence can be observed for both the Si and CdTe cases. It is not entirely clear to us why any convergence is obtained at all, but a possible explanation is that the naive guess \eqref{eq:naive_shift_predictor} is already a good approximation to the fully relaxed shift. 

\begin{figure}
    {\resizebox{4.0in}{3.0in}{\includegraphics{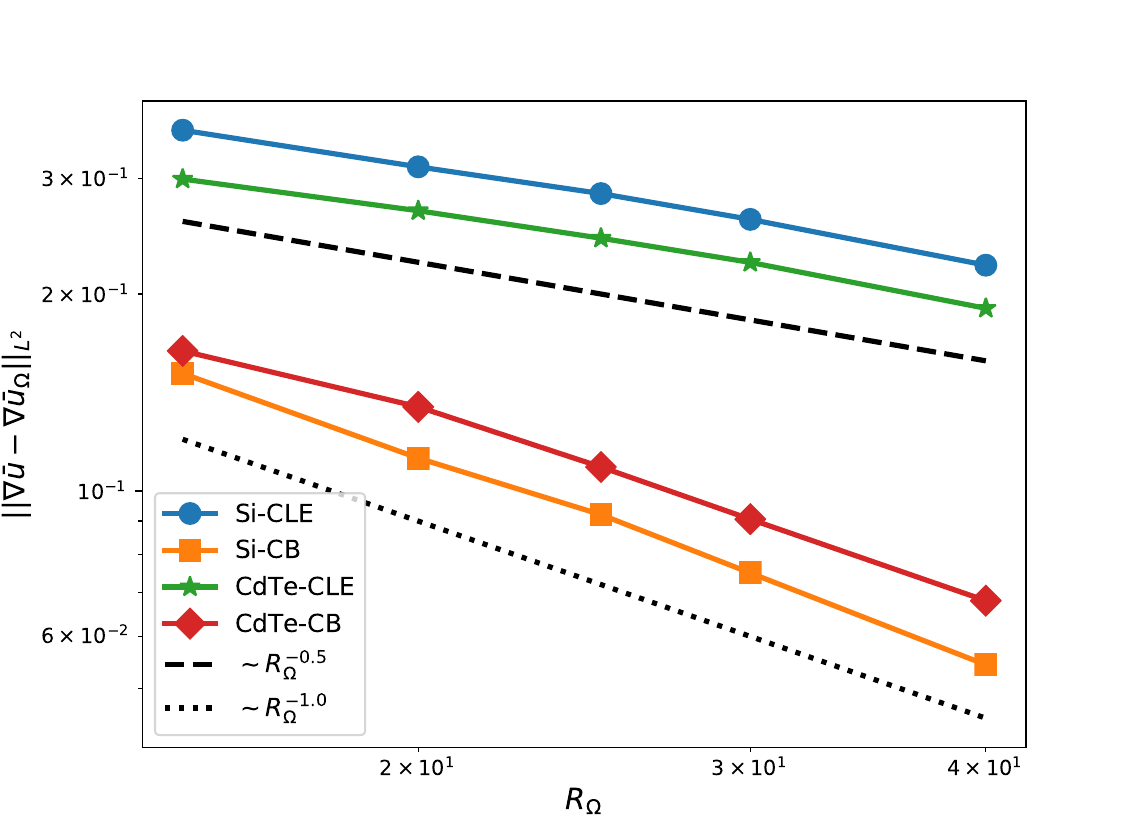}}}
    \caption{Geometry error for edge dislocation in Si and CdTe using Stilinger-Weber type potential.}
    \label{fig1}
\end{figure}

\begin{figure}
    {\resizebox{4.0in}{3.0in}{\includegraphics{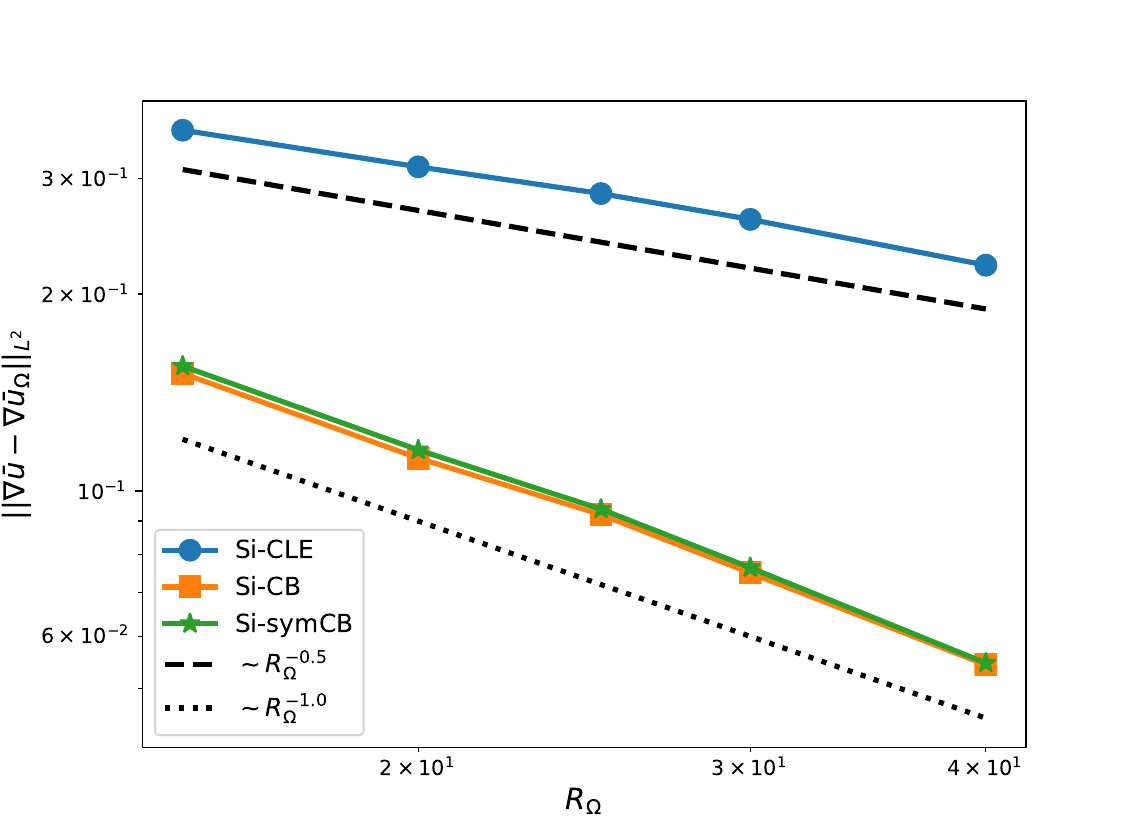}}}
    \caption{Geometry error for edge dislocation in Si using CLE, Cauchy-Born and symmetrized Cauchy-Born predictors.}
    \label{fig:symCB}
\end{figure}

In addition, we also test the effect of using the symmetrized Cauchy-Born rule proposed in \cite{koten2013} as the far-field predictor. In this scheme, the lattice site is placed in the center of a bond ${\bm m}$ instead of an atom. A single shift variable is then using to predict the atom positions at ${\bm m} \pm \frac12 {\bm p}$. This scheme can only be applied in single-species 2-lattices where additional symmetries are available, in particular diamond cubic Si; see \cite{koten_2011} for more details. The results of using the symmetrized Cauchy-Born rule are shown in Figure \ref{fig:symCB}. The convergence matches the regular Cauchy-Born rate very closely, despite the fact that the symmetrized Cauchy-Born energy does not employ an additional relaxation step. 

\subsection{Edge Dislocation in B2 Structures}
The B2 structure is an ordered bcc structure consisting of two simple cubic (SC) interpenetrating sublattices, and stoichiometrically it can be represented by 50:50 atomic distributions. Multilattice crystals forming B2 structure include for example nickel aluminum (AlNi), iron aluminum (AlFe), cobalt aluminum (AlCo) which we choose for our numerical tests. To define the multilattice, we can take:
\[
\mB = \begin{pmatrix} 1 &0 &0 \\
                      0 &1 &0 \\
											0 &0 &1 \end{pmatrix}, \quad p_0 = \bm{0}, \quad p_1 = \begin{pmatrix}1/2 \\ 1/2 \\ 1/2 \end{pmatrix}.
\]
We model the AlNi, AlFe and AlCo materials using embedded atom method (EAM) potentials \cite{Daw1984a} from the \textsf{OpenKIM} library. We consider an edge dislocation with Burger's vectors $\mathsf{b} = \<1 0 0\>$ in the $\{0 0 1\}$ slip plane. The corresponding geometries are shown in Figure \ref{fig:Disloc_NiAl}.

\begin{figure}
{\resizebox{4.5in}{2.25in}{\includegraphics{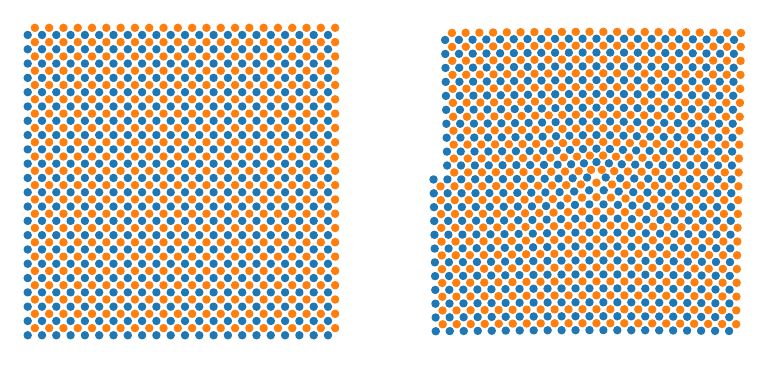}}}
\caption{ (001)[100] Edge Dislocation in B2 structures (different colors denote different chemical species). Left: homogeneous crystal; Right: deformed geometry after applying the predictor ${\bm u}^0$. \label{fig:Disloc_NiAl}}
\end{figure}

%The analytical solution to the continuum elasticity predictor problem may then be obtained from~\cite{hirth1982theory} after which the shifts may be solved for.
 We perform a convergence study for the error measured in the $\|\cdot\|_{\a_1}$ norm with increasing computational domain size $R_{\Omega}$. As reference solution we take ${\bm u}$ solved on a domain with radius $R_{\rm dom}=120$. The results are shown in Figure~\ref{fig2}. All solutions with Cauchy--Born predictor exhibit first order convergence as expected from the theory. Interestingly, using the simpler CLE predictor we now also obtain the quasi-optimal linear rate. This is an artifact of the high symmetry of the B2 structures: due to this high symmetry the simple CLE predictor of the shifts \eqref{eq:naive_shift_predictor} is already the optimal shift. All our numerical results are therefore entirely consistent with our theory. 
 
%  As shown in Figure \ref{fig1}, we additionally plot several other curves in the figure, which are discussed below and are obtained by using various other continuum elasticity predictors for $\bm{u}^0 = (U^0, \bm{p}^0)$.

\begin{figure}[htp!]
{\resizebox{4.0in}{3.0in}{\includegraphics{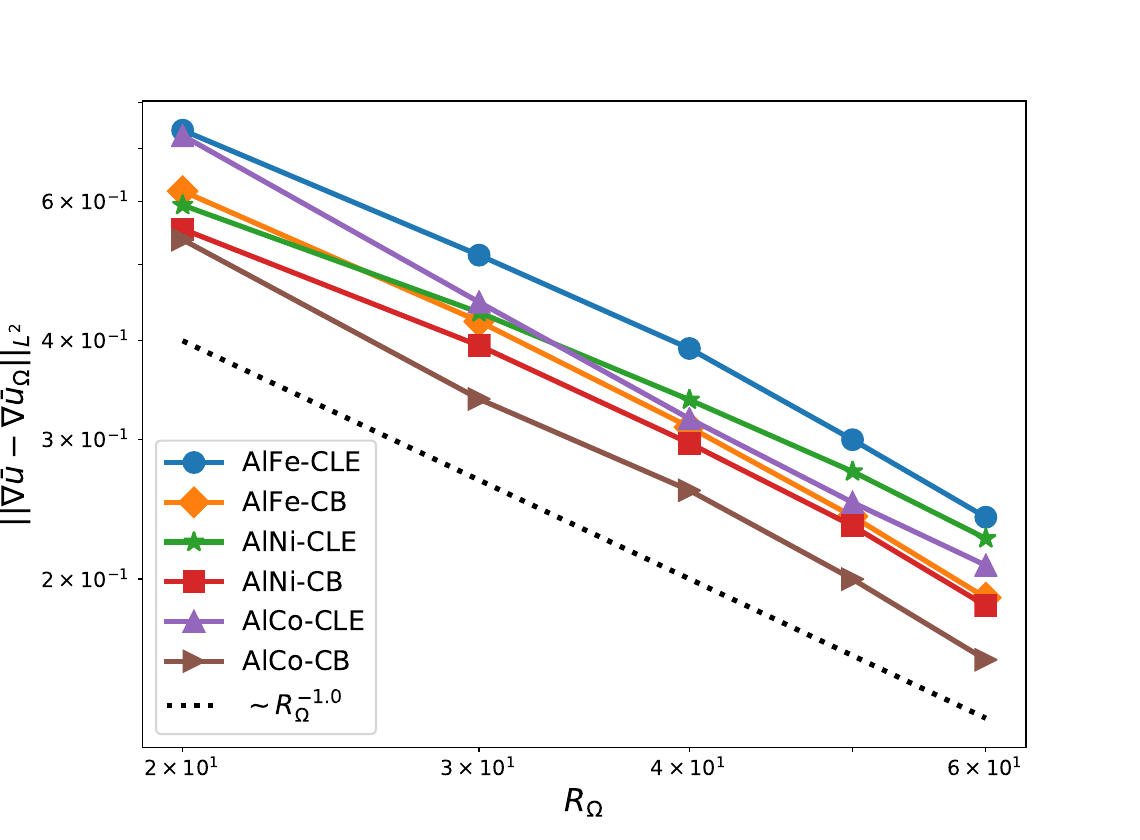}}}
\caption{Geometry error for edge dislocation in BCC crystals (AlNi, AlFe and AlCo) using EAM type potential.}
\label{fig2}
\end{figure}

\section{Conclusion}
In this work we extended the analysis of~\cite{Ehrlacher2013} characterising the decay of discrete elastic fields generated by defects in crystalline materials to general straight dislocation lines in multilattices. Specifically, we established that the elastic field generated by a dislocation in a multilattice can be decomposed into a continuum field predicted by a linearised Cauchy--Born elasticity theory, with a linearised discrete shift correction, and a discrete and nonlinear core corrector representing the defect core. We also discussed in detail the consequences of this result for cell size effects in numerical simulations.

\section{Proofs: The Energy Difference Functionals}
\label{sec:proof:energy_thm}

We first give a useful Lemma, which essentially follows from the Taylor expansions of finite differences using $\tilde{D}$ operator defined by \eqref{fancy_D}, due to $(S_0 U^0, \bm{p}^0)$ being smooth. It plays a key role in proving the decay estimate of residual forces (Lemma \ref{at_force}).

%It now follows that $(S_0 U^0, \bm{p}^0)$ are smooth, which allows us to perform Taylor expansions of finite differences using the $\tilde{D}$ operator:

\begin{lemma}\label{diff_grad}
With $\bm{u}^0 = (U^0, \bm{p}^0)$ and $e$ defined by \eqref{predictor_def}, \eqref{p_predictor_def} and \eqref{ml_strain} correspondingly, $\tilde{D}$ is defined by \eqref{fancy_D}, for $(\rho\alpha\beta) \in \mathcal{R}$ and $|\ell|$ large enough, we have
\begin{align}
\big|\tilde{D}_{\triple} \bm{u}^0(\ell) - \big(\nabla_{\rho} U^0(\ell) + p^0_\beta(\ell) - p^0_\alpha(\ell)\big)\big|  \lesssim~& |\ell|^{-2}, \quad \ell \notin \Omega_\Gamma, \label{zero_diff}\\
\big|\tilde{D}_{\triple} \bm{u}^0(\ell) - \big(R\nabla_{\rho} S_0U^0(\ell) + p^0_\beta(\ell) - p^0_\alpha(\ell)\big)\big|  \lesssim~& |\ell|^{-2}, \quad \ell \in \Omega_\Gamma
, \label{first_diff} \\
\big| e(\ell)\big| \lesssim~& |\ell|^{-1}, \label{second_diff} \\
\big|\tilde{D}_{-\rho} e(\ell)\big| \lesssim~& |\ell|^{-2} \label{third_diff}.
% Will need this later: \big| \sum_{\triple} \tilde{D}_{\rho} V(e(\ell)) \big| \lesssim~& |\ell|^{-3}
\end{align}
\end{lemma}
\begin{proof}
We first prove~\eqref{zero_diff} and consider the two cases as to whether $\ell \in \Omega_\Gamma$.  If $\ell \notin \Omega_\Gamma$, then $\tilde{D} = D$, $\bm{u}^0=(U^0,\bm{p}^0)$ is smooth, and we have a straightforward Taylor expansion
\begin{align*}
&\tilde{D}_{\triple} \bm{u}^0(\ell) =~ D_{\triple} \bm{u}^0(\ell) =~ D_{\rho} U^0(\ell) + D_{\rho}p^0_\beta(\ell) + p^0_\beta(\ell) - p^0_\alpha(\ell) \\
&=~ \nabla_{\rho} U^0(\ell) + \nabla_{\rho} p^0_\beta(\ell) + p^0_\beta(\ell) - p^0_\alpha(\ell) + \mathcal{O}(\|\nabla^2 U^0\|_{L^\infty(B_{r_{\rm cut}}(\ell))}) + \mathcal{O}(\|\nabla^{2} p^0_\beta\|_{L^\infty(B_{r_{\rm cut}}(\ell))}),
\end{align*}
and now from~\cite{hirth1982theory}, we know that $|\nabla^2 U^{\rm lin}| \lesssim~ |\ell|^{-2}$ and thus $|\nabla^2 U^{0}| \lesssim~ |\ell|^{-2}$ from~Theorem~\ref{ml_thm_vs} which further implies $|\nabla \bm{p}^0(\ell)| \lesssim~ |\ell|^{-2}$ from \eqref{p_predictor_def}. Hence
\[
\big|\tilde{D}_{\triple} \bm{u}^0(\ell) - \big(\nabla_{\rho} U^0(\ell) + p^0_\beta(\ell) - p^0_\alpha(\ell)\big)\big|  \lesssim~ |\ell|^{-2}.
\]

If $\ell \in \Omega_\Gamma$, from the definition  \eqref{ml_strain},  \eqref{fancy_D} and Appendix \ref{tedious}, we have 
\begin{align*}
\tilde{D}_{\triple} \bm{u}^0(\ell) &=~ \tilde{D}_\rho U^0(\ell) + \tilde{D}_\rho p^0_\beta(\ell) + p^0_\beta(\ell) - p^0_\alpha(\ell) \\
&=~ RD_\rho S_0 U^0(\ell) + p^0_\beta(\ell_1) + p^0_\beta(\ell_2) - p^0_\beta(\ell) - p^0_\alpha(\ell)
\end{align*}
for $\ell_1 = \ell+\rho\pm \mathsf{b}_{12}$, $\ell_2 = \ell \pm \mathsf{b}_{12}$ by definition of $\tilde{D}$ applied to $\bm{p}^0$. But now both $S_0U^0$ and $\bm{p}^0$ are smooth so we again have a Taylor expansion of
\begin{align*}
&\tilde{D}_{\triple} \bm{u}^0(\ell)
% =& R\nabla_{\rho}S_0U^0(\ell) + p^0_\beta(\ell) - p^0_\alpha(\ell) + \mathcal{O}(\|\nabla^2 S_0U^0\|_{L^\infty(B_{r_{\rm cut}}(\ell))} + \mathcal{O}(\|\nabla p^0_\beta\|_{L^\infty(B_{r_{\rm cut}}(\ell))})  \\
= R\nabla_{\rho}S_0U^0(\ell) + p^0_\beta(\ell) - p^0_\alpha(\ell) + \mathcal{O}(\|\nabla^2 S_0U^0\|_{L^\infty(B_{r_{\rm cut}}(\ell))} + \mathcal{O}(\|\nabla p^0_\beta\|_{L^\infty(B_{r_{\rm cut}}(\ell))}),
\end{align*}
which, because $S_0$ simply represents a shift by one Burgers vector, implies
\begin{align*}
\big|\tilde{D}_{\triple} \bm{u}^0(\ell) - \big(R\nabla_{\rho}S_0U^0 (\ell) + p_\beta(\ell) - p_\alpha(\ell)\big)\big| 
% \\
% \lesssim&~ \mathcal{O}(\|\nabla^2 U^0\|_{L^\infty(B_{r_{\rm cut}+\mathsf{b}_1}(\ell))}) + \mathcal{O}(\|\nabla p^0_\beta\|_{L^\infty(B_{r_{\rm cut}+\mathsf{b}_1}(\ell))}) \\
%&\lesssim~  \mathcal{O}(\|\nabla^2 U^0\|_{L^\infty(B_{r_{\rm cut}+b_1}(\ell))} + \mathcal{O}(\|\nabla p^0_\beta\|_{L^\infty(B_{r_{\rm cut}+b_1}(\ell))}) \\
\lesssim~|\ell|^{-2}.
\end{align*}

In order to prove~\eqref{second_diff}, we consider only the case $\ell \notin \Omega_\Gamma$ as the other case is analogous:
\begin{align*}
\big|e(\ell)\big| \leq~& \big|\tilde{D}_{\triple} \bm{u}^0(\ell) - \big(\nabla_{\rho} U^0(\ell) + p^0_\beta(\ell) - p^0_\alpha(\ell)\big)\big| + \big|\nabla_{\rho} U^0(\ell) + p^0_\beta(\ell) - p^0_\alpha(\ell)\big| \\
% \lesssim~& |\ell|^{-2} + \big|\nabla_{\rho} U^0(\ell) + p^0_\beta(\ell) - p^0_\alpha(\ell)\big| \\
\lesssim~& |\ell|^{-2} + \big|\nabla_{\rho} U^0(\ell) + p^0_\beta(\ell) - p^0_\alpha(\ell)\big|.
\end{align*}
From~\cite{hirth1982theory}, we know that $|\nabla U^{\rm lin}| \lesssim~ |\ell|^{-1}$ and thus $|\nabla U^{0}| \lesssim |\ell|^{-1}$from Theorem~\ref{ml_thm_vs} which further implies $|\bm{p}^0(\ell)| \lesssim~ |\ell|^{-1}$ from \eqref{p_predictor_def}. Hence we have
\[
\big|e(\ell)\big| \lesssim~ |\ell|^{-1}.
\]
For~\eqref{third_diff}, we apply Taylor remainder estimates analogous to those proven in~\eqref{zero_diff} and~\eqref{first_diff}, and for brevity, we consider only the case $\ell \in \Omega_\Gamma$:
\begin{align*}
\big|\tilde{D}_{-\rho} e(\ell)\big| =~& \big| \tilde{D}_{-\rho}\big( RD_\rho S_0 U^0(\ell) + \tilde{D}_\rho p^0_\beta(\ell) + p_\beta^0(\ell) - p_\alpha^0(\ell)\big) \big| \\
=~& \big| RD_{-\rho}D_{\rho}S_0U^0(\ell) + RD_{-\rho}D_{\rho} S_0 p^0_\beta(\ell) + \tilde{D}_{-\rho}p_\beta^0(\ell) - \tilde{D}_{-\rho}p_\alpha^0(\ell) \big| \\
=~& \big| R\nabla^2_\rho S_0U^0(\ell) + \nabla_{-\rho} p_\beta^0(\ell) - \nabla_{-\rho}p_\alpha^0(\ell) \big| + \mathcal{O}(\|\nabla^3 S_0U^0\|_{L^\infty(B_{r_{\rm cut}}(\ell))}) \\
&\quad + \mathcal{O}(\|\nabla^2 p^0_\beta\|_{L^\infty(B_{r_{\rm cut}}(\ell))}) + \mathcal{O}(\|\nabla^2 p^0_\alpha\|_{L^\infty(B_{r_{\rm cut}}(\ell))})
\end{align*}

We may again utilize~\cite{hirth1982theory}, to see that $|\nabla^2 U^{\rm lin}| \lesssim |\ell|^{-2}$ and $|\nabla^3 U^{\rm lin}| \lesssim |\ell|^{-3}$  and thus $|\nabla^2 S_0U^{0}| \lesssim |\ell|^{-2}$ and $|\nabla^3 S_0U^{0}| \lesssim |\ell|^{-3}$ from~Theorem~\ref{ml_thm_vs}. Obtaining corresponding estimates on the shifts from \eqref{p_predictor_def} gives~\eqref{third_diff}.

\end{proof}

\subsection{Decay Estimate of the Residual Forces}

We then estimate the decay of the residual forces from the homogeneous energy when evaluated at the continuum dislocation predictor; these decay estimates will turn out to be vital in proving the decay estimates for the dislocation strain fields themselves. For ease of notation and visual clarity, throughout the proof, we will use the notation $\|\cdot\|_{L^\infty(\nu_{\ell})}$ to represent $\|\cdot\|_{L^\infty(B_{r_{\rm cut}+\mathsf{b}_1}(\ell))}$, and $\displaystyle\sum^\gamma$ will represent a summation over which $\gamma$ is held fixed. 

\begin{lemma}\label{at_force}
Suppose that $(U,\bm{p})$ are smooth. The force on an atomistic degree of freedom $(\ell,\gamma)$ for $\ell \in \Lambda$ and $\gamma \in \mathcal{S}:=\{0, \ldots, S-1\}$ is given by
\begin{equation}\label{at_force_eq}
\begin{split}
\frac{\partial \mathcal{E}_{\rm hom}^\a(\bm{u})}{\partial u_\gamma(\ell)}\Big|_{(U,\bm{p})} =~& \overbrace{\sum_{(\tau\iota\chi)} \sum_{(\rho\alpha\gamma)}^\gamma V_{,(\rho\alpha\gamma)(\tau\iota\chi)}(\bm{0}) \big(\nabla_{\tau,-\rho}^2 U(\ell) - \nabla_{\rho}p_\chi(\ell) + \nabla_{\rho} p_\iota(\ell) \big)}^{1} \\
&+~ \overbrace{\sum_{(\tau\iota\chi)} \Big(\sum_{(\rho\alpha\gamma)}^\gamma V_{,(\rho\alpha\gamma)(\tau\iota\chi)}(\bm{0}) -  \sum_{(\rho\gamma\alpha)}^\gamma V_{,(\rho\gamma\alpha)(\tau\iota\chi)}(\bm{0})\Big)\big( \nabla_\tau U(\ell) + p_\chi(\ell) - p_\iota(\ell)  \big)}^{2a} \\
&+~ \underbrace{\sum_{(\tau\iota\chi)} \Big(\sum_{(\rho\alpha\gamma)}^\gamma V_{,(\rho\alpha\gamma)(\tau\iota\chi)}(\bm{0}) -  \sum_{(\rho\gamma\alpha)}^\gamma V_{,(\rho\gamma\alpha)(\tau\iota\chi)}(\bm{0})\Big)\big(\frac{1}{2}\nabla^2_{\tau}U(\ell) + \nabla_\tau p_\chi(\ell)  \big)}_{2b}  \\
&+~ \mathcal{O}\big(\|\nabla^2 U\|_{L^\infty(\nu_{\ell})}\|\nabla u\|_{L^\infty(\nu_{\ell})} + \|\nabla \bm{p}\|_{L^\infty(\nu_{\ell})}\|\bm{p}\|_{L^\infty(\nu_{\ell})} + \|\nabla^3 U\|_{L^\infty(\nu_{\ell})} + \|\nabla^2 \bm{p}\|_{L^\infty(\nu_{\ell})} \big).
\end{split}
\end{equation}
In particular, if $(U, \bm{p}) = (U^0, \bm{p}^0)$ solved by \eqref{predictor_def} and \eqref{p_predictor_def}, for $\ell \notin \Omega_\Gamma$ satisfying $|\ell|$ large enough, then
\begin{equation}\label{at_force_lin}
\begin{split}
%\frac{\partial \mathcal{E}^\a}{\partial u_\gamma(\eta)}|_{(U,\bm{p})} =~& (1) + (2b) + \mathcal{O}\big(|\ell|^{-3}\big) \\
\sum_{\gamma} \frac{\partial \mathcal{E}^\a_{\rm hom}(\bm{u})}{\partial u_\gamma(\ell)}\Big|_{(U^0,\bm{p}^0)} =~& \mathcal{O}\big(|\ell|^{-3}\big),
\end{split}
\end{equation}
and for $\ell \in \Omega_\Gamma$, if $(U, \bm{p}) = (SU^0, S\bm{p}^0)$, then
\begin{equation}\label{at_force_lin_s}
\begin{split}
%\frac{\partial \mathcal{E}^\a}{\partial u_\gamma(\eta)}|_{(U,\bm{p})} =~& (1) + (2b) + \mathcal{O}\big(|\ell|^{-3}\big) \\
\sum_{\gamma} \frac{\partial \mathcal{E}^\a_{\rm hom}(\bm{u})}{\partial u_\gamma(\ell)}\Big|_{(SU^0, S\bm{p}^0)} =~& \mathcal{O}\big(|\ell|^{-3}\big).
\end{split}
\end{equation}
\end{lemma}

\begin{proof}
First, we establish the expression \eqref{at_force_eq}. From definition \eqref{energy_hom}, we observe that
\begin{align}\label{eq:ehom_uy}
\frac{\partial \mathcal{E}^\a_{\rm hom}(\bm{u})}{\partial u_\gamma(\eta)}\Big|_{(U,\bm{p})} =~& \sum_{(\rho\alpha\gamma)}^\gamma V_{,(\rho\alpha\gamma)}(D\bm{u}(\ell-\rho)) - \sum_{(\rho\gamma\alpha)}^\gamma V_{,(\rho\gamma\alpha)}(D\bm{u}(\ell)).
\end{align}
We then Taylor expand the right hand side of \eqref{eq:ehom_uy} at $\bm{0}$ to obtain
\begin{align}\label{taylor1}
&\frac{\partial \mathcal{E}^\a_{\rm hom}(\bm{u})}{\partial u_\gamma(\ell)}\Big|_{(U,\bm{p})} \nonumber \\
%&=~ \sum_{(\rho\alpha\gamma)}^\gamma V_{,(\rho\alpha\gamma)}(\bm{0}) + \sum_{(\rho\alpha\gamma)}^\gamma\sum_{(\tau\iota\chi)} V_{,(\rho\alpha\gamma)(\tau\iota\chi)}(\bm{0})\big(D_{(\tau\iota\chi)}\bm{u}(\eta-\rho)\big) \\
%&~-\sum_{(\rho\gamma\alpha)}^\gamma V_{,(\rho\gamma\alpha)}(\bm{0}) - \sum_{(\rho\gamma\alpha)}^\gamma\sum_{(\tau\iota\chi)} V_{,(\rho\gamma\alpha)(\tau\iota\chi)}(\bm{0})\big(D_{(\tau\iota\chi)}\bm{u}(\eta)\big) \\
%&~+ \mathcal{O}\big(\|\nabla^3 U\|_{L^\infty} + \|\nabla^2 \bm{p}\|_{L^\infty}\big)\\
&=~ \sum_{(\rho\alpha\gamma)}^\gamma V_{,(\rho\alpha\gamma)}(\bm{0}) -  \sum_{(\rho\gamma\alpha)}^\gamma V_{,(\rho\gamma\alpha)}(\bm{0}) \nonumber \\
&~+ \sum_{(\rho\alpha\gamma)}^\gamma\sum_{(\tau\iota\chi)} V_{,(\rho\alpha\gamma)(\tau\iota\chi)}(\bm{0})\big(D_{(\tau\iota\chi)}\bm{u}(\ell-\rho)\big) - \sum_{(\rho\gamma\alpha)}^\gamma\sum_{(\tau\iota\chi)} V_{,(\rho\gamma\alpha)(\tau\iota\chi)}(\bm{0})\big(D_{(\tau\iota\chi)}\bm{u}(\ell)\big) \nonumber \\
&~ +\frac{1}{2}\int_0^1 (1-t)^2\sum_{(\rho\alpha\gamma)}^\gamma\sum_{(\tau\iota\chi)}\sum_{(\sigma\mu\nu)} V_{,(\rho\alpha\gamma)(\tau\iota\chi)(\sigma\mu\nu)}(tD\bm{u})\big(D_{(\tau\iota\chi)}\bm{u}(\ell-\rho)\big)\big(D_{(\sigma\mu\nu)}\bm{u}(\ell-\rho)\big) \, dt  \nonumber \\
&~- \frac{1}{2}\int_0^1 (1-t)^2\sum_{(\rho\gamma\alpha)}^\gamma\sum_{(\tau\iota\chi)}\sum_{(\sigma\mu\nu)} V_{,(\rho\gamma\alpha)(\tau\iota\chi)(\sigma\mu\nu)}(tD\bm{u})\big(D_{(\tau\iota\chi)}\bm{u}(\ell)\big)\big(D_{(\sigma\mu\nu)}\bm{u}(\ell)\big) \, dt  \nonumber \\
&=~ \sum_{(\rho\alpha\gamma)}^\gamma\sum_{(\tau\iota\chi)} V_{,(\rho\alpha\gamma)(\tau\iota\chi)}(\bm{0})\big(D_{(\tau\iota\chi)}\bm{u}(\ell-\rho)\big) - \sum_{(\rho\gamma\alpha)}^\gamma\sum_{(\tau\iota\chi)} V_{,(\rho\gamma\alpha)(\tau\iota\chi)}(\bm{0})\big(D_{(\tau\iota\chi)}\bm{u}(\ell)\big) \nonumber \\
&~ + \mathcal{O}\big(\|\nabla^2 U\|_{L^\infty(\nu_{\ell})}\|\nabla u\|_{L^\infty(\nu_{\ell})} + \|\nabla \bm{p}\|_{L^\infty(\nu_{\ell})}\|\bm{p}\|_{L^\infty(\nu_{\ell})}\big),
\end{align}
where in obtaining the last line we have used that $\partial p_\gamma W(0, {\bm 0}) = 0$ in the equilibrated reference configuration, which implies $0 = \sum_{(\rho\alpha\gamma)}^\gamma V_{,(\rho\alpha\gamma)}(\bm{0}) -  \sum_{(\rho\gamma\alpha)}^\gamma V_{,(\rho\gamma\alpha)}(\bm{0})$~\cite[A.4]{olsonOrtner2016}, and we Taylor expand the finite differences in the remainder term. Next, we rewrite \eqref{taylor1} as
\begin{align}\label{taylor2}
&\frac{\partial \mathcal{E}^\a_{\rm hom}(\bm{u})}{\partial u_\gamma(\ell)}\Big|_{(U,\bm{p})} \nonumber \\
&=~ \sum_{(\tau\iota\chi)} \Big( \sum_{(\rho\alpha\gamma)}^\gamma V_{,(\rho\alpha\gamma)(\tau\iota\chi)}(\bm{0})\big(D_{(\tau\iota\chi)}\bm{u}(\ell-\rho) - D_{(\tau\iota\chi)}\bm{u}(\ell)  \big)  + \sum_{(\rho\alpha\gamma)}^\gamma V_{,(\rho\alpha\gamma)(\tau\iota\chi)}(\bm{0})D_{(\tau\iota\chi)}\bm{u}(\ell)  \nonumber \\
&~- \sum_{(\rho\gamma\alpha)}^\gamma V_{,(\rho\gamma\alpha)(\tau\iota\chi)}(\bm{0}) D_{(\tau\iota\chi)}\bm{u}(\ell) 
 \Big) + \mathcal{O}\big(\|\nabla^2 U\|_{L^\infty(\nu_\ell)}\|\nabla u\|_{L^\infty(\nu_\ell)} + \|\nabla \bm{p}\|_{L^\infty(\nu_\ell)}\|\bm{p}\|_{L^\infty(\nu_\ell)}\big) \nonumber \\
&=~ \overbrace{\sum_{(\tau\iota\chi)} \Big( \sum_{(\rho\alpha\gamma)}^\gamma V_{,(\rho\alpha\gamma)(\tau\iota\chi)}(\bm{0})\big(D_{(\tau\iota\chi)}\bm{u}(\ell-\rho) - D_{(\tau\iota\chi)}\bm{u}(\ell)  \big) \Big)}^{A_1} \nonumber \\
&\qquad+~ \underbrace{\sum_{(\tau\iota\chi)} \Big(\sum_{(\rho\alpha\gamma)}^\gamma V_{,(\rho\alpha\gamma)(\tau\iota\chi)}(\bm{0}) -  \sum_{(\rho\gamma\alpha)}^\gamma V_{,(\rho\gamma\alpha)(\tau\iota\chi)}(\bm{0})\Big)D_{(\tau\iota\chi)}\bm{u}(\ell) }_{A_2} \nonumber \\
&\qquad\qquad+~ \mathcal{O}\big(\|\nabla^2 U\|_{L^\infty(\nu_\ell)}\|\nabla u\|_{L^\infty(\nu_\ell)} + \|\nabla \bm{p}\|_{L^\infty(\nu_\ell)}\|\bm{p}\|_{L^\infty(\nu_\ell)}\big) .
\end{align}
We then evaluate term $A_1$ and then Taylor expand the finite difference (keeping only the lowest order error terms) to produce
\begin{align}\label{taylor25}
A_1 %=~ \sum_{(\tau\iota\chi)} \Big\{ \sum_{(\rho\alpha\gamma)}^\gamma V_{,(\rho\alpha\gamma)(\tau\iota\chi)}(\bm{0})\big(D_{(\tau\iota\chi)}\bm{u}(\eta-\rho) - D_{(\tau\iota\chi)}\bm{u}(\eta)  \big) \Big\} \\
&=~ \sum_{(\tau\iota\chi)} \sum_{(\rho\alpha\gamma)}^\gamma V_{,(\rho\alpha\gamma)(\tau\iota\chi)}(\bm{0}) \Big( D_{-\rho} \big(U(\ell+\tau) - U(\eta) + p_\chi(\ell+\tau) - p_\iota(\ell) \big) \Big) \nonumber \\
&=~ \sum_{(\tau\iota\chi)} \sum_{(\rho\alpha\gamma)}^\gamma V_{,(\rho\alpha\gamma)(\tau\iota\chi)}(\bm{0}) \Big(D_{-\rho} \big(\nabla_\tau U(\ell) + p_\chi(\ell+\tau) - p_\chi(\ell) + p_\chi(\ell) - p_\iota(\ell) \big) \Big) \nonumber \\
&\qquad + \mathcal{O}\big(\|\nabla^3 U\|_{L^\infty(\nu_{\ell})} \big) \nonumber \\
&=~ \sum_{(\tau\iota\chi)} \sum_{(\rho\alpha\gamma)}^\gamma V_{,(\rho\alpha\gamma)(\tau\iota\chi)}(\bm{0}) \Big(D_{-\rho} \big(\nabla_\tau U(\ell) + \nabla_\tau p_\chi(\ell) + p_\chi(\ell) - p_\iota(\ell) \big) \Big) \nonumber \\
&\qquad + \mathcal{O}\big(\|\nabla^3 U\|_{L^\infty(\nu_{\ell})} + \|\nabla^2 \bm{p}\|_{L^\infty(\nu_{\ell})}\big) \nonumber \\
&=~ \sum_{(\tau\iota\chi)} \sum_{(\rho\alpha\gamma)}^\gamma V_{,(\rho\alpha\gamma)(\tau\iota\chi)}(\bm{0}) \Big( D_{-\rho} \big(\nabla_\tau U(\ell) + p_\chi(\eta) - p_\iota(\ell) \big) \Big) + \mathcal{O}\big(\|\nabla^3 U\|_{L^\infty(\nu_{\ell})} + \|\nabla^2 \bm{p}\|_{L^\infty(\nu_{\ell})}\big) \nonumber \\
&=~ \sum_{(\tau\iota\chi)} \sum_{(\rho\alpha\gamma)}^\gamma V_{,(\rho\alpha\gamma)(\tau\iota\chi)}(\bm{0}) \big( \nabla_{\tau,-\rho}^2 U(\ell) - \nabla_{\rho}p_\chi(\ell) + \nabla_{\rho} p_\iota(\ell) \big) + \mathcal{O}\big(\|\nabla^3 U\|_{L^\infty(\nu_{\ell})} + \|\nabla^2 \bm{p}\|_{L^\infty(\nu_{\ell})}\big).
\end{align}
As for the term $A_2$, we have
\begin{align}\label{taylor3}
&A_2 =~ \sum_{(\tau\iota\chi)} \Big(\sum_{(\rho\alpha\gamma)}^\gamma V_{,(\rho\alpha\gamma)(\tau\iota\chi)}(\bm{0}) -  \sum_{(\rho\gamma\alpha)}^\gamma V_{,(\rho\gamma\alpha)(\tau\iota\chi)}(\bm{0})\Big)D_{(\tau\iota\chi)}\bm{u}(\ell)  \nonumber \\
&=~ \sum_{(\tau\iota\chi)} \Big(\sum_{(\rho\alpha\gamma)}^\gamma V_{,(\rho\alpha\gamma)(\tau\iota\chi)}(\bm{0}) -  \sum_{(\rho\gamma\alpha)}^\gamma V_{,(\rho\gamma\alpha)(\tau\iota\chi)}(\bm{0})\Big)\big(U(\ell+\tau) - U(\ell) + p_\chi(\ell+\tau) - p_\iota(\ell)  \big) \nonumber  \\
&=~ \sum_{(\tau\iota\chi)} \Big(\sum_{(\rho\alpha\gamma)}^\gamma V_{,(\rho\alpha\gamma)(\tau\iota\chi)}(\bm{0}) -  \sum_{(\rho\gamma\alpha)}^\gamma V_{,(\rho\gamma\alpha)(\tau\iota\chi)}(\bm{0})\Big)\big( \nabla_\tau U(\ell) + \frac{1}{2}\nabla^2_{\tau}U(\ell) + p_\chi(\ell+\tau) - p_\chi(\ell) \nonumber  \\
&\qquad\qquad\qquad\qquad\qquad\qquad\qquad\qquad\qquad\qquad\qquad\qquad+ p_\chi(\ell) - p_\iota(\ell)  \big) + \mathcal{O}\big(\|\nabla^3 U\|_{L^\infty(\nu_{\ell})} \big)\nonumber \\
&=~ \sum_{(\tau\iota\chi)} \Big( \sum_{(\rho\alpha\gamma)}^\gamma V_{,(\rho\alpha\gamma)(\tau\iota\chi)}(\bm{0}) -  \sum_{(\rho\gamma\alpha)}^\gamma V_{,(\rho\gamma\alpha)(\tau\iota\chi)}(\bm{0})\Big) \big( \nabla_\tau U(\ell) + \frac{1}{2}\nabla^2_{\tau}U(\ell) + \nabla_\tau p_\chi(\ell)\nonumber  \\
&\qquad\qquad\qquad\qquad\qquad\qquad\qquad\qquad\qquad+ p_\chi(\ell) - p_\iota(\ell)  \big) + \mathcal{O}\big(\|\nabla^3 U\|_{L^\infty} + \|\nabla^2 \bm{p}\|_{L^\infty}\big) \nonumber \\
&=~ \overbrace{\sum_{(\tau\iota\chi)} \Big(\sum_{(\rho\alpha\gamma)}^\gamma V_{,(\rho\alpha\gamma)(\tau\iota\chi)}(\bm{0}) -  \sum_{(\rho\gamma\alpha)}^\gamma V_{,(\rho\gamma\alpha)(\tau\iota\chi)}(\bm{0})\Big)\big( \nabla_\tau U(\ell) + p_\chi(\ell) - p_\iota(\ell)  \big)}^{A_{21}} \nonumber \\
&\qquad +~ \underbrace{\sum_{(\tau\iota\chi)} \Big( \sum_{(\rho\alpha\gamma)}^\gamma V_{,(\rho\alpha\gamma)(\tau\iota\chi)}(\bm{0}) -  \sum_{(\rho\gamma\alpha)}^\gamma V_{,(\rho\gamma\alpha)(\tau\iota\chi)}(\bm{0})\Big) \big(\frac{1}{2}\nabla^2_{\tau}U(\ell) + \nabla_\tau p_\chi(\ell)  \big)}_{A_{22}} \nonumber \\
& \qquad\qquad+ \mathcal{O}\big(\|\nabla^3 U\|_{L^\infty(\nu_\ell)} + \|\nabla^2 \bm{p}\|_{L^\infty(\nu_\ell)}\big).
\end{align}
Inserting the expressions~\eqref{taylor25} and~\eqref{taylor3} into~\eqref{taylor2} yields the desired expression for $\frac{\partial \mathcal{E}^\a_{\rm hom}(\bm{u})}{\partial u_\gamma(\ell)}$ in~\eqref{at_force_eq}.

To establish \eqref{at_force_lin}, we observe that from Appendix \ref{app:cb_linear}, if $(U^0, \bm{p}^0)$ is a solution to the linear elastic Cauchy-Born model, then $(U^0, \bm{p}^0)$ also solves the mixed variational equation
\begin{align}\label{lin_cb_weak}
%&\<\delta^2 \mathcal{E}^c(\bm{0})(U,\bm{p}),(V,\bm{q}) \> \\
&\<\partial_{\mF \mF}W(0, \bm{0})\nabla U, \nabla V \> + \sum_{\nu} \< \partial_{\mF p_\nu}W(0, \bm{0}) \nabla U, q_\nu \> + \sum_{\mu}\< \partial_{ p_\mu \mF}W(0, \bm{0}) p_\mu, \nabla V \> \nonumber \\
&\qquad+ \sum_{\mu,\nu}\< \partial_{p_\mu p_\nu}W(0, \bm{0}) p_\mu, q_\nu \> = 0, \quad \forall (V,\bm{q}) \in C^\infty_0.
\end{align}
The strong form of \eqref{lin_cb_weak} reads
\begin{align}\label{lin_cb_strong}
\nabla \cdot \big(\partial_{\mF \mF}W(0, \bm{0})\nabla U\big) + \sum_{\mu} \nabla \cdot\big(\partial_{ p_\mu \mF}W(0, \bm{0}) p_\mu\big) =~& 0, \nonumber \\
\partial_{\mF p_\nu}W(0, \bm{0}) \nabla U + \sum_{\mu } \partial_{p_\mu p_\nu}W(0, \bm{0}) p_\mu =~& 0, \qquad \mbox{for each $\nu\in\mathcal{S}$.}
\end{align}
It is then a simple calculus exercise to compute (see also~\cite{olsonOrtner2016} for the same expressions)
\begin{align}\label{deriv_express}
\partial^2_{\mF_{mn} \bm{p}_\mu^l} W(0, \bm{0}) =~& \sum_{(\rho\alpha\mu)}^\mu \sum_{\tripleIota} V_{,(\rho\alpha\mu)\tripleIota}^{lm}(\bm{0})\tau_n - \sum_{(\rho\mu\alpha)}^\mu \sum_{\tripleIota} V_{,(\rho\mu\alpha)\tripleIota}^{lm}(\bm{0})\tau_n, \nonumber \\
\partial^2_{\mF_{mn} \mF_{rs}}W(0, \bm{0}) =~& \sum_{(\rho\alpha\mu)} \sum_{\tripleIota} V_{,(\rho\alpha\mu)\tripleIota}^{mr}(\bm{0})\rho_n\tau_s, \nonumber \\
\partial^2_{\bm{p}_\mu^k\bm{p}_\gamma^l} W(0, \bm{0}) =~& \sum_{(\tau\iota\mu) }^\mu \sum_{(\rho\alpha\gamma) }^\gamma V^{kl}_{,(\rho\alpha\gamma)(\tau\iota\mu) }( \bm{0} ) - \sum_{(\tau\iota\mu)}^\mu \sum_{(\rho\gamma\alpha)}^\gamma V^{kl}_{,(\rho\gamma\alpha)(\tau\iota\mu)}(\bm{0} ) \nonumber \\
&\quad +  \sum_{(\tau\mu\iota)}^\mu \sum_{(\rho\gamma\alpha)}^\gamma V^{kl}_{,(\rho\gamma\alpha)(\tau\mu\iota) }( \bm{0} ) - \sum_{(\tau\mu\iota)}^\mu \sum_{(\rho\alpha\gamma)}^\gamma V^{kl}_{,(\rho\alpha\gamma)(\tau\mu\iota)}(\bm{0}),
\end{align}
and therefore
\begin{align}\label{div1}
-\big[\nabla \cdot \big(\partial_{\mF\mF}W(0, \bm{0}) \nabla U\big)\big]_{m} =~& -\sum_{(\rho\alpha\mu)}\sum_{\tripleIota} V_{,(\rho\alpha\mu)\tripleIota}^{mr}(\bm{0}) \rho_n\tau_s u_{r,sn} \nonumber \\
=~& \sum_{(\rho\alpha\mu)}\sum_{\tripleIota} V_{,(\rho\alpha\mu)\tripleIota}^{mr}(\bm{0}) \nabla^2_{-\rho\tau}u_r,
\end{align}
and
\begin{align}\label{div2}
&\big[\nabla \cdot \big(\partial_{\mF p_\mu}W(0, \bm{0}) p_\mu\big)\big]_{m} \nonumber \\
&=~ \sum_{(\rho\alpha\mu) }\sum_{\tripleIota} V_{,(\rho\alpha\mu)\tripleIota}^{lm}(\bm{0})\tau_np_{\mu,n}^{l} - \sum_{(\rho\mu\alpha) }\sum_{\tripleIota} V_{,(\rho\mu\alpha)\tripleIota}^{lm}(\bm{0})\tau_np_{\mu,n}^{l} \nonumber \\
&=~ \sum_{(\rho\alpha\mu) }\sum_{\tripleIota} V_{,(\rho\alpha\mu)\tripleIota}^{lm}(\bm{0})(\nabla_\tau p_{\mu})^{l} - \sum_{(\rho\mu\alpha) }\sum_{\tripleIota} V_{,(\rho\mu\alpha)\tripleIota}^{lm}(\bm{0})(\nabla_\tau p_{\mu})^{l} \nonumber \\
&=~ \sum_{(\tau\iota\chi) }\sum_{(\rho\alpha\mu)} V_{,(\tau\iota\chi)(\rho\alpha\mu)}^{lm}(\bm{0})(\nabla_\rho p_{\chi})^{l} - \sum_{(\tau\chi\iota) }\sum_{(\rho\alpha\mu)} V_{,(\tau\chi\iota)(\rho\alpha\mu)}^{lm}(\bm{0})(\nabla_\rho p_{\chi})^{l}
\end{align}
where in obtaining the last line we have merely relabeled $\tau$ to $\rho$, $\iota$ to $\alpha$ and $\chi$ to $\mu$.
We use~\eqref{div1} and~\eqref{div2} to compute the summation over $\gamma$ of {\textbf{(1)}} in~\eqref{at_force_eq}, which is valid since $U^{\rm lin}$ is smooth in this region:
\begin{align}\label{sum1}
&\sum_{(\tau\iota\chi)} \sum_{(\rho\alpha\gamma)} V_{,(\rho\alpha\gamma)(\tau\iota\chi)}(\bm{0}) \big(\nabla_{\tau,-\rho}^2 U(\ell) - \nabla_{\rho}p_\chi(\ell) + \nabla_{\rho} p_\iota(\ell) \big) \nonumber \\
&= -\nabla \cdot \big(\partial_{\mF\mF}W(0, \bm{0}) \nabla U\big) + \sum_{(\tau\iota\chi)} \sum_{(\rho\alpha\gamma)} V_{,(\tau\iota\chi)(\rho\alpha\gamma)}(\bm{0})\big(- \nabla_{\rho}p_\chi(\ell) + \nabla_{\rho} p_\iota(\ell) \big)  \, \mbox{by Clairaut's Thm.} \nonumber \\
&= -\nabla \cdot \big(\partial_{\mF\mF}W(0, \bm{0}) \nabla U\big) - \sum_\mu \nabla \cdot \big(\partial_{\mF p_\mu}W(0, \bm{0}) p_\mu\big) = 0 \quad \mbox{by~\eqref{lin_cb_strong}}.
\end{align}
When summing over $\gamma$ in terms {\textbf{(2a)}} and {\textbf{(2b)}} of~\eqref{at_force_eq}, the terms in braces disappear, which when combined with~\eqref{sum1} yields the second component of~\eqref{at_force_lin}.  Moreover, even if $\gamma$ is not summed over, then term {\textbf{(2a)}} vanishes at the linear elastic solution due to the second constraint of~\eqref{lin_cb_strong} and the expressions~\eqref{deriv_express}. Indeed, we observe that
\begin{align}\label{eq:2a}
{\textbf{(2a)}} =& \sum_{(\tau\iota\chi)} \Big(\sum_{(\rho\alpha\gamma)}^\gamma V_{,(\rho\alpha\gamma)(\tau\iota\chi)}(\bm{0}) -  \sum_{(\rho\gamma\alpha)}^\gamma V_{,(\rho\gamma\alpha)(\tau\iota\chi)}(\bm{0})\Big)\big( \nabla_\tau U(\ell) + p_\chi(\ell) - p_\iota(\ell)  \big) \nonumber \\
=& \sum_{(\rho\alpha\gamma)}^\gamma\sum_{\tripleIota} V_{,(\rho\alpha\mu)\tripleIota}(\bm{0})\nabla_\tau U(\ell) - \sum_{(\rho\gamma\alpha)}^\gamma\sum_{\tripleIota} V_{,(\rho\gamma\alpha)\tripleIota}(\bm{0})\nabla_\tau U(\ell) \nonumber \\
&+ \sum_{(\tau\iota\chi)} \Big(\sum_{(\rho\alpha\gamma)}^\gamma V_{,(\rho\alpha\gamma)(\tau\iota\chi)}(\bm{0}) -  \sum_{(\rho\gamma\alpha)}^\gamma V_{,(\rho\gamma\alpha)(\tau\iota\chi)}(\bm{0})\Big)\big( p_\chi(\ell) - p_\iota(\ell)  \big) \nonumber \\
=&~ \partial_{p_\mu \mF} W(0, \bm{0}) + T_{\rm 2a},
\end{align}
where the last line follows from the identity
\begin{align}\label{nonDiv1}
\big[\partial_{p_\mu \mF} W(0, \bm{0}) \nabla U\big]_{l}= \sum_{(\rho\alpha\mu)}^\mu\sum_{\tripleIota} V_{,(\rho\alpha\mu)\tripleIota}^{lm}(\bm{0})(\nabla_\tau U)^m - \sum_{(\rho\mu\alpha)}^\mu\sum_{\tripleIota} V_{,(\rho\mu\alpha)\tripleIota}^{lm}(\bm{0})(\nabla_\tau U)^m.
\end{align}
As for the term $T_{\rm 2a}$, we have
\begin{align}\label{nonDiv2}
T_{\rm 2a} &=~ \sum_{(\tau\iota\chi)} \Big(\sum_{(\rho\alpha\gamma)}^\gamma V_{,(\rho\alpha\gamma)(\tau\iota\chi)}(\bm{0}) -  \sum_{(\rho\gamma\alpha)}^\gamma V_{,(\rho\gamma\alpha)(\tau\iota\chi)}(\bm{0})\Big)\big( p_\chi(\ell) - p_\iota(\ell)  \big) \nonumber\\
&=~ \sum_{(\tau\iota\chi)} \sum_{(\rho\alpha\gamma)}^\gamma V_{,(\rho\alpha\gamma)(\tau\iota\chi)}(\bm{0}) p_\chi(\ell) -  \sum_{(\tau\iota\chi)}\sum_{(\rho\gamma\alpha)}^\gamma V_{,(\rho\gamma\alpha)(\tau\iota\chi)}(\bm{0}) p_\chi(\ell)  \nonumber\\
&\quad-  \sum_{(\tau\iota\chi)} \sum_{(\rho\alpha\gamma)}^\gamma V_{,(\rho\alpha\gamma)(\tau\iota\chi)}(\bm{0}) p_\iota(\ell) +  \sum_{(\tau\iota\chi)}\sum_{(\rho\gamma\alpha)}^\gamma V_{,(\rho\gamma\alpha)(\tau\iota\chi)}(\bm{0}) p_\iota(\ell) \nonumber\\
&=~ \sum_{(\tau\iota\chi)} \sum_{(\rho\alpha\gamma)}^\gamma V_{,(\rho\alpha\gamma)(\tau\iota\chi)}(\bm{0}) p_\chi(\ell) -  \sum_{(\tau\iota\chi)}\sum_{(\rho\gamma\alpha)}^\gamma V_{,(\rho\gamma\alpha)(\tau\iota\chi)}(\bm{0}) p_\chi(\ell)  \nonumber\\
&\quad-  \sum_{(\tau\chi\iota)} \sum_{(\rho\alpha\gamma)}^\gamma V_{,(\rho\alpha\gamma)(\tau\chi\iota)}(\bm{0}) p_\chi(\ell) +  \sum_{(\tau\chi\iota)}\sum_{(\rho\gamma\alpha)}^\gamma V_{,(\rho\gamma\alpha)(\tau\chi\iota)}(\bm{0}) p_\iota(\ell)  \,\, \mbox{relabeling $\chi$ to $\iota$ and $\iota$ to $\chi$} \nonumber \\
&=~ \sum_\chi \partial_{\bm{p}_\chi \bm{p}_\gamma}W(0, \bm{0}) p_\chi.
\end{align}
Hence, we can obtain
\begin{align*}
{\textbf{(2a)}} = \partial_{p_\mu \mF} W(0, \bm{0}) + \sum_\chi \partial_{\bm{p}_\chi \bm{p}_\gamma}W(0, \bm{0}) p_\chi = 0 \quad \mbox{by~\eqref{lin_cb_strong}}.
\end{align*}
We may now replace the linear elastic solution with $(U^0, \bm{p}^0)$ and make an at most $|\ell|^{-3}$ error according to Lemma~\ref{diff_grad} in $\textbf{(1)}$ and $\textbf{(2b)}$ which yields~\eqref{at_force_lin} when combined with decay estimates for the remainder terms used previously in Lemma~\ref{diff_grad}.  Moreover, even though $S\bm{p}^0$ is not smooth, we are still able to use Taylor expansions of $\tilde{D}_\rho p^0$ just as we did in Lemma~\ref{diff_grad} so that we may replace $(U^0, \bm{p}^0)$ with $(SU^0, S\bm{p}^0)$ to obtain~\eqref{at_force_lin_s}. 
\end{proof}

\subsection{Proof of Theorem \ref{energy_thm}}
\label{sec:pf:energy}

%\begin{proof}[Proof of Theorem~\ref{energy_thm}]
We essentially adapt the proof of \cite[Lemma 3.2]{Ehrlacher2013} to the setting of multilattice.  Namely, we recall the definition of the atomistic energy as
\begin{align*}
\mathcal{E}^\a(\bm{u}) =~& \sum_{\ell \in \Lambda} V_{\ell}(D\bm{u}(\ell)),
\end{align*}
and note that if $\bm{u}$ belongs to the space $\bm{\mathcal{U}}_0$, then
\begin{align*}
\<\delta \mathcal{E}^\a(\bm{0}),\bm{u}\> = \frac{d}{dt}\big[ \mathcal{E}^\a(\bm{0}+ t\bm{u})\big]\Big|_{t = 0} = \sum_{\ell \in \Lambda} \< \delta V_\ell(\bm{0}), D\bm{u}(\ell) \>,
\end{align*}
as this becomes a finite sum (and hence we can differentiate term-by-term), and thus
\[
\mathcal{E}^\a(\bm{u}) = \sum_{\ell \in \Lambda}\Big[ V_{\ell}\big(D\bm{u}(\ell)\big) - \< \delta V_\ell(\bm{0}), D\bm{u}(\ell) \>\Big] + \<\delta \mathcal{E}^\a(\bm{0}),\bm{u}\>,
\]
for $\bm{u} \in \bm{\mathcal{U}}_0$. If we can show that (1) $ 
\sum_{\ell \in \Lambda}\big[ V_{\ell}(D\bm{u}(\ell)) - \< \delta V_\ell(\bm{0}), D\bm{u}(\ell) \>\big]$ is well-defined for displacements having finite energy and is differentiable and (2) $
\<\delta \mathcal{E}^\a(\bm{0}),\bm{u}\>$ is a bounded linear functional, then we will have that $\mathcal{E}^\a(\bm{u})$ agrees with a $C^k$ functional on the dense subset, $\bm{\mathcal{U}}_0$ of $\bm{\mathcal{U}}$, and hence may be uniquely extended to a $C^k$ functional on $\mathcal{A}$.

Showing that the function
\[
\sum_{\ell \in \Lambda}\Big[ V_{\ell}\big(D\bm{u}(\ell)\big) - \< \delta V_\ell(\bm{0}), D\bm{u}(\ell) \>\Big]
\]
is well defined and continuously differentiable can be done verbatim to~\cite[Theorem 2.1]{olsonOrtner2016} by simply replacing the homogeneous site potential, $V$, in that work with $V_\ell$ in the present work so we omit the details.

To show that $\<\delta \mathcal{E}^\a(\bm{0}),\bm{u}\> = \sum_{\ell \in \Lambda} \< \delta V_\ell(\bm{0}), D\bm{u}(\ell)\>$ is a bounded functional, we recall first the identity~\eqref{slip_fd_id}
\[
\tilde{D}\bm{u}(\ell) = \tilde{D}_\rho u_0 + \tilde{D}_\rho p_\beta(\ell) + p_\beta(\ell) - p_\alpha(\ell).
\]
Using this identity and the definition of $e(\ell)$ from~\eqref{ml_strain}, we have
\begin{align}
\sum_{\ell \in \Lambda} \< \delta V_\ell(\bm{0}), D\bm{u}(\ell)\> =~& \sum_{\ell \in \Lambda} \< \delta V(e(\ell)), \tilde{D}\bm{u}(\ell)\> \nonumber \\
=~&\sum_{\ell \in \Lambda}  \sum_{\triple}V_{, {\triple}} \big(e(\ell)\big)\cdot \tilde{D}_{\triple}\bm{u}(\ell) \nonumber \\
=~& \sum_{\ell \in \Lambda}  \sum_{\triple}V_{, {\triple}} \big(e(\ell)\big)\cdot\big(\tilde{D}_\rho u_0(\ell) + \tilde{D}_\rho p_\beta(\ell) + p_\beta(\ell) - p_\alpha(\ell)\big) \nonumber \\
=~& \overbrace{\sum_{\ell \in \Lambda}  \sum_{\triple}V_{, {\triple}} \big(e(\ell)\big)\cdot\big(\tilde{D}_\rho u_0(\ell) + \tilde{D}_\rho p_\beta(\ell) \big)}^{T_1} \nonumber \\
&~\qquad+ \underbrace{\sum_{\ell \in \Lambda}  \sum_{\triple}V_{, {\triple}} \big(e(\ell)\big)\cdot\big(p_\beta(\ell) - p_\alpha(\ell)\big)}_{T_2}.
\end{align}
For term $T_2$, we have
\begin{align}
T_2 =~&  \sum_{\ell\in\Lambda}\sum_{\triple}V_{, \triple} \big(e(\ell)\big)\cdot\big(p_\beta(\ell) - p_\alpha(\ell)\big) \nonumber \\
=~& \sum_{\ell\in\Lambda}\sum_{\triple}\Big(V_{, \triple} \big(e(\ell)\big) - V_{, {\triple}}(\bm{0}) + V_{, {\triple}}(\bm{0})\Big)\cdot\big( p_\beta(\ell) - p_\alpha(\ell)\big) \nonumber \\
=~& \sum_{\ell\in\Lambda}\sum_{\triple} \Big(V_{, \triple} \big(e(\ell)\big) - V_{\triple}(\bm{0})\Big)\cdot\big(p_\beta(\ell) - p_\alpha(\ell)\big) \nonumber \\
&\qquad + \sum_{\ell\in\Lambda}\sum_{\triple}\Big( V_{\triple}(\bm{0}) - V_{, (\rho\beta\alpha)}(\bm{0})\Big)\cdot p_\beta(\ell) \nonumber \\
=~& \sum_{\ell\in\Lambda}\sum_{\triple}\Big( V_{, \triple} \big(e(\ell)\big) - V_{, \triple}(\bm{0})\Big)\cdot\big(p_\beta(\ell) - p_\alpha(\ell)\big)  \qquad \mbox{by~\cite[Appendix A.4 ]{olsonOrtner2016}}\nonumber  \\
=~& \sum_{\ell\in\Lambda}\sum_{\triple,\tripleTau}\Big( V_{, \triple\tripleTau} \big(\bm{0}\big) e_{\tripleTau}(\ell) + \mathcal{O}(\|e_{\tripleTau}\|_{L^\infty}^2)\Big)\cdot\big(p_\beta(\ell) - p_\alpha(\ell)\big) \nonumber  \\
=~& \sum_{\ell\in\Lambda}\sum_{\tripleTau}\Big(\sum_{\triple}V_{\triple\tripleTau} \big(\bm{0}\big)- \sum_{(\rho\beta\alpha)}V_{(\rho\beta\alpha)\tripleTau} \big(\bm{0}\big) +\mathcal{O}(|\ell|^{-2})\Big)e_{\tripleTau}(\ell) \cdot p_\beta(\ell) \nonumber \\
=~& \sum_{\ell\in\Lambda}\sum_{\tripleTau}\Big(\sum_{\triple}V_{\triple\tripleTau} \big(\bm{0}\big)- \sum_{(\rho\beta\alpha)}V_{(\rho\beta\alpha)\tripleTau} \big(\bm{0}\big) +\mathcal{O}(|\ell|^{-2}) \Big)\big(\nabla_\tau U^0(\ell) + p_\delta(\ell) - p_\gamma(\ell) \nonumber  \\
&\hspace{3.7in}+ \mathcal{O}(|\ell|^{-2})\big)  \cdot p_\beta(\ell),
\end{align}
where we have used Lemma~\ref{diff_grad} in obtaining the final line.  Now recall that the term labeled as (2a) in Lemma~\ref{at_force} was shown to vanish in the proof of that Lemma. Moreover, this is exactly the term immediately above up to an  $\mathcal{O}(|\ell|^{-2})$ error. This further implies $T_2$ is summable so that
%\begin{align*}
%|T_2| \lesssim~& \sum_{\ell\in\Lambda}\sum_{\triple}|e_{\tau\delta\gamma}(\ell)|^2 \big|p_\beta(\ell) - p_\alpha(\ell)\big|.
%\end{align*}
%Using again Lemma~\ref{diff_grad} to bound $|e_{\tau\delta\gamma}(\ell)| \lesssim~ |\ell|^{-1}$, we then find
\[
|T_2| \lesssim~ \|\bm{p}\|_{\ell^2} \lesssim~ \|\bm{u}\|_{\a_1}.
\]
For term $T_1$, we utilize~\cite[Lemma 5.7]{Ehrlacher2013} which allows to write ``integrate $\tilde{D}$ by parts:''
\begin{equation}\label{t1}
\begin{split}
T_1 &=~\sum_{\ell \in \Lambda}  \sum_{\triple}V_{, \triple} \big(e(\ell)\big)\cdot\big(\tilde{D}_\rho u_0(\ell) + \tilde{D}_\rho p_\beta(\ell) \big) \\
&=~ \sum_{\ell \in \Lambda}  \sum_{\triple}\tilde{D}_{-\rho}\big(V_{, \triple} \big(e(\ell)\big)\big)\cdot\big( u_0(\ell) + p_\beta(\ell) \big) \\
&=~ \sum_{\ell \in \Lambda}  \sum_{\triple}\tilde{D}_{-\rho}\big(V_{, \triple} \big(e(\ell)\big)\big)\cdot u_0(\ell) + \sum_{\ell \in \Lambda}  \sum_{\triple}\tilde{D}_{-\rho}\big(V_{, \triple} \big(e(\ell)\big)\big)\cdot p_\beta(\ell) . %=  \sum_{\ell \in \Lambda}  \sum_{\triple}-\tilde{D}_{\rho}\big(V_{\triple} \big(e(\ell)\big)\big)\cdot\big[u_\beta(\ell) \big]
\end{split}
\end{equation}
Next, observe that by Lemma~\ref{at_force}, if $\ell \notin \Omega_\Gamma$,
\begin{equation}\label{force_elastic1}
\Big|\sum_{\triple}\tilde{D}_{-\rho}\big(V_{, \triple} \big(e(\ell)\big)\big)\Big|  = \Big|\sum_{\triple}D_{-\rho}\big(V_{, \triple} \big(D\bm{u}^0\big)\big)\Big| = \Big|\sum_{\gamma}\frac{\partial \mathcal{E}^\a_{\rm hom}(\bm{u})}{\partial u_\gamma(\ell)}\Big|_{(U^0,\bm{p}^0)}\Big| \lesssim~ |\ell|^{-3} ,
\end{equation}
and if $\ell \in \Omega_\Gamma$,
\begin{equation}\label{force_elastic2}
\begin{split}
\Big|\sum_{\triple}\tilde{D}_{-\rho}\big(V_{, \triple} \big(e(\ell)\big)\big)\Big|  =~& \Big|\sum_{\triple}RD_{-\rho}S\big(V_{, \triple} \big(RDS_0\bm{u}^0\big)\big)\Big| \nonumber  \\
=~& \Big|\sum_{\triple}RD_{-\rho}SR\big(V_{, \triple} \big(DS_0\bm{u}^0\big)\big)\Big| \nonumber  \\
=~& \Big|\sum_{\triple}RD_{-\rho}\big(V_{, \triple} \big(DS_0\bm{u}^0\big)\big)\Big| \nonumber  \\
=~& \Big|\sum_{\gamma}\frac{\partial \mathcal{E}^\a_{\rm hom}(\bm{u})}{\partial u_\gamma(\ell)}\Big|_{(S_0U^0,S\bm{p}^0)}\Big| \lesssim~ |\ell|^{-3}.
\end{split}
\end{equation}

According to \eqref{second_diff} and \eqref{third_diff}, we have 
\begin{align}\label{force_elastic3}
&\Big|\tilde{D}_{-\rho}\big(V_{, \triple} \big(e(\ell)\big)\big)\Big| \nonumber \\
\leq~& \Big|\tilde{D}_{-\rho}\big(V_{, \triple} (\bm{0})\big)\Big| + \Big|\tilde{D}_{-\rho}\big(\sum_{\tripleTau} V_{, \triple\tripleTau}(\bm{0}) e_{\tripleTau}(\ell)\big)\Big| + \mathcal{O}(|e_{\tripleTau}(\ell)|^2) \nonumber \\[-0.2cm]
\lesssim~& |\ell|^{-2}.
\end{align}
%Moreover,
%\begin{equation}\label{force_elastic3}
%\Big|\tilde{D}_{-\rho}\big(V_{, \triple} \big(e(\ell)\big)\big)\Big| \lesssim~ |\ell|^{-2},
%\end{equation}

Inserting estimates~\eqref{force_elastic1},~\eqref{force_elastic2}, and~\eqref{force_elastic3} into~\eqref{t1}, we obtain
\begin{equation}\label{t12}
|T_1| \lesssim~  \Big|\sum_{\ell \in \Lambda} f(\ell)\cdot u_0(\ell) \Big|+ \sum_{\ell \in \Lambda}\sum_{\triple}\big|1+|\ell|\big|^{-2}|p_\beta(\ell)|,
\end{equation}
where $|f(\ell)| \lesssim~ |\ell|^{-3}$ for sufficiently large $|\ell|$.  We may then apply~\cite[Corollary 5.2]{Ehrlacher2013} to deduce the existence of $\bm{g}: \Lambda \to (\mathbb{R}^3)^{\mathcal{R}_1}$ such that
%\cys{lost $\sum_{\rho \in \mathcal{R}_1}$ in the right hand side?}
\[
\sum_{\ell \in \Lambda} f(\ell)\cdot u_0(\ell) = \sum_{\ell \in \Lambda} \sum_{\rho \in \mathcal{R}_1} g_\rho(\ell) D_\rho u_0(\ell)
\]
where $|g_\rho(\ell)| \lesssim |\ell|^{-2}$ for sufficiently large $|\ell|$.  This can then be inserted into~\eqref{t12} to yield
\[
|T_1| \lesssim \sum_{\ell \in \Lambda}\sum_{\beta \in \mathcal{S}} \big|1+|\ell|\big|^{-2}\big|D_\rho u_0(\ell) + p_\beta(\ell)\big| \lesssim~ \|\bm{u}\|_{\a_1},
\]
where we have applied the Cauchy-Schwarz inequality and summability of $|\ell|^{-2}$ in the final inequality. Combining our estimates for $T_1$ and $T_2$ shows $\<\delta \mathcal{E}^\a(\bm{0}),\cdot\>$ is a bounded linear functional and thus completes the proof.
%\end{proof}

\section{Proofs: Regularity}
\label{sec:proof:decay_thm}
In this section we prove the regularity result, Theorem \ref{decay_thm}.
The main idea of proving Theorem~\ref{decay_thm} is to show that $\bm{u}^\infty$ solves a linearized problem whose Green's function may be estimated in terms of existing Green's function estimates developed in~\cite{olsonOrtner2016} for point defects in multilattices.  The residual terms found in this linearization process are estimated in close analogy to~\cite{Ehrlacher2013}, and then the two estimates are combined to yield the theorem.  It is with this breakdown in mind that we split this section into separate subsections.  In Section~\ref{lin_res}, we derive the linearized problem and corresponding estimates on the residual; in Section~\ref{greeny} we recall the needed properties of the Green's function (matrix). Finally, in Section~\ref{pure_analysis}, we combine these results in a ``pure'' analysis problem to derive the estimates~\eqref{decay1}.

%Finally, in Section~\ref{numerics}, we describe a simple numerical example of a straight edge dislocation in Silicon, which illustrates the proven decay rates.

%The energy difference functional between the homogeneous, defect-free lattice and the dislocation will be defined by
%\begin{equation}
%\mathcal{E}^{\rm dis}(\bm{u}) = \sum_{\xi \in \mathcal{L}} \big\{ V_\xi(D\bm{u}) - V_\xi(D\bm{u}^{\rm dis}).
%\end{equation}
%where $\bm{u}^{\rm dis}$ is a far-field dislocation predictor defined Section~\ref{sfd}.

\subsection{Linearized Problem and Residual Estimates}\label{lin_res}

Our goal here is to establish the linearized problem and give the corresponding residual estimates. The linearized equation that $\bm{u}^\infty$ satisfies is formed by linearizing the defect-free energy $\mathcal{E}^{\rm a}_{\rm hom}(\bm{u})$ about the reference state $\bm{u}=\bm{0}$. The key point in this linearization is that the residual is quadratic in terms of the defect solution.

\begin{lemma}\label{lin_lemma}
Let $\bm{u}^\infty \in \mathcal{A}$ be a local minimizer of $\mathcal{E}^\a(\bm{u})$, there exists $\bar{f}: \Lambda \to \big(\mathbb{R}^3\big)^{\mathcal{R}}$ such that
\begin{align*}
\sum_{\ell \in \Lambda} \<\delta^2 V(\bm{0}) \tilde{D}\bm{u}^\infty, \tilde{D}\bm{v}\> =~& \sum_{\ell \in \Lambda} \sum_{\triple \in \mathcal{R}} \bar{f}_{\triple}(\ell)\cdot \tilde{D}_{\triple}\bm{v}(\ell) - \<\delta \mathcal{E}^\a(\bm{0}),\bm{v}\>, \quad \forall v \in \bm{\mathcal{U}}_0,
\end{align*}
where $\bar{f}_{\triple}$ satisfies
\begin{align*}
|\bar{f}_{\triple}(\ell)| \lesssim~ |\ell|^{-2} + |\tilde{D}\bm{u}^\infty(\ell)|^{2}_{\mathcal{R}}, \quad \forall (\rho\alpha\beta) \in \mathcal{R}. 
\end{align*}
\end{lemma}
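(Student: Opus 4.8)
The plan is to linearise the equilibrium equation satisfied by $\bm{u}^\infty$ about the homogeneous reference state $\bm{0}$, and to absorb every contribution that is neither $\langle\delta^2 V(\bm{0})\tilde{D}\bm{u}^\infty,\tilde{D}\bm{v}\rangle$ nor $-\langle\delta\mathcal{E}^\a(0),\bm{v}\rangle$ into the residual force $\bar{f}$. Since $\bm{u}^\infty\in\mathcal{A}$ is a local minimiser of $\mathcal{E}^\a$, which by Theorem~\ref{energy_thm} is ${\rm C}^3$ on $\mathcal{A}$, it satisfies $\langle\delta\mathcal{E}^\a(\bm{u}^\infty),\bm{v}\rangle = 0$ for all $\bm{v}\in\bm{\mathcal{U}}$. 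From~\eqref{ell_potential} and the slip-invariance identity~\eqref{slip_strain} one has $V_\ell(D\bm{u}(\ell)) = V(e(\ell)+\tilde{D}\bm{u}(\ell)) - V(e(\ell))$, so differentiating term by term (first for $\bm{v}\in\bm{\mathcal{U}}_0$, where the sum is finite, and then extending by density once the sums below are seen to converge absolutely) puts the equilibrium equation in the form $\sum_{\ell\in\Lambda}\langle\delta V(e(\ell)+\tilde{D}\bm{u}^\infty(\ell)),\tilde{D}\bm{v}(\ell)\rangle = 0$.

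First I would Taylor expand $\delta V$ about $\bm{0}$ to second order with integral remainder. Setting $w(\ell) := e(\ell)+\tilde{D}\bm{u}^\infty(\ell)$, write $\delta V(w(\ell)) = \delta V(\bm{0}) + \delta^2 V(\bm{0})w(\ell) + \mathcal{R}_\ell$ with $|\mathcal{R}_\ell|\lesssim|w(\ell)|^2$, the remainder bound following from $V\in{\rm C}^k$, $k\geq 4$, with bounded derivatives on the relevant bounded range of arguments — which is legitimate because $\bm{u}^\infty\in\mathcal{A}$ and $e$ are bounded. Since $|e(\ell)|\lesssim|\ell|^{-1}$ by~\eqref{second_diff}, Young's inequality gives $|w(\ell)|^2\lesssim|\ell|^{-2}+|\tilde{D}\bm{u}^\infty(\ell)|^2$. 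Substituting into the equilibrium equation and isolating the term linear in $\tilde{D}\bm{u}^\infty$ yields
\[
\sum_{\ell}\langle\delta^2 V(\bm{0})\tilde{D}\bm{u}^\infty(\ell),\tilde{D}\bm{v}(\ell)\rangle = -\sum_\ell\langle\delta V(\bm{0})+\delta^2 V(\bm{0})e(\ell),\tilde{D}\bm{v}(\ell)\rangle - \sum_\ell\langle\mathcal{R}_\ell,\tilde{D}\bm{v}(\ell)\rangle .
\]

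Next I would identify the first sum on the right with $-\langle\delta\mathcal{E}^\a(0),\bm{v}\rangle$ up to a summable perturbation. Differentiating the same identity at $\bm{u}=\bm{0}$ gives $\langle\delta\mathcal{E}^\a(0),\bm{v}\rangle = \sum_\ell\langle\delta V(e(\ell)),\tilde{D}\bm{v}(\ell)\rangle$ (the representation already used in the proof of Theorem~\ref{energy_thm}), and a first-order Taylor expansion about $\bm{0}$ gives $\delta V(e(\ell)) = \delta V(\bm{0})+\delta^2 V(\bm{0})e(\ell)+\mathcal{S}_\ell$ with $|\mathcal{S}_\ell|\lesssim|e(\ell)|^2\lesssim|\ell|^{-2}$. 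Hence $-\sum_\ell\langle\delta V(\bm{0})+\delta^2 V(\bm{0})e(\ell),\tilde{D}\bm{v}(\ell)\rangle = -\langle\delta\mathcal{E}^\a(0),\bm{v}\rangle + \sum_\ell\langle\mathcal{S}_\ell,\tilde{D}\bm{v}(\ell)\rangle$, and combining with the previous display proves the lemma with $\bar{f}_{\triple}(\ell) := [\mathcal{S}_\ell]_{\triple} - [\mathcal{R}_\ell]_{\triple}$, which satisfies $|\bar{f}_{\triple}(\ell)|\lesssim|\ell|^{-2}+|\tilde{D}\bm{u}^\infty(\ell)|^2$.

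The main obstacle is bookkeeping rather than conceptual: every rearranged sum must converge absolutely so that the identities, derived on $\bm{\mathcal{U}}_0$, genuinely extend to all $\bm{v}\in\bm{\mathcal{U}}$. For the $\mathcal{R}_\ell$ and $\mathcal{S}_\ell$ sums this follows from Cauchy--Schwarz, the $|\ell|^{-2}$ decay together with summability of $|\ell|^{-4}$ in two dimensions, the $\ell^2$-summability of $\tilde{D}\bm{v}$ and $\tilde{D}\bm{u}^\infty$ inherited from the $\|\cdot\|_{\a_1}$ bound, and $\tilde{D}\bm{u}^\infty\in\ell^\infty$ (needed to control $\sum_\ell|\tilde{D}\bm{u}^\infty(\ell)|^2|\tilde{D}\bm{v}(\ell)|$); the $\langle\delta\mathcal{E}^\a(0),\bm{v}\rangle$ term is precisely the bounded linear functional established in Theorem~\ref{energy_thm}, into whose boundedness proof the $|\ell|^{-3}$ net-force decay of Lemma~\ref{at_force} is already built. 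A secondary point to verify is that, although $e$ and $\tilde{D}\bm{u}^\infty$ are assembled from fields that fail to be smooth across the branch cut $\Gamma$, the reformulation via the operators $R$, $S_0$, $S$ keeps the arguments of $V$ and its derivatives well-behaved there (cf. the smoothness of $(S_0 U^0,\bm{p}^0)$ recorded after Theorem~\ref{ml_thm_vs} and exploited in Lemma~\ref{diff_grad}), so $\Gamma$ introduces no new difficulty.
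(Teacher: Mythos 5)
Your proposal is correct and is essentially the paper's proof: after the cancellations, your residual $\bar{f}_{\triple}(\ell) = V_{\triple}(e(\ell)) - V_{\triple}(e(\ell)+\tilde{D}\bm{u}^\infty(\ell)) + \sum_{\tripleTau}V_{\triple\tripleTau}(\bm{0})\tilde{D}_{\tripleTau}\bm{u}^\infty(\ell)$ coincides with the paper's, and both arguments rest on the Euler--Lagrange equation in the slip-invariant form, Taylor expansion with bounded derivatives, and the bound $|e(\ell)|\lesssim|\ell|^{-1}$ from Lemma~\ref{diff_grad}. The only (cosmetic) difference is that the paper expands $\delta V$ about $e(\ell)$ and separately estimates the Hessian difference $\delta^2V(e(\ell))-\delta^2V(\bm{0})$, whereas you expand about $\bm{0}$ directly and invoke Young's inequality once; the resulting estimate is identical.
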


\begin{proof}
Recall $\bm{u}^\infty$ solves the atomistic Euler-Lagrange equations,
\[
\<\delta \mathcal{E}^\a(\bm{u}^\infty),\bm{v} \> = 0, \quad \forall \bm{v} \in \bm{\mathcal{U}}_0.
\]
Using \eqref{slip_strain}, we may simply expand this as
\begin{align*}
0 =~& \sum_{\ell \in \Lambda} \<\delta V(e(\ell) + \tilde{D}\bm{u}^\infty(\ell)), \tilde{D}\bm{v}\> \\
=~& \sum_{\ell \in \Lambda} \<\delta V(e(\ell) + \tilde{D}\bm{u}^\infty(\ell)) - \delta V(e(\ell)) - \delta^2 V(e(\ell))\tilde{D}\bm{u}^\infty(\ell), \tilde{D}\bm{v}\>  \\
&+~ \sum_{\ell \in \Lambda} \<\big(\delta^2 V(e(\ell)) - \delta^2 V(\bm{0})\big)\tilde{D}\bm{u}^\infty(\ell), \tilde{D}\bm{v}\> \\
&+~ \sum_{\ell \in \Lambda} \<\delta^2 V(\bm{0})\tilde{D}\bm{u}^\infty(\ell), \tilde{D}\bm{v}\> + \sum_{\ell \in \Lambda} \<\delta V(e(\ell)), \tilde{D}\bm{v}\>.
\end{align*}

We then rearrange the terms to arrive at
\begin{align}\label{rearr}
\sum_{\ell \in \Lambda} \<\delta^2 V(\bm{0})\tilde{D}\bm{u}^\infty(\ell), \tilde{D}\bm{v}\> =~& -\sum_{\ell \in \Lambda} \<\delta V(e(\ell) + \tilde{D}\bm{u}^\infty(\ell)) - \delta V(e(\ell)) - \delta^2 V(e(\ell))\tilde{D}\bm{u}^\infty(\ell), \tilde{D}\bm{v}\>  \nonumber \\
&- \sum_{\ell \in \Lambda} \<\big(\delta^2 V(e(\ell)) - \delta^2 V(\bm{0})\big)\tilde{D}\bm{u}^\infty(\ell), \tilde{D}\bm{v}\>  \nonumber \\
&- \<\delta \mathcal{E}^\a(\bm{0}),\bm{v}\>, 
\end{align}
where in the last line we use the identity $\<\delta \mathcal{E}^\a(\bm{0}),\bm{v}\> = \sum_{\ell \in \Lambda} \<\delta V(e(\ell)), \tilde{D}\bm{v}\>$. Given $(\rho\alpha\beta)\in\mathcal{R}$, upon defining
\begin{align*}
\bar{f}_{\triple}(\ell) =~& -V_{, \triple}\big(e(\ell) +  \tilde{D}\bm{u}^\infty(\ell)\big) + V_{, \triple}(e(\ell)) + \sum_{\tripleTau} V_{, \triple\tripleTau}(e(\ell))\tilde{D}_{\tripleTau}\bm{u}^\infty(\ell) \\
&- \sum_{\tripleTau} \big(V_{, \triple\tripleTau}(e(\ell))\tilde{D}_{\tripleTau}\bm{u}^\infty(\ell) +  V_{, \triple\tripleTau}(\bm{0})\tilde{D}_{\tripleTau}\bm{u}^\infty(\ell) \big),
\end{align*}
it only remains to establish the given estimate on $\bar{f}_{\triple}(\ell)$. However, this is a straightforward consequence of Taylor's expansions and the aforementioned decay estimates on $e_{\tripleTau}(\ell)$ stated in Lemma~\ref{diff_grad}.

\end{proof}

Next, we prove the estimate of the term $\<\delta \mathcal{E}^\a(\bm{0}),\bm{v}\> = \sum_{\ell \in \Lambda} \<\delta V(e(\ell)), \tilde{D}\bm{v}\>$, and then give the residual estimate of the linearized problem in the following theorem.
% This is done by computing the force of the \textit{defect-free} energy when evaluated at the linear elastic solution. We then convert from a force-displacement pairing to a stress-strain pairing and finally estimate the difference in stresses between the defect-free state and the dislocation state.

\begin{theorem}\label{lin_prob_thm}
Let $\bm{u}^\infty \in \mathcal{A}$ be a local minimizer of $\mathcal{E}^\a(\bm{u})$, there exists $\bar{f}: \Lambda \to \big(\mathbb{R}^3\big)^{\mathcal{R}}$, $\bm{g}: \Lambda \to \big(\mathbb{R}^3\big)^{\mathcal{R}_1}$  and $\bm{k}:\Lambda \to \big(\mathbb{R}^3\big)^{\mathcal{S}}$ such that for $\bm{v} = (v_0, \bm{q}) \in \bm{\mathcal{U}}_0$,
\begin{align}
\<\tilde{H}\bm{u}^\infty,\bm{v}\> :=& \sum_{\ell \in \Lambda} \<\delta^2 V(\bm{0}) \tilde{D}\bm{u}^\infty, \tilde{D}\bm{v}\> \nonumber \\
=& \sum_{\ell \in \Lambda}\big(  \sum_{\triple \in \mathcal{R}}\bar{f}_{\triple}(\ell)\cdot \tilde{D}_{\triple}\bm{v}(\ell) + \<\bm{g}(\ell), Dv_0(\ell)\> + \<\bm{k}(\ell),\bm{q}(\ell)\>\big) ,
\end{align}
where for sufficiently large $|\ell|$, $f_{\triple}(\ell), g_{\rho}(\ell), k_\gamma(\ell)$ satisfy
\begin{align*}
|\bar{f}_{\triple}(\ell)| \lesssim~& |\ell|^{-2} + |\tilde{D}\bm{u}^\infty(\ell)|^{2}_{\mathcal{R}}, \quad\forall (\rho\alpha\beta)\in \mathcal{R}, \\
|g_{\rho}(\ell)| \lesssim~& |\ell|^{-2}, \quad \forall \rho \in \mathcal{R}_1, \\
|k_{\gamma}(\ell)| \lesssim~&  |\ell|^{-2}, \quad \forall \gamma \in \mathcal{S}.
\end{align*}
\end{theorem}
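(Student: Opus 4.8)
The plan is to start from the identity in Lemma~\ref{lin_lemma},
\[
\sum_{\ell \in \Lambda} \<\delta^2 V(\bm{0}) \tilde{D}\bm{u}^\infty, \tilde{D}\bm{v}\> = \sum_{\ell \in \Lambda} \sum_{\triple \in \mathcal{R}} \bar{f}_{\triple}(\ell)\cdot \tilde{D}_{\triple}\bm{v}(\ell) - \<\delta \mathcal{E}^\a(0),\bm{v}\>,
\]
keeping the very same $\bar f$; the only task is to recast $-\<\delta\mathcal{E}^\a(0),\bm{v}\> = -\sum_{\ell}\<\delta V(e(\ell)),\tilde{D}\bm{v}(\ell)\>$ into the pieces $\sum_\ell\<\bm{g}(\ell),Dv_0(\ell)\>$ and $\sum_\ell\<\bm{k}(\ell),\bm{q}(\ell)\>$, where $\bm{v}=(v_0,\bm{q})$. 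To do this I would re-run the decomposition already used in the proof of Theorem~\ref{energy_thm}: by~\eqref{slip_fd_id}, $\<\delta\mathcal{E}^\a(0),\bm{v}\> = T_1 + T_2$ with $T_1 = \sum_\ell\sum_\triple V_\triple(e(\ell))\cdot[\tilde{D}_\rho v_0(\ell)+\tilde{D}_\rho q_\beta(\ell)]$ and $T_2 = \sum_\ell\sum_\triple V_\triple(e(\ell))\cdot[q_\beta(\ell)-q_\alpha(\ell)]$, but now retaining the structure of the $\mathcal{O}(|\ell|^{-2})$ and $\mathcal{O}(|\ell|^{-3})$ remainders instead of only bounding $|T_1|,|T_2|$.

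For $T_1$, I would ``integrate $\tilde{D}$ by parts'' via~\cite[Lemma 5.7]{Ehrlacher2013}, obtaining $T_1 = \sum_\ell f(\ell)\cdot v_0(\ell) + \sum_\ell\sum_\triple \tilde{D}_{-\rho}\big(V_\triple(e(\ell))\big)\cdot q_\beta(\ell)$, where $f(\ell):=\sum_\triple\tilde{D}_{-\rho}\big(V_\triple(e(\ell))\big)$ is the net force on the multilattice site $\ell$. As recorded in~\eqref{force_elastic1}--\eqref{force_elastic2}, this net force equals $\sum_\gamma \tfrac{\partial\mathcal{E}^\a_{\rm hom}}{\partial u_\gamma(\ell)}$ evaluated at $(U^0,\bm{p}^0)$ for $\ell\notin\Omega_\Gamma$ and at $(S_0U^0,S\bm{p}^0)$ for $\ell\in\Omega_\Gamma$, so Lemma~\ref{at_force} yields $|f(\ell)|\lesssim|\ell|^{-3}$; applying~\cite[Corollary 5.2]{Ehrlacher2013} then produces $\bm{g}$ with $\sum_\ell f(\ell)\cdot v_0(\ell)=\sum_\ell\<\bm{g}(\ell),Dv_0(\ell)\>$ and $|g_\rho(\ell)|\lesssim|\ell|^{-2}$. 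The remaining part of $T_1$ is collected over the third species index into $\sum_\ell\<\bm{k}^{(1)}(\ell),\bm{q}(\ell)\>$ with $k^{(1)}_\gamma(\ell):=\sum_{(\rho\alpha\gamma)}^\gamma\tilde{D}_{-\rho}\big(V_{(\rho\alpha\gamma)}(e(\ell))\big)$, and $|k^{(1)}_\gamma(\ell)|\lesssim|\ell|^{-2}$ is exactly~\eqref{force_elastic3}.

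For $T_2$ --- the one genuinely new computation --- I would Taylor expand $V_\triple(e(\ell)) = V_\triple(\bm{0}) + \sum_{\tripleTau}V_{\triple\tripleTau}(\bm{0})\,e_{\tripleTau}(\ell) + \mathcal{O}(|e(\ell)|^2)$ and use $|e(\ell)|\lesssim|\ell|^{-1}$ from Lemma~\ref{diff_grad}. Collecting over species, the constant part $\sum_\triple V_\triple(\bm{0})\cdot[q_\beta-q_\alpha]$ vanishes by the reference-equilibrium identity $\sum_{(\rho\alpha\gamma)}^\gamma V_{(\rho\alpha\gamma)}(\bm{0}) - \sum_{(\rho\gamma\alpha)}^\gamma V_{(\rho\gamma\alpha)}(\bm{0}) = 0$ of~\cite[A.4]{olsonOrtner2016}. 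In the linear part I would replace $e_{\tripleTau}(\ell)$ by $\nabla_\tau U^0(\ell)+p^0_\delta(\ell)-p^0_\gamma(\ell)$ (by $R\nabla_\tau S_0U^0(\ell)+p^0_\delta(\ell)-p^0_\gamma(\ell)$ when $\ell\in\Omega_\Gamma$) at the cost of an $\mathcal{O}(|\ell|^{-2})$ error from~\eqref{zero_diff}--\eqref{first_diff}; after collecting over species, the coefficient of $q_\gamma(\ell)$ is then precisely term $2a$ in~\eqref{at_force_eq} evaluated at $(U^0,\bm{p}^0)$ --- respectively at $(SU^0,S\bm{p}^0)$ --- which Lemma~\ref{at_force} shows vanishes there (this is the Cauchy--Born shift-equilibrium equation, the second line of~\eqref{lin_cb_strong}). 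Everything left over in $T_2$ is a species-indexed quantity of size $\mathcal{O}(|\ell|^{-2})$ contracted against $\bm{q}(\ell)$, so $T_2 = \sum_\ell\<\bm{k}^{(2)}(\ell),\bm{q}(\ell)\>$ with $|k^{(2)}_\gamma(\ell)|\lesssim|\ell|^{-2}$.

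Setting $\bm{k}:=-\bm{k}^{(1)}-\bm{k}^{(2)}$, taking $\bm{g}$ (with the appropriate sign) from the step above, and keeping $\bar{f}$ from Lemma~\ref{lin_lemma} then yields the asserted identity with all three decay bounds; since $\bm{v}\in\bm{\mathcal{U}}_0$ the quantities $Dv_0$, $\bm{q}$ and $\tilde{D}\bm{v}$ have compact support, so every sum is finite and no convergence issue arises. The main obstacle is the $T_2$ bookkeeping: one must simultaneously exploit the two distinct cancellations (the reference-equilibrium identity and the vanishing of term $2a$), carry the $\mathcal{O}(|\ell|^{-2})$ Taylor remainders supplied by Lemma~\ref{diff_grad}, and treat the slip operator $S_0$ inside $\Omega_\Gamma$ exactly as in the proof of Lemma~\ref{at_force} (where $S\bm{p}^0$ is only piecewise smooth yet still admits the needed Taylor expansions). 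Every other ingredient is a refinement of estimates already established for Theorem~\ref{energy_thm}.
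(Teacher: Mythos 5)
Your proposal is correct and follows essentially the same route as the paper: it starts from Lemma~\ref{lin_lemma}, splits $\<\delta \mathcal{E}^\a(0),\bm{v}\>$ via~\eqref{slip_fd_id} into the $v_0$, $\tilde{D}_\rho q_\beta$, and $q_\beta-q_\alpha$ contributions (the paper's $B_1,B_2,B_3$), and reuses the $T_1$/$T_2$ machinery from Theorem~\ref{energy_thm} --- summation by parts, the net-force decay of Lemma~\ref{at_force} with \cite[Corollary 5.2]{Ehrlacher2013} for $\bm{g}$, estimate~\eqref{force_elastic3} and the Taylor/cancellation argument for $\bm{k}$. The only difference is that you spell out the $T_2$ bookkeeping that the paper handles by citation to its earlier proof, which is a faithful reproduction rather than a new argument.
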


\begin{proof}
From Lemma~\ref{lin_lemma},
\begin{align*}
\sum_{\ell \in \Lambda} \<\delta^2 V(\bm{0}) \tilde{D}\bm{u}^\infty, \tilde{D}\bm{v}\> =~& \sum_{\ell \in \Lambda} \sum_{\triple \in \mathcal{R}} \bar{f}_{\triple}(\ell)\cdot \tilde{D}_{\triple}\bm{v}(\ell) -\sum_{\ell \in \Lambda} \<\delta V(\tilde{D}\bm{u}^{0}(\ell)), \tilde{D}\bm{v}\>,
\end{align*}
and
\begin{align}\label{e1}
\sum_{\ell \in \Lambda} \<\delta V(\tilde{D}\bm{u}^{0}(\ell)), \tilde{D}\bm{v}\> &= \sum_{\ell \in \Lambda} \sum_{\tripleR} V_{, \triple}(\tilde{D}\bm{u}^{0}(\ell))\cdot \tilde{D}_{\triple} \bm{v}(\ell) \nonumber \\
%&=~ \sum_{\ell \in \Lambda} \sum_{\triple} V_{, \triple}(\tilde{D}\bm{u}^{0}(\ell))\cdot \big(\tilde{D}_\rho v_0(\ell) + \tilde{D}_\rho q_\beta(\ell) + q_\beta(\ell) - q_\alpha(\ell)\big) \nonumber \\
&=~ \sum_{\ell \in \Lambda} \sum_{\tripleR} V_{, \triple}(\tilde{D}\bm{u}^{0}(\ell))\cdot \tilde{D}_\rho v_0(\ell) \nonumber \\
 &\quad+ \sum_{\ell \in \Lambda} \sum_{\tripleR} V_{, \triple}(\tilde{D}\bm{u}^{0}(\ell))\cdot \tilde{D}_\rho q_\beta(\ell) \nonumber \\
&\quad+ \sum_{\ell \in \Lambda} \sum_{\tripleR} V_{, \triple}(\tilde{D}\bm{u}^{0}(\ell))\cdot \big( q_\beta(\ell) - q_\alpha(\ell)\big) \nonumber \\
&=:~ B_1 + B_2 + B_3.
\end{align}
We observe that $B_3$ was exactly the term, $T_2$, estimated in Theorem~\ref{energy_thm}, where we saw that
\[
B_3 = \<\bm{k}_1,\bm{q}\>, \qquad |k_1(\ell)| \lesssim |\ell|^{-2}.
\]
Meanwhile, $B_2$ is the second term of $T_1$ in~\eqref{t1}, which we saw could be estimated by
\begin{align*}
B_2 =~& \sum_{\ell \in \Lambda} \sum_{\tripleR} V_{, \triple}(\tilde{D}\bm{u}^{0}(\ell))\cdot \tilde{D}_\rho q_\beta(\ell) \\
=~&\sum_{\ell \in \Lambda} \tilde{D}_{-\rho}\sum_{\tripleR} V_{, \triple}(e(\ell))\cdot q_\beta(\ell) = \<\bm{k}_2,\bm{q}\>, \qquad |k_2(\ell)| \lesssim |\ell|^{-2}.
\end{align*}
Thus,
\begin{equation}\label{hequation}
B_2 + B_3 = \<\bm{k}, \bm{q}\>, \qquad |k(\ell)| \lesssim |\ell|^{-2}.
\end{equation}

Next, we observe $B_1$ is precisely the first term estimated in $T_1$ in~\eqref{t1} so, as in Theorem~\ref{energy_thm}, there exists $\bm{g}: \Lambda \to (\mathcal{R}^d)^{\mathcal{R}_1}$ with $|g(\ell)| \lesssim~ |\ell|^{-2}$ and
\begin{align*}
B_1 =~& %\sum_{\ell \in \Lambda} \sum_{\triple} V_{\triple}(\tilde{D}\bm{u}^{0})\cdot \tilde{D}_\rho v_0 \\
%=~& \sum_{\ell \in \Lambda} \sum_{\triple} \tilde{D}_{-\rho} V_{\triple}(\tilde{D}\bm{u}^{0})\cdot v_0 =:~
\sum_{\ell \in \Lambda} \sum_{\rho \in \mathcal{R}_1} g_\rho (\ell)\cdot D_\rho v_0(\ell).
\end{align*}

\end{proof}

\subsection{Green's Function}\label{greeny}

Having defined our linearized problem and estimated the residual in Theorem~\ref{lin_prob_thm}, we will proceed to estimate the decay of $\bm{u}^\infty$ in the current section.  Doing that will require comparing the atomistic Green's matrix for the homogeneous energy derived in~\cite{olsonOrtner2016} for point defects to the Green's matrix for the dislocation solution.  Thus, we will introduce the homogeneous Green's function and the corresponding estimates for its decay here.

From~\cite{olsonOrtner2016}, we let $\xi \in \mathcal{B}$, where $\mathcal{B}$ is the first Brillouin zone associated to the atomic lattice, denote the Fourier variable and set
\begin{align*}
\hat{H}_{00}(\xi) :=~& \sum_{\triple\in \mathcal{R}}\sum_{\tripleTau\in \mathcal{R}} (e^{-2\pi i \xi \cdot \tau}-1)V_{,\triple\tripleTau}(\bm{0})(e^{2\pi i \xi \cdot \rho}-1), \\
[\hat{H}_{0\bm{p}}(\xi)]_\beta :=~&  \sum_{\rho\in\mathcal{R}_1}\sum_{\alpha = 0}^{S-1}\sum_{\tripleTau\in \mathcal{R}} \big[(e^{-2\pi i \xi\cdot \tau}-1)V_{,(\rho\alpha\beta)(\tau\gamma\delta)}(\bm{0})(e^{2\pi i \xi\cdot \rho}) \\
&\hspace{3in}- (e^{-2\pi i \xi\cdot \tau}-1)V_{,(\rho\beta\alpha)(\tau\gamma\delta)}(\bm{0})\big],  \\
[\hat{H}_{\bm{p}0}(\xi)]_\delta :=~& \sum_{\tau\in\mathcal{R}_1}\sum_{\gamma = 0}^{S-1}\sum_{\triple\in \mathcal{R}} \big[(e^{-2\pi i \xi\cdot \tau}) V_{,(\rho\alpha\beta)(\tau\gamma\delta)}(\bm{0})(e^{2\pi i \xi\cdot \rho} - 1) \\
&\hspace{3in}-   V_{,(\rho\alpha\beta)(\tau\delta\gamma)}(\bm{0})(e^{2\pi i \xi\cdot \rho} - 1)\big], \\
[\hat{H}_{\bm{p}\bm{p}}(\xi)]_{\beta\delta} :=~& \sum_{\rho,\tau \in \mathcal{R}_1}\sum_{\alpha,\gamma = 0}^{S-1}\left[e^{-2\pi i \xi\cdot \tau} V_{,(\rho\alpha\beta)(\tau\gamma\delta)}(\bm{0}) e^{2\pi i \xi\cdot \rho} + V_{,(\rho\beta\alpha)(\tau\delta\gamma)}(\bm{0}) - e^{-2\pi i \xi\cdot \tau}V_{,(\rho\beta\alpha)(\tau\gamma\delta)}(\bm{0}) \right. \\
&\hspace{3in}\left. - e^{2\pi i \xi \cdot \rho} V_{,(\rho\alpha\beta)(\tau\delta\gamma)}(\bm{0})\right],
\end{align*}
and define the {\em dynamical matrix}~\cite{wallace1998},
\begin{equation}\label{dynam_matrix}
   \hat{H}(\xi) := \begin{bmatrix} \hat{H}_{00}(\xi) & \hat{H}_{0\bm{p}}(\xi) \\ \hat{H}_{\bm{p}0}(\xi) & \hat{H}_{\bm{p}\bm{p}}(\xi)\end{bmatrix}.
\end{equation}

It has the property that
\begin{equation}\label{planch}
\<H\bm{u}^\infty, \bm{v} \> := \<\delta^2 \mathcal{E}^\a_{\rm hom}(\bm{0})\bm{u}^\infty, \bm{v}\> =~ \int_{\mathcal{B}} \begin{bmatrix} \hat{Z}(\xi) \\ \hat{\bm{q}}(\xi)\end{bmatrix}^* \hat{H}(\xi) \begin{bmatrix} \hat{U}^\infty(\xi) \\ \hat{\bm{p}}^\infty(\xi) \end{bmatrix}\, d\xi, \quad \forall \bm{v} = (Z,\bm{q}) \in \bm{\mathcal{U}}.
\end{equation}
The atomistic Green's matrix for the homogeneous energy is then defined by
\begin{equation}\label{green_mat}
\mathcal{G} := (\hat{H}^{-1})^{\vee},
\end{equation}
where $\vee$ denotes the inverse Fourier transform. It follows from Assumption~\ref{coercive} that $H$ is an invertible operator (see also~\cite{olsonOrtner2016}).  Upon partitioning $\mathcal{G}$ as
\[
\begin{pmatrix}
G_{00}      &G_{0\bm{p}}\\
G_{\bm{p}0} &G_{\bm{p}\bm{p}}
\end{pmatrix},
\]
several important decay estimates for the individual blocks were established in~\cite[Theorem 4.4]{olsonOrtner2016}:     for $\bm{\rho} \in (\mathcal{R}_1)^t$, $t \geq 0$ and
   $|\bm{\rho}| := t \in \mathbb{Z}$,
\begin{align}
\left|D_{\bm{\rho}}G_{00}(\ell)\right| \lesssim~& (1+|\ell|)^{-d-|\bm{\rho}|+2},  \qquad |\bm{\rho}| \geq 1, \label{hessianDecay1} \\
\left|D_{\bm{\rho}}G_{0\bm{p}}(\ell)\right| \lesssim~& (1+|\ell|)^{-d-|\bm{\rho}|+1},  \qquad |\bm{\rho}| \geq 0, \label{hessianDecay2}\\
\left|D_{\bm{\rho}}G_{\bm{p}\bm{p}}(\ell)\right| \lesssim~& (1+|\ell|)^{-d-|\bm{\rho}|},  \qquad \quad  |\bm{\rho}| \geq 0. \label{hessianDecay3}
\end{align}

\subsection{Analysis Problem}\label{pure_analysis}

We fix $\ell \in \Lambda$ and define the lattice function $\bm{v} = (Z,\bm{q})$ by $\bm{v} = \sum_{m}\mathcal{G}(k-\ell)\bm{e}_m$ where $\hat{e}_m$ is the $m$th standard basis vector. Using the relation~\eqref{planch}, we may write our infinite-domain dislocation solution as
\begin{equation}\label{conv}
\begin{pmatrix}
U^\infty(\ell) \\
\bm{p}^\infty(\ell)
\end{pmatrix}
=
\<H\bm{u}^\infty, \bm{v}\> = \sum_m\int_{\mathcal{B}} e^{2\pi i \xi \cdot \ell} \begin{bmatrix} \hat{U}^\infty(\xi) \\ \hat{\bm{p}}^\infty(\xi) \end{bmatrix}_m \hat{e}_m\, d\xi,
%\mathcal{G}*H\begin{pmatrix}
%U^\infty(\ell) \\
%\bm{p}^\infty(\ell)
%\end{pmatrix} = \sum_{k \in \Lambda} \mathcal{G}(k-\ell)H(k)\begin{pmatrix}
%U^\infty(k) \\
%\bm{p}^\infty(k)
%\end{pmatrix} ,
\end{equation}
where $H$ is defined in~\eqref{planch} and $\mathcal{G}$ is the associated Green's matrix defined in~\eqref{green_mat}.
%Thus,
%\begin{equation}\label{conv_diff}
%\begin{split}
%\begin{pmatrix}
%D_\tau U^\infty(\ell) \\
%\bm{p}^\infty(\ell)
%\end{pmatrix}
%=~&
% \sum_{k \in \Lambda} \begin{pmatrix}
%D_\tau G_{00}(k-\ell)      &D_\tau G_{0\bm{p}}(k-\ell)\\
%G_{\bm{p}0}(k-\ell) &G_{\bm{p}\bm{p}}(k-\ell)
%\end{pmatrix}H(k)\begin{pmatrix}
%U^\infty(k) \\
%\bm{p}^\infty(k)
%\end{pmatrix}  \\
%=~&
 %\sum_{k \in \Lambda}
%\begin{pmatrix}
%D_\tau G_{00}(k-\ell)      &D_\tau G_{0\bm{p}}(k-\ell)\\
%G_{\bm{p}0}(k-\ell) &G_{\bm{p}\bm{p}}(k-\ell)
%\end{pmatrix}_m \cdot H(k)\begin{pmatrix}
%U^\infty(k) \\
%\bm{p}^\infty(k)
%\end{pmatrix} \hat{e}_m,
%\end{split}
%\end{equation}
%where here $\begin{pmatrix}\cdot \end{pmatrix}_m$ represents the $m$th row, while $\hat{e}_m$ is the $m$th standard basis vector.
Thus,
\begin{equation}\label{conv_var}
\begin{split}
\begin{pmatrix}
D_\tau U^\infty(\ell) \\
\bm{p}^\infty(\ell)
\end{pmatrix}
=~& \sum_m\left<H(k)\begin{pmatrix}
U^\infty(k) \\
\bm{p}^\infty(k)
\end{pmatrix},\begin{pmatrix}
D_\tau G_{00}(k-\ell)      &G_{0\bm{p}}(k-\ell)\\
D_\tau G_{\bm{p}0}(k-\ell) &G_{\bm{p}\bm{p}}(k-\ell)
\end{pmatrix} \hat{e}_m \right>.
\end{split}
\end{equation}
As shown in~\cite{Ehrlacher2013}, we consider two cases depending on the location of $\ell$ in the lattice:
\vskip-2cm
\subsubsection{Case 1} $B_{3/4|\ell|}(\ell) \cap \Gamma = \emptyset$

In this case we take a bump function $\eta(x)$ from~\cite{Ehrlacher2013}: define $s_1 := 1/2|\ell|-r_{\rm cut}, s_2 := 1/2|\ell|, \eta = 1$ in $B_{s_1/2}(\ell)$, $\eta = 0$ outside of $B_{s_1}(\ell)$, and $|\nabla \eta(x)| \lesssim~ |\ell|^{-1}$. We then make the substitution
\begin{equation}\label{eta_sub}
\begin{split}
\begin{pmatrix}
D_\tau U^\infty(\ell) \\
\bm{p}^\infty(\ell)
\end{pmatrix}
=~& \sum_m\left< H(k)\begin{pmatrix}
U^\infty(k) \\
\bm{p}^\infty(k)
\end{pmatrix},
\eta(k-\ell)\begin{pmatrix}
D_\tau G_{00}(k-\ell)      &G_{0\bm{p}}(k-\ell)\\
D_\tau G_{\bm{p}0}(k-\ell) &G_{\bm{p}\bm{p}}(k-\ell)
\end{pmatrix}\hat{e}_m \right> \\
&~+ \left<H(k)\begin{pmatrix}
U^\infty(k) \\
\bm{p}^\infty(k)
\end{pmatrix},(1-\eta(k-\ell))\begin{pmatrix}
D_\tau G_{00}(k-\ell)      &D_\tau G_{0\bm{p}}(k-\ell)\\
G_{\bm{p}0}(k-\ell) &G_{\bm{p}\bm{p}}(k-\ell)
\end{pmatrix}\hat{e}_m \right>
\end{split}
\end{equation}
Where $\eta \neq 0$, $D = \tilde{D}$ in this case so that from Theorem~\ref{lin_prob_thm}
\begin{equation}
\begin{split}
&\sum_m\left< H(k)\begin{pmatrix}
U^\infty(k) \\
\bm{p}^\infty(k)
\end{pmatrix},
\eta(k-\ell)\begin{pmatrix}
D_\tau G_{00}(k-\ell)      &G_{0\bm{p}}(k-\ell)\\
D_\tau G_{\bm{p}0}(k-\ell) &G_{\bm{p}\bm{p}}(k-\ell)
\end{pmatrix}\hat{e}_m \right>  \\
&=
\sum_m\left< \tilde{H}(k)\begin{pmatrix}
U^\infty(k) \\
\bm{p}^\infty(k)
\end{pmatrix},
\eta(k-\ell)\begin{pmatrix}
D_\tau G_{00}(k-\ell)      &G_{0\bm{p}}(k-\ell)\\
D_\tau G_{\bm{p}0}(k-\ell) &G_{\bm{p}\bm{p}}(k-\ell)
\end{pmatrix}\hat{e}_m \right> \\
&= \sum_m\sum_{\ell \in \Lambda}\sum_{\triple\in\mathcal{R}} D_{\triple}\left(\eta(k-\ell)
\begin{pmatrix}
D_\tau G_{00}(k-\ell) & G_{0\bm{p}}(k-\ell)\\
D_\tau G_{\bm{p}0}(k-\ell)   &G_{\bm{p}\bm{p}}(k-\ell)
\end{pmatrix}\hat{e}_m \right)\cdot \bar{f}_{\triple}(k) \\
&~+~ \sum_m\sum_{\ell \in \Lambda}\sum_{\rho\in\mathcal{R}_1} D_{\rho}\Big(\eta(k-\ell)
\begin{pmatrix}
D_\tau G_{00}(k-\ell) &
G_{0\bm{p}}(k-\ell)
\end{pmatrix}_m\Big)\cdot g_\rho(k) \quad \mbox{$(\cdot)_m$ is $m$th column} \\
&~+~\sum_{m}\sum_{\ell \in \Lambda} \eta(k-\ell)
\begin{pmatrix}
D_\tau G_{0\bm{p}}(k-\ell) &
G_{\bm{p}\bm{p}}(k-\ell)
\end{pmatrix}_m \cdot \bm{k}(k).
\end{split}
\end{equation}
We may now utilize the decay estimates for $\mathcal{G}$ in~\eqref{hessianDecay1},~\eqref{hessianDecay2},~\eqref{hessianDecay3}, for $\eta$, and those for $\bar{f},g,\bm{k}$ in Theorem~\ref{lin_prob_thm} to obtain
\begin{equation}
\begin{split}
&\sum_m\left<H(k)\begin{pmatrix}
U^\infty(k) \\
\bm{p}^\infty(k)
\end{pmatrix},\eta(k-\ell)\begin{pmatrix}
D_\tau G_{00}(k-\ell)      & G_{0\bm{p}}(k-\ell)\\
D_\tau G_{\bm{p}0}(k-\ell) &G_{\bm{p}\bm{p}}(k-\ell)
\end{pmatrix}\hat{e}_m \right>  \\
&\lesssim \sum_{k \in B_{s_2}(\ell)} \left( |k|^{-2} + |\tilde{D}\bm{u}^\infty(k)|^2 \right)|\ell-k|^{-2} \\
&=~  \sum_{k \in B_{s_2}(\ell)} \left( |k|^{-2} + |D\bm{u}^\infty(k)|^2 \right)|\ell-k|^{-2}.
\end{split}
\end{equation}

Next, we estimate
\begin{equation}
\begin{split}
&\left<H(k)\begin{pmatrix}
U^\infty(k) \\
\bm{p}^\infty(k)
\end{pmatrix},(1-\eta(k-\ell))\begin{pmatrix}
D_\tau G_{00}(k-\ell)      & G_{0\bm{p}}(k-\ell)\\
D_\tau G_{\bm{p}0}(k-\ell) &G_{\bm{p}\bm{p}}(k-\ell)
\end{pmatrix}\hat{e}_m  \right>\\
&=~ \sum_{k \in \ell}\<\delta^2 V(0)D\bm{u}^\infty,D\Big((1-\eta(k-\ell))\begin{pmatrix}
D_\tau G_{00}(k-\ell)      & G_{0\bm{p}}(k-\ell)\\
D_\tau G_{\bm{p}0}(k-\ell) &G_{\bm{p}\bm{p}}(k-\ell)
\end{pmatrix}\hat{e}_m \Big)\>.
\end{split}
\end{equation}
Using the product rule for finite differences, decay estimates on $\mathcal{G}$ and $|\nabla \eta(x)| \lesssim |x|^{-1}$, it is then straightforward to estimate
\begin{equation}
\begin{split}
&\left<H(k)\begin{pmatrix}
U^\infty(k) \\
\bm{p}^\infty(k)
\end{pmatrix},(1-\eta(k-\ell))\begin{pmatrix}
D_\tau G_{00}(k-\ell)      & G_{0\bm{p}}(k-\ell)\\
D_\tau G_{\bm{p}0}(k-\ell) &G_{\bm{p}\bm{p}}(k-\ell)
\end{pmatrix}\hat{e}_m  \right> \\
&\lesssim~ \sum_{k \in \Lambda, |k-\ell| \gtrsim s_1/4 } |D\bm{u}(k)|^\infty|k-\ell|^{-2}.
\end{split}
\end{equation}

We now remark that these estimates are set up to be identical to those used to obtain~\cite[Equation 6.31]{Ehrlacher2013}, and thus we likewise obtain for sufficiently large $\ell$
\begin{equation}\label{641}
\max\{|DU^{\infty}(\ell)|, |\bm{p}^\infty(\ell)|\} \lesssim~ |\ell|^{-1} + \|(1+|\ell-k|)^{-1}D\bm{u}^\infty(k)\|_{\ell^2(\Lambda \cap B_{s_2}(\ell))}.
\end{equation}

\subsubsection{Case 2} $B_{3/4|\ell|}(\ell) \cap \Gamma \neq \emptyset$

We argue as in~\cite{Ehrlacher2013} by using a reflection argument whereby the branch cut $\Gamma = \{(x_1, x_2) : x_2 = \hat{x}_2, x_1 > \hat{x}_1\}$ is replaced by $\Gamma_S = \{(x_1, x_2) : x_2 = \hat{x}_2, x_1 < \hat{x}_1\}$ and the energy is replaced by
\[
\mathcal{E}_S(\bm{u}) = \sum_{\ell \in \Lambda} V\big(D(S_0u_0 + u)\big).
\]
For this energy, we have that $\<\delta \mathcal{E}_S(S\bm{u}^\infty), \bm{v}\> = 0$ since
\begin{align*}
&\<\delta \mathcal{E}_S(S\bm{u}^\infty), \bm{v}\> \\
&=~ \sum_{\ell \in \Lambda} \sum_{\triple\in\Rc}V_{, \triple}\big(D(S_0u_0 + Su^\infty)\big)\cdot D_{\triple}\bm{v}(\ell) \\
&=~ \sum_{\ell \in \Lambda} \sum_{\tripleR}V_{, \triple}\big(D(S_0u_0 + Su^\infty)\big)\cdot\big(D_{\rho}v_0(\ell) + q_\beta(\ell+\rho) - q_\alpha(\ell)\big) \\
&=~ \sum_{\ell \in \Lambda} \sum_{\tripleR}SRV_{, \triple}\big(D(S_0u_0 + Su^\infty)\big)\cdot\big(D_{\rho}v_0(\ell) +q_\beta(\ell+\rho) - q_\beta(\ell)+ q_\beta(\ell)  - q_\alpha(\ell) \big) \\
&=~ \sum_{\ell \in \Lambda} \sum_{\tripleR}V_{, \triple}\big(RD(S_0u_0 + Su^\infty)\big)\cdot\big(R(D_{\rho}v_0(\ell) +q_\beta(\ell+\rho) - q_\beta(\ell)+ q_\beta(\ell)  - q_\alpha(\ell))\big) \\
&=~ \sum_{\ell \in \Lambda} \sum_{\tripleR}V_{, \triple}\big(e(\ell) + \tilde{D}u^\infty)\big)\cdot\big(RD_{\rho}v_0(\ell) + RD_\rho q_\beta(\ell)  + Rq_\beta(\ell)  - Rq_\alpha(\ell) )\big) \\
&=~ \sum_{\ell \in \Lambda} \sum_{\tripleR}V_{, \triple}\big(e(\ell) + \tilde{D}u^\infty)\big)\cdot\big(RD_{\rho}Sw_0(\ell)  + RD_{\rho}Sr_\beta(\ell)  +  r_\beta(\ell)  - r_\alpha(\ell) )\big) \\
&=~ \sum_{\ell \in \Lambda} \sum_{\tripleR}V_{, \triple}\big(e(\ell) + \tilde{D}u^\infty)\big)\cdot \tilde{D}\bm{w}(\ell)  \\
&=~  \<\delta \mathcal{E}(\bm{u}^\infty), \bm{w}\> = 0,
\end{align*}
where $w_0 = Rv_0$, $r_\gamma = Rq_\gamma$, and $w_\gamma = w_0 + r_\gamma$. As we have only reflected the branch cut, this problem is identical to the previous case in the sense that an estimate for $S\bm{u}^\infty$ now exists in the $x \geq \hat{x}_1$ plane (where there is no branch cut):
\[
\max\{|DSU^{\infty}(\ell)|, |S\bm{p}^\infty(\ell)|\} \lesssim~ |\ell|^{-1} + \|(1+|\ell-k|)^{-1}DS\bm{u}^\infty(k)\|_{\ell^2(\Lambda \cap B_{s_2}(\ell))}.
\]
As $R$ simply represents a translation operation by one Burger's vector, we then also have
\[
\max\{|RDSU^{\infty}(\ell)|, |RS\bm{p}^\infty(\ell)|\} \lesssim~ |\ell|^{-1} + \|(1+|\ell-k|)^{-1}RDS\bm{u}^\infty(k)\|_{\ell^2(\Lambda \cap B_{s_2}(\ell))},
\]
or
\[
\max\{|\tilde{D}U^{\infty}(\ell)|, |\bm{p}^\infty(\ell)|\} \lesssim~ |\ell|^{-1} + \|(1+|\ell-k|)^{-1}\tilde{D}\bm{u}^\infty(k)\|_{\ell^2(\Lambda \cap B_{s_2}(\ell))}.
\]

It is now immediate from the proof of~\cite[Lemma 6.7]{Ehrlacher2013} that
\begin{equation}\label{sub_optimal}
\max\{|\tilde{D}U^{\infty}(\ell)|, |\bm{p}^\infty(\ell)|\} \lesssim~ |\ell|^{-1}.
\end{equation}

\subsection{Proof of Theorem~\ref{decay_thm} }
Thus far, we have carefully set everything up so that we may use the techniques and analysis of~\cite{Ehrlacher2013} to complete the proof of Theorem~\ref{decay_thm}.  The primary idea is to obtain suboptimal estimates as in~\eqref{sub_optimal}, which translates into higher regularity of the residual in Theorem~\ref{lin_prob_thm}, which can then in turn be used to prove higher regularity of $\tilde{D}\bm{u}^\infty$.

We return to equation~\eqref{conv_var} and set
\[
\bm{v} =  \sum_m \begin{pmatrix}
D_\tau G_{00}(k-\ell)      & G_{0\bm{p}}(k-\ell)\\
D_\tau G_{\bm{p}0}(k-\ell) &G_{\bm{p}\bm{p}}(k-\ell)
\end{pmatrix}\hat{e}_m
\]
 to write
\begin{equation}\label{conv_diff2}
\begin{split}
&\begin{pmatrix}
D_\tau U^\infty(\ell) \\
\bm{p}^\infty(\ell)
\end{pmatrix} \\
&=
% \sum_{k \in \Lambda} \begin{pmatrix}
%D_\tau G_{00}(k-\ell)      &D_\tau G_{0\bm{p}}(k-\ell)\\
%G_{\bm{p}0}(k-\ell) &G_{\bm{p}\bm{p}}(k-\ell)
%\end{pmatrix}_m \cdot H(k)\begin{pmatrix}
%U^\infty(k) \\
%\bm{p}^\infty(k)
%\end{pmatrix} \hat{e}_m \\
\sum_{k \in \Lambda} \<\delta^2 V(\bm{0}) D\bm{u}^\infty, D\bm{v}\> +
\sum_{k \in \Lambda} \<\delta^2 V(\bm{0}) \tilde{D}\bm{u}^\infty, \tilde{D}\bm{v}\>  -\sum_{k \in \Lambda} \<\delta^2 V(\bm{0}) \tilde{D}\bm{u}^\infty, \tilde{D}\bm{v}\> \\
&= \sum_{k \in \Lambda} \Big(\sum_{\triple \in \mathcal{R}} \bar{f}_{\triple}(k)\cdot \tilde{D}_{\triple}\bm{v}(k) + \<\bm{g}(k), Dv_0(k)\> + \<\bm{k}(k),\bm{q}(k)\>\Big) \quad \mbox{by Theorem~\ref{lin_prob_thm}} \\
&\quad+  %\sum_{k \in \Lambda} \begin{pmatrix}
%D_\tau G_{00}(k-\ell)      &D_\tau G_{0\bm{p}}(k-\ell)\\
%G_{\bm{p}0}(k-\ell) &G_{\bm{p}\bm{p}}(k-\ell)
%\end{pmatrix}_m \cdot H(k)\begin{pmatrix}
%U^\infty(k) \\
%\bm{p}^\infty(k)
%\end{pmatrix} \hat{e}_m \\
\sum_{k \in \Lambda} \<\delta^2 V(\bm{0}) D\bm{u}^\infty, D\bm{v}\> - \sum_{k \in \Lambda} \<\delta^2 V(\bm{0}) \tilde{D}\bm{u}^\infty, \tilde{D}\bm{v}\>.
\end{split}
\end{equation}

It follows from Equation~\eqref{sub_optimal} that $|\bar{f}_{\triple}(k)| \lesssim (1+ |k|)^{-2}$ and from the decay estimates on $\mathcal{G}$ that
\begin{equation}\label{next_eq}
\begin{split}
\Big|\sum_{k \in \Lambda} \Big(\sum_{\triple \in \mathcal{R}} \bar{f}_{\triple}(k)\cdot \tilde{D}_{\triple}\bm{v}(k) + \<\bm{g}, Dv_0\> + \<\bm{k},\bm{q}\>\Big)\Big| \lesssim~& \sum_{k \in \Lambda} (1+ |k|)^{-2}(1+|\ell-k|)^{-2} \\\
\lesssim~& |\ell|^{-2}\log|\ell|.
\end{split}
\end{equation}
Next, we assume $\ell_1 < \hat{x}_1$ so that we have $\tilde{D} = D$ for $\ell$ large enough, which means
\begin{equation*}
\begin{split}
\sum_{k \in \Lambda} \<\delta^2 V(\bm{0}) D\bm{u}^\infty, D\bm{v}\> - \sum_{k \in \Lambda} \<\delta^2 V(\bm{0}) \tilde{D}\bm{u}^\infty, \tilde{D}\bm{v}\>
\end{split}
\end{equation*}
is nonzero for \textit{only} those $k \in \Lambda$ within $r_{\rm cut}$ of the branch cut, $\Gamma$.  If we let $U_\Gamma$ (as in~\cite{Ehrlacher2013}) denote the set of all such $k$, we have
\begin{equation}\label{sec_eq}
\begin{split}
&%\sum_{k \in \Lambda} \begin{pmatrix}
%D_\tau G_{00}(k-\ell)      &D_\tau G_{0\bm{p}}(k-\ell)\\
%G_{\bm{p}0}(k-\ell) &G_{\bm{p}\bm{p}}(k-\ell)
%\end{pmatrix}_m \cdot H(k)\begin{pmatrix}
%U^\infty(k) \\
%\bm{p}^\infty(k)
%\end{pmatrix} \hat{e}_m
\sum_{k \in \Lambda} \<\delta^2 V(\bm{0}) D\bm{u}^\infty, D\bm{v}\> - \sum_{k \in \Lambda} \<\delta^2 V(\bm{0}) \tilde{D}\bm{u}^\infty, \tilde{D}\bm{v}\> \\
%&=~ \sum_{k \in U_\Gamma} \begin{pmatrix}
%D_\tau G_{00}(k-\ell)      &D_\tau G_{0\bm{p}}(k-\ell)\\
%G_{\bm{p}0}(k-\ell) &G_{\bm{p}\bm{p}}(k-\ell)
%\end{pmatrix}_m \cdot H(k)\begin{pmatrix}
%U^\infty(k) \\
%\bm{p}^\infty(k)
%\end{pmatrix} \hat{e}_m - \sum_{k \in U_\Gamma} \<\delta^2 V(\bm{0}) \tilde{D}\bm{u}^\infty, \tilde{D}\bm{v}\> \\
&=~  \sum_{k \in U_\Gamma} \<\delta^2 V(\bm{0}) D\bm{u}^\infty, D\bm{v}\>- \sum_{k \in U_\Gamma} \<\delta^2 V(\bm{0}) \tilde{D}\bm{u}^\infty, \tilde{D}\bm{v}\>.
\end{split}
\end{equation}
In this case we can simply use the suboptimal bounds directly on $\bm{u}^\infty$ and the decay estimates for $\mathcal{G}$ to get 
\begin{equation}\label{third_eq}
\begin{split}
&%\sum_{k \in \Lambda} \begin{pmatrix}
%D_\tau G_{00}(k-\ell)      &D_\tau G_{0\bm{p}}(k-\ell)\\
%G_{\bm{p}0}(k-\ell) &G_{\bm{p}\bm{p}}(k-\ell)
%\end{pmatrix}_m \cdot H(k)\begin{pmatrix}
%U^\infty(k) \\
%\bm{p}^\infty(k)
%\end{pmatrix} \hat{e}_m - \sum_{k \in \Lambda} \<\delta^2 V(\bm{0}) \tilde{D}\bm{u}^\infty, \tilde{D}\bm{v}\> \\
\sum_{k \in \Lambda} \<\delta^2 V(\bm{0}) D\bm{u}^\infty, D\bm{v}\> - \sum_{k \in \Lambda} \<\delta^2 V(\bm{0}) \tilde{D}\bm{u}^\infty, \tilde{D}\bm{v}\> \\
&\lesssim~ \sum_{k \in U_\Gamma} (1+|k|)^{-1}(1 + |\ell-k|)^{-2}.
\end{split}
\end{equation}
As $U_\Gamma$ is simply a strip, this summation is now effectively one dimensional and can be bounded by noting $|\ell - k| \gtrsim |\ell| + |k|$ when $\ell_1 < \hat{x}_1$ and $k \in U_\Gamma$ so
\begin{equation}\label{fourth_eq}
\begin{split}
\sum_{k \in U_\Gamma} (1+|k|)^{-1}(1 + |\ell-k|)^{-2} \lesssim~& \int_{|\ell|}^\infty |\ell - k|^{-1}|k|^{-2}\, dk \\
 \lesssim~& \int_{|\ell|}^\infty \frac{1}{|\ell||k|^{1} + |k|^{2}}\, dk \lesssim~ |\ell|^{-2}\log|\ell|.
\end{split}
\end{equation}

In the case $\ell_1 > \hat{x}_1$, then we may simply perform another reflection argument by placing the branch cut in the left-half plane.  It therefore follows from~\eqref{fourth_eq} and~\eqref{next_eq} that in fact
\begin{equation}\label{optimal}
\max\{|\tilde{D}U^{\infty}(\ell)|, |\bm{p}^\infty(\ell)|\} \lesssim~ |\ell|^{-2}\log|\ell|,
\end{equation}
for \textit{all} large enough $|\ell|$.  This completes the proof of Theorem~\ref{decay_thm}.

As for the case for higher order derivatives (differences) alluded to in Remark~\ref{decay_remark}:
\[
\max\{|\tilde{D}^jU^{\infty}(\ell)|, |D^{j-1}\bm{p}^\infty(\ell)|\} \lesssim~ |\ell|^{-1-j}\log|\ell|,
\]
we simply give a high-level view of the argument as it is nearly identical to that given in~\cite[Proof of Theorem 3.6]{Ehrlacher2013} but with our usual modifications to extend to the multilattice case.  The principal idea is to again write
\begin{equation}\label{conv_diff3}
\begin{split}
\begin{pmatrix}
D_\tau U^\infty(\ell) \\
\bm{p}^\infty(\ell)
\end{pmatrix}
=~&
 \sum_m\< H(k)\begin{pmatrix}
U^\infty(k) \\
\bm{p}^\infty(k)
\end{pmatrix} , \begin{pmatrix}
D_\tau G_{00}(k-\ell)      & G_{0\bm{p}}(k-\ell)\\
D_\tau G_{\bm{p}0}(k-\ell) &G_{\bm{p}\bm{p}}(k-\ell)
\end{pmatrix}\hat{e}_m \>
\end{split}
\end{equation}
and take higher finite differences in succession. At each stage, the estimates on the residual can be improved by taking into account the decay proven in the lower order finite differences, and thus improved estimates can be obtained rigorously via an induction argument.

\appendix

\section{Supplementary Proofs}

\subsection{Equivalence of Cauchy-Born Equations}\label{app:cb_linear}

Here we show that ``standard'' Cauchy--Born model is equivalent to the mixed formulation used in the proof of Lemma~\ref{at_force}.  Namely, we wish to show that if $(U,\bm{p})$ solves
\begin{align}
\nabla \cdot (\mathbb{C} \nabla U) =~& 0 \label{stand1}\\
\partial^2_{\bm{p}\bm{p}} W( {\sf 0}, {\bm 0}) \bm{p}^{\rm lin} =~& -\partial^2_{\bm{p}\mF} W({\sf 0}, {\bm 0}) \nabla U^{\rm lin} \label{stand2} 
% \frac{\partial^2}{\partial \bm{p} \partial \bm{p} } \big[V\Big(\big(\bm{0}\big)_{\triple \in \mathcal{R}}\Big)\big]\bm{p}^{\rm lin} =~& -\partial_{\bm{p}\mF} \big[V\Big(\big(\bm{0} \big)_{\triple \in \mathcal{R}}\Big)\big] \nabla U^{\rm lin} \label{stand2}
\end{align}
then $(U,\bm{p})$ also solves the mixed variational equation
\begin{align*}
%&\<\delta^2 \mathcal{E}^c(\bm{0})(U,\bm{p}),(V,\bm{q}) \> \\
&\<\partial_{\mF \mF}W(0, \bm{0})\nabla U, \nabla V \> + \sum_{\nu} \< \partial_{\mF p_\nu}W(0, \bm{0}) \nabla U, q_\nu \> + \sum_{\mu}\< \partial_{ p_\mu \mF}W(0, \bm{0}) p_\mu, \nabla V \> \\
&\qquad+ \sum_{\mu,\nu}\< \partial_{p_\mu p_\nu}W(0, \bm{0}) p_\mu, q_\nu \> = 0, \quad \forall (W,\bm{q}) \in {\rm C}^\infty_0.
\end{align*}
and vice versa. The main connection between these two lies in the identity~\cite{olsonOrtner2016}
\begin{align*}
\mathbb{C} = \partial_{\mF\mF}^2 W(0, \bm{0}) - \partial^2_{\mF \bm{p}} W(0, \bm{0}) [\partial^2_{\bm{p}\bm{p}}W(0, \bm{0})]^{-1} \partial^2_{\bm{p}\mF} W(0, \bm{0}).
\end{align*}
Thus to go from the mixed form to the standard form, we simple take a test function pair with $\nabla V = 0$ which allows us to obtain the equation for $\bm{p}^{\rm lin}$.  We may then choose a test pair with $\bm{q} = 0$, integrate by parts and then use the above identity to obtain $\nabla \cdot (\mathbb{C} \nabla U) = 0$.  In the opposite direction, we simply multiply~\eqref{stand1} by a test function $V$, integrate by parts and substitute~\eqref{stand2}.  We then multiply~\eqref{stand2} by a test function $\bm{q}$ and the results of the preceding to computations to yield the mixed form.

\subsection{Proof of Identity~\eqref{slip_fd_id} }\label{tedious}
Here we establish the identity~\eqref{slip_fd_id}:
\begin{equation*}
\tilde{D}_{\triple} \bm{u}(\ell) = \tilde{D}_\rho u_0(\ell) + \tilde{D}_\rho p_\beta(\ell) + p_\beta(\ell) - p_\alpha(\ell),
\end{equation*}
by considering the cases {\bfseries{(1)}} $\ell \notin \Omega_\Gamma$ and {\bfseries{(2)}} $\ell \in \Omega_\Gamma$.  For the former, we have
\begin{align*}
\tilde{D}_{\triple} \bm{u}(\ell) =~& D_{\triple}\bm{u}(\ell), \qquad \ell \notin \Omega_\Gamma, \\
=~& D_\rho u_0(\ell) + D_\rho p_\beta(\ell) + p_\beta(\ell) - p_\alpha(\ell) \\
=~& \tilde{D}_\rho u_0(\ell) + \tilde{D}_\rho p_\beta(\ell) + p_\beta(\ell) - p_\alpha(\ell), \qquad \mbox{since $\ell \notin \Omega_\gamma$.}
\end{align*}
For the latter case $\ell \in \Omega_\Gamma$, we have
{\footnotesize
\begin{align*}
&\tilde{D}_{\triple} \bm{u}(\ell) =~ RD_{\triple}S\bm{u}(\ell), \qquad \ell \in \Omega_\Gamma, \\
&=~ \begin{cases}
&D_{\triple}S\bm{u}(\ell), \, \ell_2 > \hat{x}_2 \\
&D_{\triple}S\bm{u}(\ell+b_{12}), \, \ell_2 < \hat{x}_2
\end{cases} \\
&=~ \begin{cases}
&(S\bm{u})_\beta(\ell+\rho)-(S\bm{u})_\alpha(\ell), \, \ell_2 > \hat{x}_2 \\
&(S\bm{u})_\beta(\ell+b_{12}+\rho)-(S\bm{u})_\alpha(\ell+b_{12}), \, \ell_2 < \hat{x}_2
\end{cases} \\
&=~ \begin{cases}
&(S\bm{u})_\beta(\ell+\rho)-u_\alpha(\ell), \, \ell_2 > \hat{x}_2 \\
&(S\bm{u})_\beta(\ell+b_{12}+\rho)-u_\alpha(\ell), \, \ell_2 < \hat{x}_2
\end{cases} \\
&=~ \begin{cases}
&u_\beta(\ell+\rho)-u_\alpha(\ell), \, \ell_2 > \hat{x}_2, \ell_2+\rho_2 > \hat{x}_2 \\
&u_\beta(\ell+\rho-b_{12})-u_\alpha(\ell), \, \ell_2 > \hat{x}_2, \ell_2+\rho_2 < \hat{x}_2 \\
&u_\beta(\ell+b_{12}+\rho)-u_\alpha(\ell), \, \ell_2 < \hat{x}_2, \ell_2+\rho_2 > \hat{x}_2 \\
&u_\beta(\ell+\rho)-u_\alpha(\ell), \, \ell_2 < \hat{x}_2, \ell_2+\rho_2 < \hat{x}_2
\end{cases} \\
&=~ \begin{cases}
&u_0(\ell+\rho) -u_0(\ell) + p_\beta(\ell+\rho)-p_\beta(\ell) + p_\beta(\ell) - p_\alpha(\ell), \, \ell_2 > \hat{x}_2, \ell_2+\rho_2 > \hat{x}_2 \\
&u_0(\ell+\rho-b_{12})-u_0(\ell)+p_\beta(\ell+\rho-b_{12})-p_\beta(\ell)+p_\beta(\ell)-p_\alpha(\ell), \, \ell_2 > \hat{x}_2, \ell_2+\rho_2 < \hat{x}_2 \\
&u_0(\ell+b_{12}+\rho)-u_0(\ell) + p_\beta(\ell+b_{12}+\rho) - p_\beta(\ell) + p_\beta(\ell) - p_\alpha(\ell), \, \ell_2 < \hat{x}_2, \ell_2+\rho_2 > \hat{x}_2 \\
&u_0(\ell+\rho)-u_0(\ell) + p_\beta(\ell+\rho) - p_\beta(\ell) + p_\beta(\ell) -p_\alpha(\ell), \, \ell_2 < \hat{x}_2, \ell_2+\rho_2 < \hat{x}_2
\end{cases} \\
&=~ \begin{cases}
&(S\bm{u})_0(\ell+\rho)-(S\bm{u})_0(\ell) + (S\bm{p})_\beta(\ell+\rho)-(S\bm{p})_\beta(\ell) + p_\beta(\ell) - p_\alpha(\ell), \, \ell_2 > \hat{x}_2, \ell_2+\rho_2 > \hat{x}_2 \\
&(S\bm{u})_0(\ell+\rho)-(S\bm{u})_0(\ell)+(S\bm{p})_\beta(\ell+\rho)-(S\bm{p})_\beta(\ell)+p_\beta(\ell)-p_\alpha(\ell), \, \ell_2 > \hat{x}_2, \ell_2+\rho_2 < \hat{x}_2 \\
&(S\bm{u})_0(\ell+b_{12}+\rho)-(S\bm{u})_0(\ell+b_{12}) + (S\bm{p})_\beta(\ell+b_{12}+\rho) - (S\bm{p})_\beta(\ell+b_{12}) + p_\beta(\ell) - p_\alpha(\ell), \, \ell_2 < \hat{x}_2, \ell_2+\rho_2 > \hat{x}_2 \\
&(S\bm{u})_0(\ell+b_{12}+\rho)-(S\bm{u})_0(\ell+b_{12}) + (S\bm{p})_\beta(\ell+b_{12}+\rho) - (S\bm{p})_\beta(\ell+b_{12}) + p_\beta(\ell) - p_\alpha(\ell), \, \ell_2 < \hat{x}_2, \ell_2+\rho_2 < \hat{x}_2
\end{cases} \\
&=~ \begin{cases}
&D_\rho (S\bm{u})_0(\ell)+D_\rho (S\bm{p})_\beta(\ell) + p_\beta(\ell) - p_\alpha(\ell), \, \ell_2 > \hat{x}_2, \\
&D_\rho (S\bm{u})_0(\ell+b_{12})+D_\rho (S\bm{p})_\beta(\ell+b_{12}) + p_\beta(\ell) - p_\alpha(\ell), \, \ell_2 < \hat{x}_2,
\end{cases} \\
&=~ RD_\rho (S\bm{u})_0(\ell)+RD_\rho (S\bm{p})_\beta(\ell) + p_\beta(\ell) - p_\alpha(\ell) = \tilde{D}_\rho u_0(\ell) + \tilde{D}_\rho p_\beta(\ell) + p_\beta(\ell) - p_\alpha(\ell).
\end{align*}
}
Analogously, by replacing $S$ with $S_0$ in the derivation above, it is straightforward to see that
\[
\tilde{D}_{\triple} \bm{u}^0(\ell) =~ \tilde{D}_\rho U^0(\ell) + \tilde{D}_\rho p^0_\beta(\ell) + p^0_\beta(\ell) - p^0_\alpha(\ell).
\]

\bibliographystyle{plainurl}	% (uses file "plain.bst")
\bibliography{myrefs}

\end{document}